\numberwithin{equation}{section}
\newtheorem{definition}{Definition}[section]
\newtheorem{remark}[definition]{Remark}
\newtheorem{example}[definition]{Example}
\newtheorem{proposition}[definition]{Proposition}
\newtheorem{lemma}[definition]{Lemma}
\newtheorem{corollary}[definition]{Corollary}
\theoremstyle{remark}
\newcommand{\Ext}{\mathrm{Ext}}
\newcommand{\Tor}{\mathrm{Tor}}
\newcommand{\Hom}{\mathrm{Hom}}
\newcommand{\A}{\mathcal{A}}
\newcommand{\B}{\mathcal{B}}
\newcommand{\M}{\mathcal{M}}
\newcommand{\He}{\mathcal{H}}
\newcommand{\X}{\mathcal{X}}
\newcommand{\Y}{\mathcal{Y}}
\newcommand{\Z}{\mathcal{Z}}
\newcommand{\pd}{\mathrm{pd}}
\newcommand{\op}{\mathrm{op}}
\newcommand{\Proj}{\mathcal{P}}
\newcommand{\Inj}{\mathcal{I}}
\newcommand{\id}{\mathrm{id}}
\newcommand{\resdim}{\mathrm{resdim}}
\newcommand{\coresdim}{\mathrm{coresdim}}
\newcommand{\FP}{\mathsf{FP}}
\newcommand{\Modu}{\mathsf{Mod}}
\newcommand{\Ker}{\mathrm{Ker}}
\newcommand{\Flat}{\mathcal{F}}
\newcommand{\GF}{\mathcal{GF}}
\newcommand{\GI}{\mathcal{GI}}
\newcommand{\N}{\mathcal{N}}
\newcommand{\Coker}{\mathrm{CoKer}}
\newcommand{\ortogonal}{\bot}
\begin{document}
\title[Gorensteinness from duality pairs induced via Foxby equivalences]{Gorensteinness from duality pairs \\ induced via Foxby equivalences}
%\thanks{}

\author{V\'ictor Becerril}
\address[V. Becerril]{Centro de Ciencias Matem\'aticas. Universidad Nacional Aut\'onoma de M\'exico. CP58089. Morelia, M\'exico}
\email{victorbecerril@matmor.unam.mx}

\author{Marco A. P\'erez}
\address[M. A. P\'erez]{Instituto de Matem\'atica y Estad\'istica ``Prof. Ing. Rafael Laguardia''. Facultad de Ingenier\'ia. Universidad de la Rep\'ublica. CP11300 Montevideo, Uruguay}
\email{mperez@fing.edu.uy}
%\date{}
\thanks{2020 {\it{Mathematics Subject Classification}}. Primary 18G25. Secondary 18G10, 18G20, 16E10.}
\thanks{Key Words: Duality pairs, Foxby equivalences, semidualizing bimodules, Auslander modules, Bass modules, relative Gorenstein flat modules, relative Gorenstein injective modules}

%%%%%%%%%%%%%%%%%%%%%%%%%%%%%%%%%%%%%%%%
%%%%%%%%%%%%%%%%%%%%%%%%%%%%%%%%%%%%%%%%

\begin{abstract} 
We define and study induced duality pairs under Foxby equivalences. Given a semidualizing $(S,R)$-bimodule ${}_S C_R$, if 
\[
(\mathcal{A}_C(R),\mathcal{B}_C(R^{\rm op})) \ \ \text{ and } \ \ (\mathcal{A}_C(S^{\rm op}),\mathcal{B}_C(S))
\]
denote the duality pairs formed by the corresponding classes of Auslander and Bass modules, and if $(\mathcal{M,N})$ is a duality pair over $R$, we study the duality pair formed by the essential images of the restricted Foxby equivalences
\[
(C \otimes_R \sim)|_{\mathcal{A}_C(R) \cap \mathcal{M}} \ \ \text{ and } \ \ \Hom_{R^{\rm op}}(C,\sim) |_{\mathcal{B}_C(R^{\rm op}) \cap \mathcal{N}},
\] 
denoted by $\mathcal{M}^C(S)$ and $\mathcal{N}^C(S^{\rm op})$. We investigate which additional properties of the duality pair $(\mathcal{M,N})$ are transferred to $(\mathcal{M}^C(S),\mathcal{N}^C(S^{\rm op}))$. We also study several versions of Gorenstein injective and Gorenstein flat modules relative to the pairs $(\mathcal{A}_C(R) \cap \mathcal{M},\mathcal{B}_C(R^{\rm op}) \cap \mathcal{N})$ and $(\mathcal{M}^C(S),\mathcal{N}^C(S^{\rm op}))$. For instance, we explore the relation between these classes of modules under Foxby equivalences and under Pontryagin duality.  
\end{abstract}  
\maketitle
%\centerline{}

%%%%%%%%%%%%%%%%%%%%%%%%%%%%%%%%%%%%
%%%%%%%%%%%%%%%%%%%%%%%%%%%%%%%%%%%%
%%%%%%%%%%%%%%%%%%%%%%%%%%%%%%%%%%%%
%%%%%%%%%%%%%%%%%%%%%%%%%%%%%%%%%%%%

\setcounter{tocdepth}{1}
\tableofcontents

%%%%%%%%%%%%%%%%%%%%%%%%%%%%%%%%%%%%
%%%%%%%%%%%%%%%%%%%%%%%%%%%%%%%%%%%%
%%%%%%%%%%%%%%%%%%%%%%%%%%%%%%%%%%%%
%%%%%%%%%%%%%%%%%%%%%%%%%%%%%%%%%%%%

\section*{Introduction} 

Duality pairs were introduced in 2009 by Holm and J{\o}rgensen in \cite{HJ09}. Under certain conditions, these pairs are a source of left and right approximations with respect to the classes forming the pairs. For instance, given a duality pair $(\mathcal{M,N})$ over a ring $R$ (associative with identity) with $\mathcal{M}$ closed under (co)products, then every (left) $R$-module has an $\mathcal{M}$-preenvelope (resp., a $\mathcal{M}$-cover). Duality pairs are then important in relative homological algebra. On the other hand, the main example of duality pair considered in this article is the one formed by the classes of Auslander and Bass modules associated to a semidualizing $(S,R)$-bimodule ${}_S C_R$. These modules were introduced in 1973 by Foxby \cite{Foxby73}. If $\mathcal{A}_C(R)$ and $\mathcal{B}_C(S)$ denote the classes of Auslander and Bass modules over $R$ and $S$, then there is an equivalence of categories, known as the first Foxby equivalence, given by the functor $C \otimes_R \sim \colon \mathcal{A}_C(R) \to \mathcal{B}_C(S)$ whose quasi-inverse is given by $\Hom_{S}(C,\sim)$. Similarly, one has the second Foxby equivalence $- \otimes_S C \colon \mathcal{A}_C(S^{\rm op}) \to \mathcal{B}_C(R^{\rm op})$ whose quasi-inverse is given by $\Hom_{R^{\rm op}}(C,\sim)$. Besides these equivalences, another relation between Auslander and Bass modules is given by the Pontryagin duality. Specifically, it was proved recently by Huang \cite{Huang21} that $(\mathcal{A}_C(R),\mathcal{B}_C(R^{\rm op}))$ and $(\mathcal{A}_C(S^{\rm op}),\mathcal{B}_C(S))$ are complete duality pairs over $R$ and $S^{\rm op}$, respectively. 

The first main goal of the present article is to induce duality pairs over $S$ from a duality $(\mathcal{M,N})$ over $R$ via Foxby equivalences. Motivated by the work of Wu and Gao \cite{WuGao22} on $\text{FP}_n$-injective and $\text{FP}_n$-flat modules relative to a semidualizing bimodule, the idea is to consider the intersection pair $(\mathcal{A}_C(R) \cap \mathcal{M}, \mathcal{B}_C(R^{\rm op}) \cap \mathcal{N})$, also a duality pair, and to define the pair formed by the essential images of the restricted equivalences $(C \otimes_R \sim)|_{\mathcal{A}_C(R) \cap \mathcal{M}}$ and $\Hom_{R^{\rm op}}(C,\sim) |_{\mathcal{B}_C(R^{\rm op}) \cap \mathcal{N}}$, denoted by $\mathcal{M}^C(S)$ and $\mathcal{N}^C(S^{\rm op})$. It turns out that $(\mathcal{M}^C(S),\mathcal{N}^C(S^{\rm op}))$ is a duality pair over $S$, and that many of the additional properties that the duality pair $(\mathcal{M,N})$ may have are inherited by $(\mathcal{M}^C(S),\mathcal{N}^C(S^{\rm op}))$. For instance, we show that if $(\mathcal{M,N})$ is symmetric or perfect, then so is  $(\mathcal{M}^C(S),\mathcal{N}^C(S^{\rm op}))$. 

Our second main goal has to do with Gorenstein homological algebra relative to the pairs $(\mathcal{A}_C(R) \cap \mathcal{M}, \mathcal{B}_C(R^{\rm op}) \cap \mathcal{N})$ and $(\mathcal{M}^C(S),\mathcal{N}^C(S^{\rm op}))$. We study Gorenstein injective and Gorenstein flat modules relative to these pairs, and show several relations between these modules under Foxby equivalences and Pontryagin duality, and exhibit some homotopical aspects regarding model structures and their homotopy categories. These versions of relative Gorenstein modules are based on recent previous works of Wang, Yang and Zhu \cite{WangYangZhu19}, and of the first and second authors \cite{BMS,BecerrilPerez25}.  

In general, our findings cover several results of Wu and Gao's \cite{WuGao22} and of Cheng and Zhao's \cite{ChengZhao24}. This article provides a general theory of induced duality pairs in Foxby equivalences and  Gorenstein homological algebra relative to these pairs, generalizing some aspects of the theory of $C$-$\text{FP}_n$-flat and $C$-$\text{FP}_n$-injective modules developed in \cite{WuGao22}, and of the theory of Gorenstein $G_C$-$\text{FP}_n$-injective modules from \cite{ChengZhao24}.

%%%%%%%%%%%%%%%%%%%%%%%%%%%%%%%%%%%%
%%%%%%%%%%%%%%%%%%%%%%%%%%%%%%%%%%%%

\subsection*{Organization of this article} 

In Section \ref{sec:prelims} we recall the basic terminology, preliminary results and examples necessary for the rest of the article. 

Section \ref{sec:induced} is devoted to study the properties of the pair $(\mathcal{M}^C(S),\mathcal{N}^C(S^{\rm op}))$. We first provide in Lemma \ref{Lema1} characterizations of $\mathcal{M}^C(S)$ and $\mathcal{N}^C(S^{\rm op})$, which allow us to note in Proposition \ref{Equivalencia1} that there is an equivalence of categories between $\mathcal{A}_C(R) \cap \mathcal{M}$ and $\mathcal{M}^C(S)$, and between $\mathcal{B}_C(R^{\rm op}) \cap \mathcal{N}$ and $\mathcal{N}^C(S^{\rm op})$. In Proposition \ref{Dualidad1} we show that if $(\mathcal{M,N})$ is a (symmetric) duality pair, then so is $(\mathcal{M}^C(S),\mathcal{N}^C(S^{\rm op}))$. The rest of the properties that can be transferred from $(\mathcal{M,N})$ to $(\mathcal{M}^C(S),\mathcal{N}^C(S^{\rm op}))$ are described in Proposition \ref{Prop0}. We point out in particular that if $(\M,\N)$ is complete and $S \in \M^C(S)$, then so is $(\M^C(S),\N^C(S^{\rm op}))$. For the rest of Section \ref{sec:induced}, relative homological dimensions are considered with respect to the classes $\mathcal{M}^C(S)$ and $\mathcal{N}^C(S^{\rm op})$. In the case where ${}_S C_R$ is faithfully semidualizing, we extend the equivalence of categories in Proposition \ref{Equivalencia1} to the subcategories of modules with bounded $\mathcal{M}^C(S)$-resolution dimension and bounded $\mathcal{N}^C(S^{\rm op})$-coresolution dimension (see Proposition \ref{extended-equivalence}).  

Section \ref{sec:relGFGI} is formed by results on Gorenstein homological and homotopical algebra relative to $(\mathcal{A}_C(R) \cap \mathcal{M}, \mathcal{B}_C(R^{\rm op}) \cap \mathcal{N})$ and $(\mathcal{M}^C(S),\mathcal{N}^C(S^{\rm op}))$. Cores of these pairs will be key in this section, namely, $\mathcal{H}(\mathfrak{M}) = \mathfrak{M} \cap \mathfrak{M}^\perp$ and $\mathcal{H}(\mathfrak{N}) = {}^\perp\mathfrak{N} \cap \mathfrak{N}$, where $\mathfrak{M} = \mathcal{A}_C(R) \cap \mathcal{M}$ and $\mathfrak{N} = \mathcal{B}_C(R^{\rm op}) \cap \mathcal{N}$. We also consider the essential images of the equivalences $(C \otimes_R \sim)|_{\mathcal{H}(\mathfrak{M})}$ and $\Hom_{R^{\rm op}}(C,\sim) |_{\mathcal{H}(\mathfrak{N})}$, denoted by $\mathcal{H}^C(\mathfrak{M})$ and $\mathcal{H}_C(\mathfrak{N})$, respectively. We start Section \ref{sec:relGFGI} studying $\mathcal{N}^C(S^{\rm op})$-Gorenstein injective $S^{\rm op}$-modules and  $\mathfrak{N}$-Gorenstein injective $R^{\rm op}$-modules. Concretely, Proposition \ref{duality_Ginj} and Corollary \ref{Gorenstein-injective-equivalence} show how these relative Gorenstein injective modules behave under Foxby equivalences. The counterparts of these modules are given by the $(\M^C(S), \N^C(S^{\rm op}))$-Gorenstein flat $S$-modules and the $(\mathfrak{M,N})$-Gorenstein flat $R$-modules. Proposition \ref{GF-Equiv} and Corollary \ref{Gorenstein-flat-equivalence} show the relation between these modules under Foxby equivalences. Moreover, the Pontryagin duality relation between these classes of relative Gorenstein flat and Gorenstein injective modules is provided in Proposition \ref{Dual-P}. We follow a similar approach for the $(\mathcal{H}_C(\mathfrak{N}),\mathcal{N}^C(S^{\rm op}))$-Gorenstein injective $S^{\rm op}$-modules, the $(\mathcal{H}(\mathfrak{N}),\mathfrak{N})$-Gorenstein injective $R^{\rm op}$-modules, the $(\mathcal{M}^C(S),\mathcal{H}_C(\mathfrak{N}))$-Gorenstein flat $S$-modules and the $(\mathfrak{M},\mathcal{H}(\mathfrak{N}))$-Gorenstein flat $R$-modules (see Propositions \ref{prop:Foxby-corazon} and \ref{prop:dualityR}, \ref{otra_dualidad}). We shall also study in the last part of Section \ref{sec:relGFGI} ``weak'' versions of these relative Gorenstein injective and Gorenstein flat modules, and relate them to their previous (``strong'') analogs. Finally, regarding homotopical aspects, we comment in Corollary \ref{coro:first_model_structure} and Proposition \ref{GFmodelST} on sufficient conditions for the existence of two abelian model structures, and their homotopy categories, where $\GF_{(\M^C(S),\N^C(S^{\rm op}))}$ (resp., $\M^C(S)$) and $\GF_{(\mathfrak{M},\mathcal{H}(\mathfrak{N}))}$ (resp., $\mathfrak{M}$) are the classes of (trivially) cofibrant objects.

%%%%%%%%%%%%%%%%%%%%%%%%%%%%%%%%%%%%
%%%%%%%%%%%%%%%%%%%%%%%%%%%%%%%%%%%%
%%%%%%%%%%%%%%%%%%%%%%%%%%%%%%%%%%%%
%%%%%%%%%%%%%%%%%%%%%%%%%%%%%%%%%%%%

\section{Preliminaries} \label{sec:prelims}

%%%%%%%%%%%%%%%%%%%%%%%%%%%%%%%%%%%%
%%%%%%%%%%%%%%%%%%%%%%%%%%%%%%%%%%%%

\subsection*{Notation for modules and chain complexes}

In what follows, we mainly work within the category $\Modu(R)$ of (unitary) (left) $R$-modules, where $R$ is an associative ring with identity. Right $R$-modules are modules over the opposite ring $R^{\rm op}$. In some cases, in order to indicate whether $M \in \Modu(R)$ or $M \in \Modu(R^{\rm op})$, we write ${}_R M$ or $M_R$, respectively. As special classes of objects in $\Modu(R)$, we denote by $\Proj(R)$, $\Inj(R)$ and $\Flat(R)$ the classes of projective, injective and flat $R$-modules, respectively.

%%%%%%%%%%%%%%%%%%%%%%%%%%%%%%%%%%%%
%%%%%%%%%%%%%%%%%%%%%%%%%%%%%%%%%%%%

\subsection*{Orthogonality}

Concerning functors defined on modules, $\Ext^i_R(-,\sim)$ denotes the right $i$-th derived functor of
\[
\Hom_R(-,\sim) \colon [\Modu(R)]^{\rm op} \times \Modu(R) \longrightarrow \Modu(\mathbb{Z}).
\]
On the other hand, $\Tor^R_i(-,\sim)$ denotes the left derived functor of 
\[
- \otimes_R \sim \colon \Modu(R^{\rm op}) \times \Modu(R) \longrightarrow \Modu(\mathbb{Z}).
\]  
Let $\mathcal{X} \subseteq \Modu(R)$, $N \in \Modu(R)$ and $i \in \mathbb{Z}_{>0}$. The expression $\Ext^i_R(\mathcal{X},N) = 0$ means that $\Ext^i_R(X,N) = 0$ for every $X \in \mathcal{X}$. Moreover, $\Ext^i_R(\mathcal{X,Y}) = 0$ if $\Ext^i_R(\mathcal{X},Y) = 0$ for every $Y \in \mathcal{Y}$. The expression $\Ext^i_R(N,\mathcal{Y}) = 0$ has a similar meaning. Moreover, by $\Ext^{\geq 1}_R(M,N) = 0$ we mean that $\Ext^i_R(M,N) = 0$ for every $i \geq 1$. One also has similar meanings for $\Ext^{\geq 1}_R(\mathcal{X},N) = 0$, $\Ext^{\geq 1}_R(N,\mathcal{Y}) = 0$ and $\Ext^{\geq 1}_R(\mathcal{X,Y}) = 0$. We can also replace $\Ext$ by $\Tor$ in order to obtain the corresponding notations for $\Tor$-orthogonality. The \emph{$i$-th and total left Ext-orthogonal and Tor-orthogonal complements} of $\mathcal{X}$ will be denoted by
\[
{}^{\perp_i}\mathcal{X} := \{ M \in \Modu(R) {\rm \ : \ } \Ext^i_{R}(M,\X) = 0 \} \ \ \ \text{and} \ \ \ {}^{\perp}\X := \bigcap_{i \geq 1} {}^{\perp_i}\X,
\]
\[
{}^{\top_i}\mathcal{X} := \{ N \in \Modu(R^{\rm op}) {\rm \ : \ } \Tor^{R}_i(N,\X) = 0 \} \ \ \ \text{and} \ \ \ {}^\top\X := \bigcap_{i \geq 1} {}^{\top_i}\mathcal{X}.
\]
The \emph{$i$-th and total right Ext-orthogonal and Tor-orthogonal complements} are defined similarly.

%%%%%%%%%%%%%%%%%%%%%%%%%%%%%%%%%%%%
%%%%%%%%%%%%%%%%%%%%%%%%%%%%%%%%%%%%

\subsection*{Relative homological dimensions} 

Let $M \in  \Modu (R)$ and $\mathcal{X}, \mathcal{Y} \subseteq  \Modu(R)$. The \emph{injective dimensions of $M$ and $\mathcal{Y}$ relative to $\mathcal{X}$} are defined by
\begin{align*}
\id_{\mathcal{X}}(M) & := \inf \{ m \in \mathbb{Z}_{\geq 0} \text{ : } \Ext _R ^{\geq m+1}(\mathcal{X},M) = 0  \}, \\ \id_{\mathcal{X}}(\mathcal{Y}) & := \sup \{ \id_{\mathcal{X}}(Y) \text{ : } Y \in \mathcal{Y} \}.
\end{align*}
If $\mathcal{X} =  \Modu(R)$, we write $\id_{ \Modu(R)}(M) = \id(M)$ and $\id_{\mathsf{Mod}(R)}(\mathcal{Y}) = \id(\mathcal{Y})$ for the (absolute) injective dimensions of $M$ and $\mathcal{Y}$.  Dually we can define the relative and absolute projective dimensions $\pd_{\X} (M)$, $\pd_{\X}(\Y)$, $\pd(M)$ and $\pd(\Y)$.

By an \emph{$\mathcal{X}$-resolution of $M$} we mean an exact complex 
\[
\cdots \to X_m \to X_{m-1} \to \cdots \to X_1 \to X_0 \to M \to 0
\]
with $X_k \in \mathcal{X}$ for every $k \in \mathbb{Z}_{\geq 0}$.  If $X_k = 0$ for $k > m$ and $X_k \neq 0$ for every $0 \leq k \leq m$, we say that the previous resolution has \emph{length} $m$. The \emph{resolution dimension relative to $\mathcal{X}$} (or the \emph{$\mathcal{X}$-resolution dimension}) of $M$ is defined as the value
\[
\resdim_{\mathcal{X}}(M) := \min \{ m \in \mathbb{Z}_{\geq 0} \ \mbox{ : } \ \text{there exists an $\mathcal{X}$-resolution of $M$ of length $m$} \}.
\]
If $\mathcal{X}$ is \emph{pointed} (that is, $0 \in \mathcal{X}$) and closed under isomorphisms, we set $\resdim_{\mathcal{X}}(M) = 0$ whenever $M \simeq 0$. Moreover, if $\mathcal{Y} \subseteq  \Modu (R)$ then
\[
\resdim_{\mathcal{X}}(\mathcal{Y}) := \sup \{ \resdim_{\mathcal{X}}(Y) \ \mbox{ : } \ \text{$Y \in \mathcal{Y}$} \}
\]
defines the \emph{resolution dimension of $\mathcal{Y}$ relative to $\mathcal{X}$}. The classes of objects with bounded (by some $n \geq 0$) and finite $\mathcal{X}$-resolution dimensions will be denoted by
\[
\mathcal{X}^\wedge_n := \{ M \in  \Modu(R) \text{ : } \resdim_{\mathcal{X}}(M) \leq n \} \ \ \ \text{and} \ \ \ \mathcal{X}^\wedge := \bigcup_{n \geq 0} \mathcal{X}^\wedge_n.
\]
Dually, we can define \emph{$\mathcal{X}$-coresolutions} and the \emph{coresolution dimension of $M$ and $\mathcal{Y}$ relative to $\mathcal{X}$} (denoted $\coresdim_{\mathcal{X}}(M)$ and $\coresdim_{\mathcal{X}}(\mathcal{Y})$). We also have the dual notations $\mathcal{X}^\vee_n$ and $\mathcal{X}^\vee$ for the classes of $R$-modules with bounded and finite $\mathcal{X}$-coresolution dimension.

%%%%%%%%%%%%%%%%%%%%%%%%%%%%%%%%%%%%
%%%%%%%%%%%%%%%%%%%%%%%%%%%%%%%%%%%%

\subsection*{Approximations}

Given a class $\mathcal{X}$ of $R$-modules and $M \in \Modu(R)$, recall that a morphism $\varphi \colon X \to M$ with $X \in \mathcal{X}$ is an \emph{$\mathcal{X}$-precover of $M$} if for every morphism $\varphi' \colon X' \to M$ with $X' \in \mathcal{X}$, there exists a morphism $h \colon X' \to X$ such that $\varphi' = \varphi \circ h$. An $\X$-precover $\varphi$ of $M$ is:
\begin{itemize}
\item an \emph{$\X$-cover} if in the case $X' = X$ and $\varphi' = \varphi$, the equation $\varphi = \varphi \circ h$ can only be completed by automorphisms $h \colon X \to X$ of $X$;
\item a \emph{special $\X$-cover} if $\Coker (\varphi) = 0$ and $\Ker (\varphi) \in \X^{\ortogonal _1}$.
\end{itemize}

 A class $\mathcal{X} \subseteq \Modu(R)$ is \emph{precovering} if every $R$-module has an $\mathcal{X}$-precover. \emph{Covering} and \emph{special precovering} classes are defined similarly. Dually, one has the notions of \emph{envelopes}, \emph{preenvelopes} and \emph{special preenvelopes} of a module relative to $\X$, and of (\emph{special}) \emph{preenveloping} and \emph{enveloping} classes.

%%%%%%%%%%%%%%%%%%%%%%%%%%%%%%%%%%%%
%%%%%%%%%%%%%%%%%%%%%%%%%%%%%%%%%%%%

\subsection*{Cotorsion pairs}

Two classes $\mathcal{X,Y} \subseteq \Modu (R)$ of $R$-modules form a \emph{cotorsion pair} $(\mathcal{X,Y})$ if $\mathcal{X} = {}^{\perp_1}\mathcal{Y}$ and $\mathcal{Y} = \mathcal{X}^{\perp_1}$. A cotorsion pair $(\mathcal{X,Y})$ is:
\begin{itemize}
\item \emph{complete} if $\mathcal{X}$ is special precovering (or equivalently, if $\mathcal{Y}$ is special preenveloping); 

\item \emph{hereditary} if $\Ext^{\geq 1}_R(\mathcal{X,Y}) = 0$ (or equivalently, if $\mathcal{X}$ is resolving or $\mathcal{Y}$ is coresolving). 
\end{itemize}
By \emph{resolving} we mean that $\mathcal{X}$ is closed under extensions and kernels of epimorphisms, and $\Proj(R) \subseteq \X$. \emph{Coresolving} classes are defined dually. Note that $(\mathcal{X,Y})$ is a hereditary cotorsion pair if, and only if, $\mathcal{X} = {}^{\perp}\mathcal{Y}$ and $\mathcal{Y} = \mathcal{X}^\perp$. Finally, recall that a cotorsion pair $(\X,\Y)$ is cogenerated by a set $\mathcal{S} \subseteq \X$ if $\mathcal{Y} = \mathcal{S}^{\perp_1}$. A well known result by Eklof and Trlifaj \cite{EklofTrlifaj01} shows that such pairs are complete.

%%%%%%%%%%%%%%%%%%%%%%%%%%%%%%%%%%%%
%%%%%%%%%%%%%%%%%%%%%%%%%%%%%%%%%%%%

\subsection*{Duality pairs}

The notion of duality pair was introduced by Holm and J{\o}rgensen in \cite{HJ09}. Two classes $\mathcal{M} \subseteq \Modu(R)$ and $\mathcal{N} \subseteq \Modu (R^{\rm op})$ form a \emph{duality pair} $(\mathcal{M,N})$ over $R$ if:
\begin{enumerate}
\item $M \in \mathcal{M}$ if, and only if, $M^+ \in \mathcal{N}$, where $M^+ := \Hom_{\mathbb{Z}}(M,\mathbb{Q/Z})$ denotes the \emph{character} or \emph{Pontryagin dual} of $M$.  

\item $\mathcal{N}$ is closed under direct summands and finite direct sums. 
\end{enumerate}
One has a similar notion of duality pairs over $R^{\rm op}$ in the case where $\mathcal{M} \subseteq \Modu(R^{\rm op})$ and $\mathcal{N} \subseteq \Modu(R)$, since $(-)^+ = \Hom_{\mathbb{Z}}(-,\mathbb{Q/Z})$ is also a contravariant functor from $\Modu(R^{\rm op})$ to $\Modu(R)$.  

A duality pair $(\mathcal{M,N})$ over $R$ is said to be:
\begin{itemize}
\item \emph{(co)product-closed} if $\mathcal{M}$ is closed under (co)products;

\item \emph{perfect} if it is coproduct closed, $\mathcal{M}$ is closed under extensions and contains ${}_R R$ (the ring $R$ regarded as an  $R$-module);

\item \cite[Appx. A]{BGH14} \emph{symmetric} if $(\mathcal{N,M})$ is a duality pair over $R^{\rm op}$;

\item \cite[Appx. A]{BGH14} \emph{complete} if it is perfect and symmetric;

\item \cite[Def. 3.2]{WangDi20} \emph{bicomplete} if it is complete and $({}^\perp\mathcal{N},\mathcal{N})$ is a hereditary cotorsion pair in $\Modu(R^{\rm op})$ cogenerated by a set;

\item \cite[Def. 3.5]{BecerrilPerez25} \emph{Tor-orthogonal} if $\Tor^R_{\geq 1}(\mathcal{N,M}) = 0$.
\end{itemize}
Some of the previous  types of duality pairs are important since they induce approximations by the classes $\mathcal{M}$ and $\mathcal{N}$. Indeed, it was proven in \cite[Thm.  3.1]{HJ09} that:
\begin{itemize}
\item $\mathcal{M}$ is closed under pure submodules, pure quotients and pure extensions.

\item If $(\mathcal{M,N})$ is coproduct closed, then $\mathcal{M}$ is covering.

\item If $(\mathcal{M,N})$ is product closed, then $\mathcal{M}$ is preenveloping.

\item If $(\mathcal{M,N})$ is perfect, then $(\mathcal{M},\mathcal{M}^{\perp_1})$ is a perfect cotorsion pair (that is, a cotorsion pair such that $\mathcal{M}$ is covering and $\mathcal{M}^{\perp_1}$ is enveloping). Such cotorsion pairs are, in particular, complete. 
\end{itemize}
In \cite[Prop. 2.3]{GillespieDuality} it is proven by Gillespie that if $(\mathcal{M,N})$ is a perfect duality pair, then $\mathcal{P}(R) \subseteq \mathcal{M}$ and $\mathcal{I}(R^{\rm op}) \subseteq \mathcal{N}$. Moreover, $\mathcal{M}$ is closed under direct limits and so $\mathcal{F}(R) \subseteq \mathcal{M}$ by Lazard's Theorem. 

From \cite{HJ09} we know that $([\Flat(R)]^\wedge_m,[\Inj(R^{\rm op})]^\vee_m)$ is a perfect duality pair over $R$ for every $m \in \mathbb{Z}_{\geq 0}$. If $R$ is right coherent (resp., right noetherian), then the previous pair is product closed (resp., symmetric). Another example of complete duality pair which generalizes the previous one is obtained from modules of finite type. These modules and their corresponding Ext and Tor-orthogonal complements were studied in \cite{BP17} by Bravo and the second named author. An $R^{\rm op}$-module $F$  is \emph{of type} $\text{FP}_n$ if there exists an exact sequence
\[
P_n \to P_{n-1} \to \cdots \to P_1 \to P_0 \to F \to 0
\]
where each $P_k$ is finitely generated and free. We denote by $\FP_n(R^{\rm op})$ the class formed by these modules. Note that if $n = 0$ (resp., $n = 1$) we obtain the class of finitely generated (resp., finitely presented) $R^{\rm op}$-modules. Moreover, the class of $R^{\rm op}$-modules of type $\text{FP}_\infty$ is defined as the intersection 
\[
\FP_\infty(R^{\rm op}) := \bigcap_{n \geq 0} \FP_n(R^{\rm op}).
\] 
The orthogonal complements $[\FP_n(R^{\rm op})]^{\perp_1}$ and $[\FP_n(R^{\rm op})]^{\top_1}$ are the classes of $\text{FP}_n$-injective $R^{\rm op}$-modules and of $\text{FP}_n$-flat $R$-modules, which we denote by $\Inj_n(R^{\rm op})$ and $\Flat_n(R)$, respectively. If $n > 1$, it is shown in \cite{BP17} that $(\Flat_n(R),\Inj_n(R^{\rm op}))$ is a complete duality pair. More examples of complete duality pairs can be found in \cite[Ex. 3.10]{BecerrilPerez25}.

%%%%%%%%%%%%%%%%%%%%%%%%%%%%%%%%%%%%
%%%%%%%%%%%%%%%%%%%%%%%%%%%%%%%%%%%%

\subsection*{Semidualizing bimodules}

Let $R$ and $S$ be associative rings with identities. In \cite{ArayaTakahashiYoshino05} Araya, Takahashi and Yoshino introduced the notion semidualizing bimodules. An $(S, R)$-bimodule $C = {}_S C_R$ is \emph{semidualizing} if:
\begin{enumerate}
\item[$\mathsf{(sd1)}$] ${}_S C \in \FP_\infty(S)$.
\item[$\mathsf{(sd2)}$] $C_R \in \FP_\infty(R^{\rm op})$.
\item[$\mathsf{(sd3)}$] The homothety map ${}_S S_S \to \Hom_{R^{\op}}(C,C)$ is an isomorphism.
\item[$\mathsf{(sd4)}$] The homothety map ${}_R R_R \to \Hom_S (C,C)$ is an isomorphism. 
\item[$\mathsf{(sd5)}$] $\Ext_S^{\geq 1}(C,C) = 0$.
\item[$\mathsf{(sd6)}$] $\Ext_{R^{\op}}^{\geq 1}(C,C) = 0$.
\end{enumerate}
Examples of semidualizing bimodules include free $R$-modules of rank one, ${}_\Lambda D(\Lambda)_\Lambda$ where $\Lambda$ is a finite dimensional algebra over a field $\mathbb{K}$ and $D(\Lambda) = \Hom_{\mathbb{K}}(\Lambda,\mathbb{K})$, and the \emph{generalized tilting modules} (a.k.a. \emph{Wakamatsu tilting}) introduced in \cite{Wakamatsu88}.

A semidualizing bimodule $_S C _R$ is called \emph{faithfully semidualizing} if it satisfies the following conditions for every $M \in \Modu(S)$ and $N \in \Modu(R^{\rm op})$:
\begin{enumerate}
\item[$\mathsf{(fsd7)}$] If $\Hom_S (C,M) = 0$, then $M = 0$,
\item[$\mathsf{(fsd8)}$] If $\Hom_{R^{\op}} (C,N) = 0$, then $N = 0$. 
\end{enumerate}

Associated to a semidualizing bimodule $_S C_R$ we have the Auslander and Bass classes, introduced by Foxby in \cite{Foxby73}. The \emph{Auslander class} $\A_C(R)$ with respect to ${}_S C_R$ consists of all $R$-modules $M \in \Modu (R)$ satisfying the following conditions:
\begin{enumerate}
\item[$\mathsf{(a1)}$] $\Tor^{R}_{\geq 1}(C,M ) = 0 $.
\item[$\mathsf{(a2)}$] $\Ext_S^{\geq 1}(C, C \otimes_R M) = 0$.
\item[$\mathsf{(a3)}$] The natural evaluation map $\mu_{M} \colon {}_R M \to \Hom_S(C, C \otimes_R M)$ is an isomorphism in $\Modu(R)$.
\end{enumerate} 
The \emph{Bass class} $\B_C(S)$ with respect to  ${}_S C_R$ consists of all modules $N \in \Modu (S)$ satisfying the following conditions:
\begin{enumerate}
\item[$\mathsf{(b1)}$] $\Ext_S^{\geq 1}(C, N) = 0$.
\item[$\mathsf{(b2)}$] $\Tor_{\geq 1}^R(C, \Hom_S(C,N)) = 0$.
\item[$\mathsf{(b3)}$] The natural evaluation map $\nu_N \colon C \otimes_R \Hom_S (C, N) \to N$ is an isomorphism in $\Modu (S)$.
\end{enumerate} 

Note that $C$ is a semidualizing $(S,R)$-bimodule if, and only if, it is a semidualizing $(R^{\rm op},S^{\rm op})$-bimodule. We can thus also consider the corresponding Auslander and Bass classes $\A_C(S^{\rm op})$ and $\B_C(R^{\rm op})$. 

An important and useful property of the Auslander and Bass classes proved in \cite[Prop. 4.1]{HolmWhite07} is the existence of the following equivalences of categories:
\[
\begin{tikzpicture}[description/.style={fill=white,inner sep=2pt}] 
\matrix (m) [ampersand replacement=\&, matrix of math nodes, row sep=3em, column sep=6em, text height=1.25ex, text depth=0.25ex] 
{ 
\A_C(R) \& \B_C(S) \& \text{$\mathsf{[first}$ $\mathsf{Foxby}$ $\mathsf{equivalence]}$} \\
}; 
\path[->] 
(m-1-1) edge [bend left = 20] node[above] {\footnotesize$C \otimes_R \sim$} node[below] {\footnotesize$\sim$} (m-1-2)
(m-1-2) edge [bend left = 20] node[above] {\footnotesize$\sim$} node[below] {\footnotesize$\Hom_S(C,\sim)$} (m-1-1)
;
\end{tikzpicture} 
\]
\[
\begin{tikzpicture}[description/.style={fill=white,inner sep=2pt}] 
\matrix (m) [ampersand replacement=\&, matrix of math nodes, row sep=3em, column sep=6em, text height=1.25ex, text depth=0.25ex] 
{ 
\A_C(S^{\rm op}) \& \B_C(R^{\rm op}) \& \text{$\mathsf{[second}$ $\mathsf{Foxby}$ $\mathsf{equivalence]}$} \\
}; 
\path[->] 
(m-1-1) edge [bend left = 20] node[above] {\footnotesize$- \otimes_S C$} node[below] {\footnotesize$\sim$} (m-1-2)
(m-1-2) edge [bend left = 20] node[above] {\footnotesize$\sim$} node[below] {\footnotesize$\Hom_{R^{\rm op}}(C,\sim)$} (m-1-1)
;
\end{tikzpicture} 
\]
It was recently proved by Huang in \cite[Thm. 3.3]{Huang21} that $(\A_C(R),\B_C(R^{\op}))$ is a perfect product-closed duality pair over $R$, and that $(\B_C(R^{\op}),\A_C(R))$ is a (co)product-closed duality pair over $R^{\rm op}$. In particular, $(\A_C(R),\B_C(R^{\op}))$ is a complete duality pair. Similarly, $(\A_C(S^{\op}),\B_C(S))$ is a complete duality pair over $S^{\rm op}$ with $\B _C (S)$ closed under (co)products. Moreover, we can note by \cite[Thm. 6.2]{HolmWhite07} that $\B_C(S)$ is always coresolving and $\A_C (S^{\op})$ is always resolving. If in addition ${}_S C_R$ is faithfully semidualizing, by \cite[Coroll. 6.3]{HolmWhite07} $\B_C(S)$ is closed under kernels of epimorphisms and $\A_C (S^{\op})$ under cokernels of monomorphisms, and so they are \emph{thick} (that is, closed under extensions, kernels of epimorphisms, cokernels of monomorphisms and direct summands). Similar comments apply to $\A_C(R)$ and $\B_C(R^{\op})$.

%%%%%%%%%%%%%%%%%%%%%%%%%%%%%%%%%%%%
%%%%%%%%%%%%%%%%%%%%%%%%%%%%%%%%%%%%

\subsection*{Abelian model structures}

Recall from \cite{Hovey02} that an abelian model structure on $\Modu (R)$ is formed by three classes of $R$-homomorphisms, called fibrations, cofibrations and weak equivalences, such that: an $R$-homomorphism $f$ is a cofibration (resp., fibration) if, and only if, it is a monomorphism (resp., an epimorphism) such that $\Coker(f)$ is a cofibrant $R$-module (resp., $\Ker(f)$ is fibrant). We recommend \cite{Hovey99} as a nice introduction to the subject of model categories. There is an appealing one-to-one correspondence between abelian model structures and cotorsion pairs, proved in \cite[Thm. 2.2]{Hovey02}. Later in \cite[Thm. 1.2]{Gillespie15}, Gillespie proved that if $(\mathcal{X},\mathcal{Y}')$ and $(\mathcal{X}',\mathcal{Y})$ are hereditary complete cotorsion pairs in $\Modu (R)$ (or in any weakly idempotent complete exact category) such that $\mathcal{X}' \subseteq \mathcal{X}$, $\mathcal{Y}' \subseteq \mathcal{Y}$ and $\mathcal{X} \cap \mathcal{Y}' = \mathcal{X}' \cap \mathcal{Y}$, then there exists a thick class $\mathcal{W}$ and a unique abelian model structure on $\Modu (R)$ such that $\mathcal{X}$, $\mathcal{Y}$ and $\mathcal{W}$ are the classes of cofibrant, fibrant and trivial $R$-modules, respectively. The model structures found in this paper are obtained by this method.

%%%%%%%%%%%%%%%%%%%%%%%%%%%%%%%%%%%%
%%%%%%%%%%%%%%%%%%%%%%%%%%%%%%%%%%%%
%%%%%%%%%%%%%%%%%%%%%%%%%%%%%%%%%%%%
%%%%%%%%%%%%%%%%%%%%%%%%%%%%%%%%%%%%

\section{Induced duality pairs} \label{sec:induced}

In \cite{WuGao22}, Wu and Gao find a relation between the duality pairs $(\Flat_n(R),\Inj_n(R^{\rm op}))$ and $(\A_C(R),\B_C(R^{\rm op}))$. More specifically, given a semidualizing bimodule ${}_S C_R$, they first show in \cite[Prop. 3.3]{WuGao22} that $\Flat_n(R) \subseteq \A_C(R)$ and $\Inj_n(S) \subseteq \B_C(S)$. This allows to restrict the Foxby equivalence $C \otimes_R \sim$ and its quasi-inverse $\Hom_S(C,\sim)$ to the subcategories $\Flat_n(R)$ and $\Inj_n(S)$, respectively. Following the notation in \cite{WuGao22}, let $\mathcal{FF}^n_C(S)$ and $\mathcal{FI}^n_C(R)$ denote the essential images of $\Flat_n(R)$ and $\Inj_n(S)$ under $C \otimes_R \sim$ and $\Hom_S(C,\sim)$, that is, 
\begin{align*}
\mathcal{FF}^n_C(S) & := \{ M \in \Modu(S) \ {\rm : } \ M \simeq C \otimes_R F \text{ where } F \in \Flat_n(R) \}, \\
\mathcal{FI}^n_C(R) & := \{ N \in \Modu(R) \ {\rm : } \ N \simeq \Hom_S(C,E) \text{ where } E \in \Inj_n(S) \},
\end{align*}
(where the notation $\simeq$ means that two objects are isomorphic). It turns out from \cite[Prop. 4.1]{WuGao22} that there is a equivalence of categories between $\Flat_n(R)$ and $\mathcal{FF}^n_C(S)$, and between $\mathcal{FI}^n_C(R)$ and $\Inj_n(S)$, given by the restrictions $(C \otimes_R \sim)|_{\Flat_n(R)}$ and $\Hom_S(C,\sim) |_{\Inj_n(S)}$, respectively. Moreover, in \cite[\S 3]{WuGao22} the authors show that 
\[
(\mathcal{FF}^n_C(S),\mathcal{FI}^n_C(S^{\rm op})) \text{ \ \ and \ \ } (\mathcal{FI}^n_C(S^{\rm op}),\mathcal{FF}^n_C(S))
\] 
are (co)product closed duality pairs over $S$ and $S^{\rm op}$, respectively. In addition, if ${}_S S \in \mathcal{FF}^n_C(S)$ (resp., $S_S \in \mathcal{FI}^n_C(S^{\rm op})$) then 
\[
(\mathcal{FF}^n_C(S),\mathcal{FI}^n_C(S^{\rm op})) \ \ (\text{resp., } (\mathcal{FI}^n_C(S^{\rm op}),\mathcal{FF}^n_C(S)))
\] 
is a complete duality pair. In other words, these pairs inherit the properties of the duality pairs $(\Flat_n(R),\Inj_n(R^{\rm op}))$ and $(\Flat_n(S^{\rm op}),\Inj_n(S))$. 

The main goal of this section is to generalize the previous result to an arbitrary duality pair $(\mathcal{M,N})$ over $R$. The idea is to explore the possibility of inducing from $(\mathcal{M,N})$ via Foxby equivalences a duality pair whose halves are contained in the Auslander and Bass classes $\A_C(R)$ and $\B_C(R^{\rm op})$. One problem with this is that not every duality pair $(\mathcal{M,N})$ over $R$ satisfies the containments $\mathcal{M} \subseteq \A_C(R)$ and $\mathcal{N} \subseteq \B_C(R^{\rm op})$. However, we can overcome this constraint by considering the intersections $\mathcal{M} \cap \A_C(R)$ and $\mathcal{N} \cap \B_C(R^{\rm op})$. These intersections are not trivial if $(\mathcal{M,N})$ is perfect (as many duality pairs of interest), since in this situation one has $\Flat(R) \subseteq \mathcal{M} \cap \A_C(R)$ and $\Inj(R^{\rm op}) \subseteq \B_C(R^{\rm op})$. 
 
From now on, $C = {}_S C_R$ is a semidualizing $(S,R)$-bimodule and $(\mathcal{M,N})$ is a duality pair over $R$. We consider the following induced classes of $S$-modules and $S^{\rm op}$-modules:
\begin{align*}
\mathcal{M}^C(S) & := \{ M \in \Modu(S) \ {\rm : } \ M \simeq C \otimes_R A \text{ where } A \in \A_C(R) \cap \mathcal{M} \}, \\
\mathcal{N}^C(S^{\rm op}) & := \{ N \in \Modu(S^{\rm op}) \ {\rm : } \ N \simeq \Hom_{R^{\rm op}}(C,B) \text{ where } B \in \B_C(R^{\rm op}) \cap \mathcal{N} \}.
\end{align*}

\begin{remark}
Note that when $(\mathcal{M,N}) = (\Flat(R), \Inj(R^{\op}))$, $\mathcal{M}^C(S)$ and $\mathcal{N}^C(S^{\rm op})$ are precisely the classes $\Flat_C(S)$ and $\Inj_C( S^{\op})$ of \cite[Def. 5.1]{HolmWhite07}. In \cite[\S 5]{HolmWhite07} the authors show which properties are inherited from $(\Flat(R), \Inj(R^{\op}))$ by these classes $\Flat_C(S)$ and $\Inj_C( S^{\op})$. Moreover, we also obtain the classes $\mathcal{FF}^n_C(S)$ and $\mathcal{FI}^n_C(S^{\rm op})$ after setting $(\mathcal{M,N}) = (\Flat_n(R), \Inj_n(R^{\op}))$.
\end{remark}

Following the characterizations of the classes $\Flat_C(S)$ and $\Inj_C( S^{\op})$ shown in \cite[Lem. 5.1]{HolmWhite07}, and of $\mathcal{FF}^n_C(S)$ and $\mathcal{FI}^n_C(S^{\rm op})$ obtained in \cite[Lem. 3.5]{WuGao22}, we have the following descriptions of the induced classes $\M^{C}(S)$ and $\mathcal{N}^{C}(S^{\op})$ (the proof is straightforward).

\begin{lemma}[characterizations of the induced classes] \label{Lema1}
The following assertions hold for every $M \in \Modu(S)$ and $N \in \Modu(S^{\rm op})$:
\begin{enumerate}
\item $M \in \M^{C}(S)$ if, and only if, $M \in \B_C(S)$ and $\Hom_S(C, M) \in \M$.

\item $N \in \mathcal{N}^{C}(S^{\op})$ if, and only if, $N \in \A_C(S^{\op})$ and $N \otimes_{S} C \in \N$.
\end{enumerate}
\end{lemma}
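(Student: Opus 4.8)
The plan is to derive both biconditionals formally from the two Foxby equivalences recalled in the Preliminaries, together with the evaluation isomorphisms $\mathsf{(a3)}$ and $\mathsf{(b3)}$ built into the definitions of the Auslander and Bass classes; this follows the same pattern as the analogous characterizations in \cite{HolmWhite07} and \cite{WuGao22}, and it uses no new idea, which is why the proof is ``straightforward''.

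For part (1) I would work with the first Foxby equivalence $C \otimes_R \sim \colon \A_C(R) \to \B_C(S)$ and its quasi-inverse $\Hom_S(C,\sim) \colon \B_C(S) \to \A_C(R)$. For the forward implication, start from $M \simeq C \otimes_R A$ with $A \in \A_C(R) \cap \M$: since $A \in \A_C(R)$, the equivalence places $C \otimes_R A$, hence $M$, in $\B_C(S)$, and since the evaluation map $\mu_A$ of $\mathsf{(a3)}$ is an isomorphism, $\Hom_S(C,M) \simeq \Hom_S(C, C \otimes_R A) \simeq A \in \M$, so $\Hom_S(C,M) \in \M$ as $\M$ is closed under isomorphisms. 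For the converse, assume $M \in \B_C(S)$ and set $A := \Hom_S(C,M)$; the equivalence gives $A \in \A_C(R)$, the hypothesis gives $A \in \M$, hence $A \in \A_C(R) \cap \M$, while the evaluation map $\nu_M$ of $\mathsf{(b3)}$ is an isomorphism $C \otimes_R A = C \otimes_R \Hom_S(C,M) \to M$, so $M \in \M^C(S)$.

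Part (2) is the mirror argument with the second Foxby equivalence $- \otimes_S C \colon \A_C(S^{\op}) \to \B_C(R^{\op})$ and its quasi-inverse $\Hom_{R^{\op}}(C,\sim)$: in the forward direction one applies $\mathsf{(b3)}$ to a module $B \in \B_C(R^{\op})$ to obtain $\Hom_{R^{\op}}(C,B) \otimes_S C \simeq B$, and in the converse one applies $\mathsf{(a3)}$ to a module $N \in \A_C(S^{\op})$ to obtain $\Hom_{R^{\op}}(C, N \otimes_S C) \simeq N$. I do not expect a genuine obstacle here; the only point worth recording is the closure of $\M$ and $\N$ under isomorphisms used tacitly above, and this is automatic, because $\N$ is closed under direct summands by the definition of a duality pair, hence under isomorphisms, and $\M$ inherits isomorphism-closure from the biconditional $M \in \M \iff M^+ \in \N$.
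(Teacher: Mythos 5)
Your proof is correct and is exactly the argument one expects: the paper omits the proof (it only says ``the proof is straightforward''), and what you wrote is the standard derivation from the two Foxby equivalences together with the evaluation isomorphisms $\mathsf{(a3)}$ and $\mathsf{(b3)}$, the same pattern as \cite[Lem.~5.1]{HolmWhite07} and \cite[Lem.~3.5]{WuGao22}. Your closing remark on isomorphism-closure of $\M$ and $\N$ is a nice explicit justification of a point the paper leaves tacit.
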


The following result is a relative version of \cite[Prop. 4.1]{WuGao22} (equivalences between $\Flat_n(R)$ and $\mathcal{FF}^n_C(S)$, and between $\mathcal{FI}^n_C(R)$ and $\Inj_n(S)$). Its proof is similar to that provided by Wu and Gao, and its a consequence of the previous lemma.

\begin{proposition}[restricted Foxby equivalences] \label{Equivalencia1}
There are equivalences of categories 
\[
\begin{tikzpicture}[description/.style={fill=white,inner sep=2pt}] 
\matrix (m) [ampersand replacement=\&, matrix of math nodes, row sep=3em, column sep=4.5em, text height=1.25ex, text depth=0.25ex] 
{ 
\M \cap \A_C(R) \& \M^C(S) \& \N^C(S^{\rm op}) \& \N \cap \B_C(R^{\rm op}) \\
}; 
\path[->] 
(m-1-1) edge [bend left = 20] node[above] {\footnotesize$C \otimes_R \sim$} node[below] {\footnotesize$\sim$} (m-1-2)
(m-1-2) edge [bend left = 20] node[above] {\footnotesize$\sim$} node[below] {\footnotesize$\Hom_S(C,\sim)$} (m-1-1)
(m-1-3) edge [bend left = 20] node[above] {\footnotesize$- \otimes_{S} C$} node[below] {\footnotesize$\sim$} (m-1-4)
(m-1-4) edge [bend left = 20] node[above] {\footnotesize$\sim$} node[below] {\footnotesize$\Hom_{R^{\rm op}}(C,\sim)$} (m-1-3)
;
\end{tikzpicture} 
\]
\end{proposition}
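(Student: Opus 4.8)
The plan is to recognize both restricted functors as honest restrictions of the two Foxby equivalences recalled in Section~\ref{sec:prelims}, and to use Lemma~\ref{Lema1} only to check that the quasi‑inverses carry the induced classes back to where they came from. Recall the elementary fact that if $F\colon\mathcal{C}\to\mathcal{D}$ is an equivalence with quasi‑inverse $G$, and $\mathcal{C}'\subseteq\mathcal{C}$, $\mathcal{D}'\subseteq\mathcal{D}$ are full subcategories with $F(\mathcal{C}')\subseteq\mathcal{D}'$ and $G(\mathcal{D}')\subseteq\mathcal{C}'$, then $F|_{\mathcal{C}'}$ and $G|_{\mathcal{D}'}$ are mutually quasi‑inverse equivalences, because the unit and counit isomorphisms restrict to the subcategories. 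Since $\A_C(R)\cap\M$, $\M^C(S)$, $\N^C(S^{\op})$ and $\N\cap\B_C(R^{\op})$ are all full subcategories of module categories (they are cut out by conditions on objects), everything reduces to verifying two containments in each case; full faithfulness and the natural isomorphisms are then inherited verbatim from the ambient Foxby equivalences.

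For the left‑hand equivalence I would start from the first Foxby equivalence $C\otimes_R\sim\colon\A_C(R)\to\B_C(S)$ with quasi‑inverse $\Hom_S(C,\sim)$. By definition $\M^C(S)$ is the essential image of $\A_C(R)\cap\M$ under $C\otimes_R\sim$, so $(C\otimes_R\sim)(\A_C(R)\cap\M)\subseteq\M^C(S)$ is immediate. Conversely, take $M\in\M^C(S)$. By Lemma~\ref{Lema1}(1), $M\in\B_C(S)$ and $\Hom_S(C,M)\in\M$; since $\Hom_S(C,\sim)$ carries $\B_C(S)$ into $\A_C(R)$ (first Foxby equivalence), we obtain $\Hom_S(C,M)\in\A_C(R)\cap\M$. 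Thus the quasi‑inverse restricts, and the evaluation isomorphisms $\mu_A\colon A\to\Hom_S(C,C\otimes_RA)$ for $A\in\A_C(R)\cap\M$ and $\nu_M\colon C\otimes_R\Hom_S(C,M)\to M$ for $M\in\M^C(S)$ provide the required natural isomorphisms.

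The right‑hand equivalence is handled symmetrically, now via the second Foxby equivalence $-\otimes_SC\colon\A_C(S^{\op})\to\B_C(R^{\op})$ with quasi‑inverse $\Hom_{R^{\op}}(C,\sim)$. By construction, $\N^C(S^{\op})$ is the essential image of $\N\cap\B_C(R^{\op})$ under $\Hom_{R^{\op}}(C,\sim)$, so one containment is automatic; for the other, given $N\in\N^C(S^{\op})$, Lemma~\ref{Lema1}(2) yields $N\in\A_C(S^{\op})$ and $N\otimes_SC\in\N$, and since $-\otimes_SC$ maps $\A_C(S^{\op})$ into $\B_C(R^{\op})$ we get $N\otimes_SC\in\N\cap\B_C(R^{\op})$. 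The structural isomorphisms of the second Foxby equivalence then restrict, completing the argument.

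I do not expect a genuine obstacle here: the mathematical content has been localized in Lemma~\ref{Lema1}, which is itself a routine unwinding of the defining conditions $\mathsf{(a1)}$–$\mathsf{(a3)}$ and $\mathsf{(b1)}$–$\mathsf{(b3)}$ of the Auslander and Bass classes. The only points deserving care are bookkeeping—tracking over which ring each module lives and in which direction each of the four functors runs—and the observation that $\M^C(S)$ and $\N^C(S^{\op})$ are closed under isomorphism by their very definition as essential images, so that the restricted functors are essentially surjective onto them with no further work.
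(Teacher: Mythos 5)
Your proof is correct and takes essentially the same route the paper indicates: the paper simply states the result is ``a consequence of the previous lemma'' (Lemma~\ref{Lema1}), and your argument makes that reduction explicit by showing the quasi-inverses carry $\M^C(S)$ and $\N^C(S^{\rm op})$ back into $\M\cap\A_C(R)$ and $\N\cap\B_C(R^{\rm op})$, so the ambient Foxby equivalences restrict. Nothing is missing.
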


The following result generalizes \cite[Prop. 3.13 (1)]{WuGao22}. We provide a proof for the reader's convenience and because of the importance of this result for the rest of the paper.

\begin{proposition}[induced duality pair]\label{Dualidad1}
$(\M^C(S),\N^C(S^{\rm op}))$ is a duality pair over $S$. In particular, $\M^C(S)$ is closed under pure submodules, pure quotients and pure extensions. Moreover, if $(\M,\N)$ is symmetric, then so is $(\M^C(S),\N^C(S^{\rm op}))$.
\end{proposition}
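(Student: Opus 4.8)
The plan is to verify the two defining conditions of a duality pair for $(\M^C(S),\N^C(S^{\rm op}))$ directly, using the characterizations from Lemma \ref{Lema1} together with the standard behavior of the Pontryagin dual under the functors $C\otimes_R\sim$ and $\Hom_{R^{\rm op}}(C,\sim)$. The key technical input is the pair of natural isomorphisms $(C\otimes_R A)^+\cong\Hom_{R^{\rm op}}(C,A^+)$ for an $S$-module of the form $C\otimes_R A$ (adjunction, since $C_R\in\FP_\infty(R^{\rm op})$ ensures the Hom-tensor duality applies well), and dually $\Hom_{R^{\rm op}}(C,B)^+\cong C\otimes_S B^+$ for $B\in\Modu(R^{\rm op})$. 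I will also use that the Auslander and Bass classes are themselves interchanged by Pontryagin duality in the sense recorded in the preliminaries (Huang's theorem): $M\in\B_C(S)\iff M^+\in\A_C(S^{\rm op})$, etc.

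\emph{Condition (1): $M\in\M^C(S)\iff M^+\in\N^C(S^{\rm op})$.} First I would take $M\in\M^C(S)$. By Lemma \ref{Lema1}(1), $M\in\B_C(S)$ and $\Hom_S(C,M)\in\M$. Since $(\A_C(S^{\rm op}),\B_C(S))$ is a duality pair over $S^{\rm op}$, $M\in\B_C(S)$ gives $M^+\in\A_C(S^{\rm op})$. It then remains to check $M^+\otimes_S C\in\N$; using $M\cong C\otimes_R A$ with $A\in\A_C(R)\cap\M$ and the duality isomorphism $(C\otimes_R A)^+\cong\Hom_{R^{\rm op}}(C,A^+)$, together with $\Hom_S(C,M)\cong A$ (Foxby equivalence), one computes $M^+\otimes_S C\cong A^+$ (applying the $\nu$-isomorphism for $A^+\in\B_C(R^{\rm op})$, which holds since $A\in\A_C(R)$ and $(\A_C(R),\B_C(R^{\rm op}))$ is a duality pair). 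Finally $A\in\M$ forces $A^+\in\N$ because $(\M,\N)$ is a duality pair over $R$. Hence $M^+\in\N^C(S^{\rm op})$ by Lemma \ref{Lema1}(2). The converse direction runs symmetrically: starting from $M^+\in\N^C(S^{\rm op})$, one recovers $M\in\B_C(S)$ from $M^+\in\A_C(S^{\rm op})$ via the duality pair over $S^{\rm op}$ (using that $M\cong M^{++}$-type arguments are avoided by invoking the "only if" of condition (1) for $(\A_C(S^{\rm op}),\B_C(S))$ directly on $M$), and $\Hom_S(C,M)\in\M$ from $M^+\otimes_S C\in\N$ by reversing the above computation and invoking the "if" half of the duality pair $(\M,\N)$.

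\emph{Condition (2): $\N^C(S^{\rm op})$ is closed under direct summands and finite direct sums.} Finite direct sums are immediate because $C\otimes_R\sim$, $\Hom_{R^{\rm op}}(C,\sim)$, and the classes $\A_C(R)\cap\M$, $\B_C(R^{\rm op})\cap\N$ are all closed under finite direct sums (the Auslander/Bass classes are closed under finite sums; $\M$ and $\N$ are, the latter by hypothesis, the former as a consequence of $(\M,\N)$ being a duality pair and $\N$ being closed under finite sums — or one simply argues via condition (1) and the fact that $(-)^+$ turns finite sums into finite sums). For direct summands, I would use the characterization in Lemma \ref{Lema1}(2): a summand $N'$ of $N\in\N^C(S^{\rm op})$ is a summand of an object of $\A_C(S^{\rm op})$, hence lies in $\A_C(S^{\rm op})$ (the Bass/Auslander classes are closed under summands); and $N'\otimes_S C$ is a summand of $N\otimes_S C\in\N$, so $N'\otimes_S C\in\N$ since $\N$ is closed under summands by hypothesis. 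Thus $N'\in\N^C(S^{\rm op})$.

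\emph{Symmetric case and the "in particular" clause.} The "in particular" statement — that $\M^C(S)$ is closed under pure submodules, quotients, and extensions — is immediate from \cite[Thm. 3.1]{HJ09} once $(\M^C(S),\N^C(S^{\rm op}))$ is known to be a duality pair, so nothing further is needed. For the symmetric case, suppose $(\N,\M)$ is a duality pair over $R^{\rm op}$; I would show $(\N^C(S^{\rm op}),\M^C(S))$ is a duality pair over $S^{\rm op}$ by the same two-step verification, swapping the roles of the two Foxby equivalences and of the two Pontryagin duality isomorphisms, and invoking that $(\B_C(R^{\rm op}),\A_C(R))$ and $(\B_C(S),\A_C(S^{\rm op}))$ are duality pairs (Huang's theorem, as recorded in the preliminaries) so that $\A_C$ and $\B_C$ are symmetric with respect to this construction. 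Condition (2) for $\M^C(S)$ is handled exactly as for $\N^C(S^{\rm op})$ above, using Lemma \ref{Lema1}(1) and closure of $\M$ under summands (which follows from $(\N,\M)$ being a duality pair). \textbf{Main obstacle.} The only place demanding genuine care is bookkeeping the natural isomorphisms: one must confirm that $(C\otimes_R A)^+\cong\Hom_{R^{\rm op}}(C,A^+)$ and that under the Foxby equivalences this identifies $\Hom_S(C,(C\otimes_R A)^+)\otimes\cdots$ correctly, i.e. that the three transformations (adjunction/duality, the $\mu$-map for $A$, the $\nu$-map for $A^+$) are compatible so that $M^+\otimes_S C\cong A^+$ on the nose and not merely up to some unrecorded twist. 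Everything else is formal closure-property manipulation.
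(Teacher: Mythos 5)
Your proof is correct and relies on the same ingredients as the paper's: the natural isomorphism $(C\otimes_R A)^+\cong\Hom_{R^{\rm op}}(C,A^+)$, the Hom-tensor duality $[\Hom_S(C,M)]^+\cong M^+\otimes_S C$ (valid since ${}_S C$ is finitely presented over $S$), and the duality-pair properties of $(\A_C(R),\B_C(R^{\rm op}))$, $(\A_C(S^{\rm op}),\B_C(S))$ and $(\M,\N)$. The one organizational difference is the converse of condition (1): the paper derives $M\cong C\otimes_R\Hom_S(C,M)$ by first computing $M^+\cong\Hom_{R^{\rm op}}(C,[\Hom_S(C,M)]^+)\cong[C\otimes_R\Hom_S(C,M)]^+$ and then appealing to $\mathbb{Q}/\mathbb{Z}$ being a cogenerator of $\Modu(\mathbb{Z})$ to strip off the $(-)^+$, whereas you invoke the symmetry of $(\A_C(S^{\rm op}),\B_C(S))$ (Huang's theorem) to conclude $M\in\B_C(S)$ directly, so that the $\nu$-isomorphism hands you $M\cong C\otimes_R\Hom_S(C,M)$; afterwards the ``only if'' computation gives $\Hom_S(C,M)\in\M$. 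Both routes are valid — yours is slightly shorter, the paper's is slightly more self-contained at that step. One small bookkeeping slip in your stated plan: the dual isomorphism should read $\Hom_{R^{\rm op}}(C,B)^+\cong C\otimes_R B^+$ (tensor over $R$, not $S$), but since you do not actually use this formula in the body of the argument it does not affect correctness.
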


\begin{proof}
Let us first show that $M \in \M^C(S)$ if, and only if, $M^+ \in \N^C(S^{\rm op})$. Suppose that $M \in \M^C(S)$. Then, $M \simeq C \otimes_R A$ for some $A \in \A_C(R) \cap \M$. By \cite[Thm. 3.2.1]{EnochsJenda00}, we have the natural isomorphism
\[
M^+ \simeq (C \otimes_R A)^+ \simeq \Hom_{R^{\rm op}}(C,A^+),
\]
where $A^+ \in \B_C(R^{\rm op}) \cap \N$ since $(\A_C(R),\B_C(R^{\rm op}))$ and $(\mathcal{M,N})$ are duality pairs over $R$. It then follows that $M^+ \in \N^C(S^{\rm op})$. Now assume that $M^+ \in \N^C(S^{\rm op})$, that is, $M^+ \simeq \Hom_{R^{\rm op}}(C,B)$ for some $B \in \B_C(R^{\rm op}) \cap \N$. Using the second Foxby equivalence, we have that $M^+ \in \A_C(S^{\rm op})$ and that 
\[
M^+ \otimes_S C \simeq \Hom_{R^{\rm op}}(C,B) \otimes_S C \simeq B.
\]
Now, since  $C$ is finitely presented $S$-module by $\mathsf{(sd1)}$, the previous and \cite[Thm. 3.2.11]{EnochsJenda00} imply that
\[
[\Hom_S(C,M)]^+ \simeq M^+ \otimes_S C \simeq B \in \B_C(R^{\op}) \cap \N.
\]
On the other hand, since $M^+ \in \A_C (S^{\op})$, by the second Foxby equivalence, \cite[Thm. 3.2.1]{EnochsJenda00} again, and the previous natural isomorphism we obtain 
\[
M^+ \simeq \Hom _{R^{\op}}(C, M^+ \otimes_S C) \simeq \Hom _{R^{\op}}(C, [\Hom_S(C,M)]^+) \simeq [C \otimes_R \Hom_S(C, M)]^+. 
\]
Since $\mathbb{Q/Z}$ is a cogenerator in the category $\Modu(\mathbb{Z})$ of abelian groups, the isomorphism $M^+ \simeq [C \otimes_R \Hom_S(C, M)]^+$ yields $M \simeq C \otimes_R \Hom_S(C, M)$, where $\Hom_S(C, M) \in \A_C(R) \cap \M$ since $[\Hom_S(C,M)]^+ \in \B_C(R^{\op}) \cap \N$ and $(\mathcal{M,N})$ and $(\A_C(R),\B_C(R^{\rm op}))$ are duality pairs over $R$. Hence, $M \in \M^C(S)$.  

Using similar arguments, one can show that if $(\M,\N)$ is symmetric, then so is $(\M^C(S),\N^C(S^{\rm op}))$. For one uses the fact that $(\A_C(R),\B_C(R^{\rm op}))$ is a complete (and so symmetric) duality pair. 

It remains to prove that $\N^{C}(S^{\op})$ is closed under direct summands and finite direct sums, but this follows from standard homological algebra arguments.
\end{proof}

\begin{remark}
Note that the previous result provides a more direct proof \cite[Prop. 3.11]{WuGao22}. 
\end{remark}

If we assume additional properties for the duality pair $(\mathcal{M,N})$, in the following results we explore which of these properties can be inherited by the duality pair $(\M^{C}(S), \N^{C} (S^{\op}))$.

\begin{proposition}[inherited properties] \label{Prop0}
The followings statements hold:
\begin{enumerate}
\item If $\M$ (resp., $\N$) is closed under extensions, then so is $\M^{C} (S)$ (resp., $\N^{C}(S^{\op})$).
\item If $\M$ (resp., $\N$) is closed under direct products and direct limits (in particular, coproducts) then so is $\M^C(S)$ (resp., $\N^C(S^{\op})$).
\item If $(\M,\N)$ is (co)product closed, then so is $(\M^C(S),\N^C(S^{\rm op}))$, and $\M^C(S)$ is preenveloping (resp., covering).
\item If $(\M,\N)$ is perfect and $S \in \M^C(S)$, then so is $(\M^C(S),\N^C(S^{\rm op}))$, and $(\M^C(S),[\M^C(S)]^{\perp_1})$ is a perfect cotorsion pair. 
\item Consider the following assertions: 
\begin{enumerate}[(a)]
\item $\M^C(S)$ is closed under kernels of epimorphisms.
\item $\M \cap \A _C(R)$ is closed under kernels of epimorphisms.
\item $\N^C(S^{\rm op})$ is closed under cokernels of monomorphisms.
\item $\N \cap \B_C(R^{\rm op})$ is closed under cokernels of monomorphisms.
\end{enumerate}
If ${}_SC_R$ is faithfully semidualizing, then the implications (a) $\Leftrightarrow$ (b) and (c) $\Leftrightarrow$ (d) hold. If in addition $(\M,\N)$ is symmetric, then all the assertions are equivalent. 
\item If $(\M,\N)$ is complete and $S \in \M^C(S)$, then $(\M^C(S),\N^C(S^{\rm op}))$ is complete. 
\end{enumerate}
\end{proposition}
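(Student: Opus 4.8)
The plan is to deduce statement (6) directly from the preceding parts of Proposition \ref{Prop0} together with the definition of completeness for a duality pair (namely, perfect plus symmetric). Recall that by definition $(\M,\N)$ complete means $(\M,\N)$ is perfect \emph{and} symmetric. So under the hypotheses of (6) we have both properties available for $(\M,\N)$, plus the assumption $S \in \M^C(S)$.

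First I would invoke part (4): since $(\M,\N)$ is perfect and $S \in \M^C(S)$, part (4) gives that $(\M^C(S),\N^C(S^{\rm op}))$ is perfect. Next I would invoke the symmetry half of Proposition \ref{Dualidad1}: since $(\M,\N)$ is symmetric, that proposition tells us $(\M^C(S),\N^C(S^{\rm op}))$ is symmetric as well. Combining these two conclusions, $(\M^C(S),\N^C(S^{\rm op}))$ is both perfect and symmetric, i.e. complete by definition. That is essentially the whole argument.

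The one point that requires a moment of care is making sure the hypothesis ``$S \in \M^C(S)$'' is exactly what part (4) needs (it is — part (4) is stated with precisely that assumption), and that nothing else in the definition of ``complete duality pair'' has been overlooked; since the excerpt defines \emph{complete} as \emph{perfect and symmetric}, there is no third condition to check. I do not anticipate any genuine obstacle here: the statement is a formal corollary of (4) and of Proposition \ref{Dualidad1}, so the ``hard part'' is really just correctly citing the two ingredients rather than any new computation. For completeness I would also remark that perfectness of $(\M^C(S),\N^C(S^{\rm op}))$ already yields, via (4) and the general facts recalled after the definition of duality pairs, that $(\M^C(S),[\M^C(S)]^{\perp_1})$ is a perfect (hence complete) cotorsion pair, though this is not needed for the statement itself.

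\begin{proof}
By definition, a complete duality pair is one that is both perfect and symmetric. Assume $(\M,\N)$ is complete and $S \in \M^C(S)$. Since $(\M,\N)$ is in particular perfect and $S \in \M^C(S)$, part (4) shows that $(\M^C(S),\N^C(S^{\rm op}))$ is perfect. On the other hand, since $(\M,\N)$ is symmetric, Proposition \ref{Dualidad1} shows that $(\M^C(S),\N^C(S^{\rm op}))$ is symmetric. Therefore $(\M^C(S),\N^C(S^{\rm op}))$ is perfect and symmetric, that is, complete.
\end{proof}
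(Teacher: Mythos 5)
Your proof is correct and follows exactly the same route as the paper, which simply cites part (4) together with Proposition \ref{Dualidad1}; you have merely spelled out the bookkeeping (complete = perfect + symmetric) that the paper leaves implicit.
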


\begin{proof} \mbox{}
\begin{enumerate}
\item Follows as in \cite[Prop. 3.6]{WuGao22}.

\item These closure properties follow by applying Lemma \ref{Lema1}, the facts that $\B_C (S)$ and  $\A_C(S^{\rm op})$ are closed under direct summands, direct products and direct limits (see \cite[Prop. 4.2]{HolmWhite07}), that the functors $\Hom_S(C,\sim)$ and $- \otimes_{S} C$ preserves direct limits and direct products, since $C \in \FP_\infty(S)$ (see Brown's \cite[Coroll. after Thm. 1]{Brown75}).

\item Follows from (2) and the properties of duality pairs (see \S \ref{sec:prelims}).

\item Follows from (1), (3) and the properties of duality pairs. 

\item The equivalences (a) $\Leftrightarrow$ (b) and (c) $\Leftrightarrow$ (d) follow as in \cite[Thms. 3.7 and 3.8 (1) $\Leftrightarrow$ (2)]{WuGao22}. Now suppose that $(\M,\N)$ is symmetric. We show (b) $\Leftrightarrow$ (d). We know that $(\A _C(R),\B_C(R^{\rm op}))$ is a complete duality pair, then the intersection pair $(\M \cap \A _C(R), \N \cap \B_C(R^{\rm op}))$ is a symmetric duality pair by \cite[Prop. 2.8]{HJ09}. So one can note that $\M \cap \A _C(R)$ is closed under kernels of epimorphisms if, and only if, $\N \cap \B_C(R^{\rm op})$ is closed under cokernels of monomorphisms, using the properties of $\Hom_{\mathbb{Z}}(-,\mathbb{Q/Z})$. 

\item Follows from (4) and Proposition \ref{Dualidad1}. 
\end{enumerate}
\end{proof}

\begin{remark}
In the previous proposition, (1) and (2) generalize \cite[Props. 3.6 and 3.12]{WuGao22}, (3) and (4) generalize \cite[Thm. 3.15 (1) and Coroll. 3.16 (1)]{WuGao22}, and (5) generalizes \cite[Thms. 3.7 \& 3.8]{WuGao22}.
\end{remark}

Another consequence of assuming that $_S C _R$ is faithfully semidualizing is to extend the equivalences appearing in Proposition \ref{Equivalencia1}. This is specified and proved in Proposition \ref{extended-equivalence} below. Before that, note that since $\B_C(S)$ is coresolving and $\A_C (S^{\op})$ is resolving, then the containments 
\[
[\M^C(S)]^\wedge_n \subseteq \B_C(S) \ \ \text{and} \ \ [\N^C(S^{\op})]^\vee_n \subseteq \A_C (S^{\op})
\] 
hold true for every $n \in \mathbb{Z}_{\geq 0}$, and are consequences of Lemma \ref{Lema1}. On the other hand, since ${}_S C_R$ is faithfully semidualizing, $\B_C(S)$ and $\A_C (S^{\op})$ are thick. Thus, we also get the containments 
\[
[\M \cap \A_C(R)]^\wedge_n \subseteq \A_C(R) \ \ \text{and} \ \ [\N \cap \B_C(R^{\rm op})]^\vee_n \subseteq \B_C(R^{\rm op})
\] 
for every $n \in \mathbb{Z}_{\geq 0}$.

\begin{proposition}[relative homological dimensions] \label{Resdim}
The inequalities 
\[
\resdim_{M \cap \A_C(R)}(M) \leq \resdim_{\M^{C}(S)}(C \otimes _R M)
\]
and
\[
\coresdim_{\N \cap \B_C(R^{\op})}(N) \leq \coresdim_{\N^C(S^{\op})}(\Hom_S(C,N))
\]
hold for every $M \in \Modu (R)$ and $N \in \Modu (R^{\op})$. If in addition $_S C _R$ is faithfully semidualizing, then the previous inequalities become equalities. 
\end{proposition}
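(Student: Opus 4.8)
The plan is to transport (co)resolutions back and forth along the restricted Foxby equivalences of Proposition~\ref{Equivalencia1}, with Lemma~\ref{Lema1} supplying the membership criteria. I would spell out the argument for the first inequality (and the equality refining it under the faithful hypothesis); the second inequality is obtained by the symmetric argument, now with the second Foxby equivalence and its quasi-inverse $\Hom_{R^{\mathrm{op}}}(C,\sim)$, interchanging resolutions and coresolutions, $\A_C$ and $\B_C$, and using that $\A_C(S^{\mathrm{op}})$ is resolving.

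\emph{The inequality.} One may assume $r:=\resdim_{\M^{C}(S)}(C\otimes_R M)$ is finite and fix an $\M^{C}(S)$-resolution $0\to W_r\to\cdots\to W_0\to C\otimes_R M\to 0$. By Lemma~\ref{Lema1}(1) each $W_i$ lies in $\B_C(S)$; splitting this resolution into short exact sequences and using that $\B_C(S)$ is coresolving, hence closed under cokernels of monomorphisms, an induction on $r$ places every syzygy — and in particular $C\otimes_R M$ itself — in $\B_C(S)$. Since $\Ext^{\geq 1}_S(C,\B_C(S))=0$, the functor $\Hom_S(C,\sim)$ carries short exact sequences with all terms in $\B_C(S)$ to short exact sequences, so applying it to the resolution yields an exact sequence
\[
0\to\Hom_S(C,W_r)\to\cdots\to\Hom_S(C,W_0)\to\Hom_S(C,C\otimes_R M)\to 0,
\]
in which each $\Hom_S(C,W_i)$ lies in $\M\cap\A_C(R)$ by Lemma~\ref{Lema1}(1) together with the first Foxby equivalence. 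Thus $\resdim_{\M\cap\A_C(R)}(\Hom_S(C,C\otimes_R M))\leq r$. To conclude I would identify $\Hom_S(C,C\otimes_R M)$ with $M$: as $C\otimes_R M\in\B_C(S)$ the evaluation map $\nu_{C\otimes_R M}$ is invertible, and the triangle identity $\nu_{C\otimes_R M}\circ(C\otimes_R\mu_M)=\id$ forces $C\otimes_R\mu_M$ to be invertible, so that for $M\in\A_C(R)$ one has $M\simeq\Hom_S(C,C\otimes_R M)$ and the inequality follows.

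\emph{The equality when $_SC_R$ is faithfully semidualizing.} Two things must be added. For the reverse inequality: if $n:=\resdim_{\M\cap\A_C(R)}(M)$ is finite then $M\in[\M\cap\A_C(R)]^{\wedge}_n\subseteq\A_C(R)$ by the containment recorded before the proposition, and in a length-$n$ $(\M\cap\A_C(R))$-resolution of $M$ all syzygies again lie in $\A_C(R)$ since $\A_C(R)$ is thick; hence $\Tor^{R}_{\geq 1}(C,\sim)$ vanishes on every syzygy, tensoring the resolution with $C$ stays exact, and this produces an $\M^{C}(S)$-resolution of $C\otimes_R M$ of length $\leq n$. Secondly, the reduction ``$M\in\A_C(R)$'' from the previous paragraph must be made automatic: if $\resdim_{\M^{C}(S)}(C\otimes_R M)$ is finite then $C\otimes_R M\in\B_C(S)$, so $M':=\Hom_S(C,C\otimes_R M)\in\A_C(R)$ and $C\otimes_R\mu_M$ is invertible; applying $C\otimes_R\sim$ to the four-term exact sequence built from $\mu_M\colon M\to M'$, namely $0\to\Ker\mu_M\to M\to M'\to\Coker\mu_M\to 0$, and using $\Tor^{R}_{\geq 1}(C,M')=0$, yields $C\otimes_R\Ker\mu_M=0=C\otimes_R\Coker\mu_M$; the natural isomorphism $(C\otimes_R X)^{+}\simeq\Hom_{R^{\mathrm{op}}}(C,X^{+})$ combined with $\mathsf{(fsd8)}$ then forces $\Ker\mu_M=0=\Coker\mu_M$, so $\mu_M$ is an isomorphism and $M\in\A_C(R)$. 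Combining the two inequalities gives the equalities.

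\emph{Main obstacle.} The only delicate point is this last identification: tensoring with $C$ always renders $\mu_M$ invertible once $C\otimes_R M$ is a Bass module, but detecting that $\mu_M$ was already invertible — equivalently that $M\in\A_C(R)$ — is exactly where the faithful semidualizing hypothesis enters, through $\mathsf{(fsd8)}$ and the character-module isomorphism above. Everything else is a routine transfer of short exact sequences across the Foxby equivalences, using Lemma~\ref{Lema1} together with the fact that $\A_C(R)$ and $\B_C(R^{\mathrm{op}})$ are thick in the faithful case.
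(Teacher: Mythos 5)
Your approach---transporting resolutions across the restricted Foxby equivalence of Proposition \ref{Equivalencia1} via Lemma \ref{Lema1} and the coresolving property of $\B_C(S)$, and using thickness of $\A_C(R)$ for the reverse inequality in the faithful case---is the same as the paper's. You even spot a subtlety that the paper's proof passes over: applying $\Hom_S(C,\sim)$ to an $\M^C(S)$-resolution of $C\otimes_R M$ produces an $(\M\cap\A_C(R))$-resolution of $\Hom_S(C,C\otimes_R M)$, and identifying that object with $M$ itself requires $M\in\A_C(R)$.

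There is, however, a gap in your argument for making ``$M\in\A_C(R)$'' automatic in the faithful case. You want to get $C\otimes_R\Ker\mu_M = 0 = C\otimes_R\Coker\mu_M$ in a single stroke by tensoring the four-term exact sequence $0\to\Ker\mu_M\to M\to M'\to\Coker\mu_M\to 0$ with $C$, using $\Tor^R_{\geq 1}(C,M')=0$. Since $C\otimes_R\sim$ is only right exact, this immediately yields $C\otimes_R\Coker\mu_M=0$ (because $C\otimes_R\mu_M$ is an isomorphism); but for $\Ker\mu_M$ the long exact $\Tor$-sequence for $0\to\Ker\mu_M\to M\to\Ima\mu_M\to 0$ only shows that $C\otimes_R\Ker\mu_M$ is a quotient of $\Tor^R_1(C,\Ima\mu_M)$, which is isomorphic to $\Tor^R_2(C,\Coker\mu_M)$ and is not yet known to vanish. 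The repair is to run the argument in two stages: from $C\otimes_R\Coker\mu_M=0$, your $(-)^+$-plus-$\mathsf{(fsd8)}$ device first gives $\Coker\mu_M=0$, so $\mu_M$ is surjective; only then is $0\to\Ker\mu_M\to M\to M'\to 0$ short exact, and now $\Tor^R_1(C,M')=0$ together with $C\otimes_R\mu_M$ being an isomorphism does give $C\otimes_R\Ker\mu_M=0$, whence $\Ker\mu_M=0$ by the same character-module step and $\mu_M$ is an isomorphism, so $M\simeq M'\in\A_C(R)$.
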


\begin{proof}
First, note by Lemma \ref{Lema1} and the fact that $\B_C (S)$ is coresolving, that any $\M^C(S)$-resolution of $C \otimes_R M$ of finite length is an exact complex with cycles in $\B_C(S)$. The previous, along with $\mathsf{(b1)}$ and Proposition \ref{Equivalencia1}, imply that the equivalence $\Hom_S(C,\sim)$ maps $\M^C(S)$-resolutions of $C \otimes_R M$ of length $n$ to $(\M \cap \A_C(R))$-resolutions of $M$ of the same length. Then, we have that 
\[
\resdim_{M \cap \A_C(R)}(M) \leq \resdim_{\M^{C}(S)}(C \otimes _R M).
\]

Suppose now that $_S C _R$ is faithfully semidualizing. Then any $(\M \cap \A_C(R))$-resolution of $M$ of finite length is an exact complex with cycles in $\A_C(R)$, since $\A_C(R)$ becomes thick. It follows that the equivalence $C \otimes_R \sim$ maps $(\M \cap \A_C(R))$-resolutions of $M$ of length $n$ to $\M^C(S)$-resolutions of $C \otimes_R M$ of length $n$. In other words, $C \otimes_R \sim$ is an exact functor from $[\M \cap \A_C(R)]^\wedge_n$ to $[\M^C(S)]^\wedge_n$. Then, we also have that 
\[
\resdim_{\M^{C}(S)}(C \otimes _R M) \leq \resdim_{M \cap \A_C(R)}(M).
\]

The inequality and equality for coresolution dimensions follow similarly. 
\end{proof}

\begin{proposition}[extended Foxby equivalences] \label{extended-equivalence}
If $_S C _R$ is faithfully semidualizing, then for every $n \in \mathbb{Z}_{\geq 0}$ there are equivalences of categories
\[
\begin{tikzpicture}[description/.style={fill=white,inner sep=2pt}] 
\matrix (m) [ampersand replacement=\&, matrix of math nodes, row sep=6em, column sep=3.5em, text height=1.25ex, text depth=0.25ex] 
{ 
{[\M \cap \A_C(R)]}^\wedge_n \& {[\M^C(S)]}^\wedge_n \& {[\N^C(S^{\rm op})]}^\vee_n \& {[\N \cap \B_C(R^{\rm op})]}^\vee_n \\
}; 
\path[->] 
(m-1-1) edge [bend left = 25] node[above] {\footnotesize$C \otimes_R \sim$} node[below] {\footnotesize$\sim$} (m-1-2)
(m-1-2) edge [bend left = 25] node[above] {\footnotesize$\sim$} node[below] {\footnotesize$\Hom_S(C,\sim)$} (m-1-1)
(m-1-3) edge [bend left = 25] node[above] {\footnotesize$- \otimes_{S} C$} node[below] {\footnotesize$\sim$} (m-1-4)
(m-1-4) edge [bend left = 25] node[above] {\footnotesize$\sim$} node[below] {\footnotesize$\Hom_{R^{\rm op}}(C,\sim)$} (m-1-3)
;
\end{tikzpicture} 
\]
\end{proposition}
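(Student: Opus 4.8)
The plan is to obtain both equivalences by restricting the two Foxby equivalences $C \otimes_R \sim \colon \A_C(R) \to \B_C(S)$ (quasi-inverse $\Hom_S(C,\sim)$) and $- \otimes_S C \colon \A_C(S^{\rm op}) \to \B_C(R^{\rm op})$ (quasi-inverse $\Hom_{R^{\rm op}}(C,\sim)$), using Proposition \ref{Equivalencia1}, Proposition \ref{Resdim}, and the containments recorded just before the statement: since ${}_S C_R$ is faithfully semidualizing, the four classes $\A_C(R)$, $\B_C(S)$, $\A_C(S^{\rm op})$, $\B_C(R^{\rm op})$ are thick, whence $[\M \cap \A_C(R)]^\wedge_n \subseteq \A_C(R)$, $[\M^C(S)]^\wedge_n \subseteq \B_C(S)$, $[\N^C(S^{\rm op})]^\vee_n \subseteq \A_C(S^{\rm op})$ and $[\N \cap \B_C(R^{\rm op})]^\vee_n \subseteq \B_C(R^{\rm op})$. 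Since the two pairs are handled symmetrically, I would write out the first one in detail and only indicate the dual changes for the second.

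For the first equivalence, the three things to check are: (i) $C \otimes_R \sim$ maps $[\M \cap \A_C(R)]^\wedge_n$ into $[\M^C(S)]^\wedge_n$; (ii) $\Hom_S(C,\sim)$ maps $[\M^C(S)]^\wedge_n$ into $[\M \cap \A_C(R)]^\wedge_n$; (iii) on these subcategories the two functors are still mutually quasi-inverse. For (i), a module $M \in [\M \cap \A_C(R)]^\wedge_n$ lies in $\A_C(R)$, and an $(\M \cap \A_C(R))$-resolution of $M$ of length $\leq n$ splits into short exact sequences all of whose syzygies lie in $\A_C(R)$ --- this is where thickness is used, exactly as in the proof of Proposition \ref{Resdim}; since $\Tor^R_{\geq 1}(C,-)$ vanishes on $\A_C(R)$ by $\mathsf{(a1)}$, the functor $C \otimes_R \sim$ keeps these sequences exact and splices them to an $\M^C(S)$-resolution of $C \otimes_R M$ of length $\leq n$. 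For (ii), symmetrically, $N \in [\M^C(S)]^\wedge_n$ lies in $\B_C(S)$, its $\M^C(S)$-resolution has syzygies in $\B_C(S)$, and $\Hom_S(C,\sim)$ --- exact on short exact sequences with terms in $\B_C(S)$ by $\mathsf{(b1)}$, and with values in $\M \cap \A_C(R)$ by Proposition \ref{Equivalencia1} --- produces an $(\M \cap \A_C(R))$-resolution of $\Hom_S(C,N)$ of the same length. For (iii), since $[\M \cap \A_C(R)]^\wedge_n \subseteq \A_C(R)$ and $[\M^C(S)]^\wedge_n \subseteq \B_C(S)$, the natural isomorphisms $\mu_M \colon M \to \Hom_S(C, C \otimes_R M)$ (condition $\mathsf{(a3)}$) and $\nu_N \colon C \otimes_R \Hom_S(C,N) \to N$ (condition $\mathsf{(b3)}$) of the ambient Foxby equivalence restrict verbatim; these restricted natural isomorphisms already exhibit the equivalence (and in particular show essential surjectivity, since any $N \in [\M^C(S)]^\wedge_n$ equals $C \otimes_R \Hom_S(C,N)$ with $\Hom_S(C,N) \in [\M \cap \A_C(R)]^\wedge_n$ by (ii)).

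The second equivalence is entirely dual: replace $\A_C(R), \B_C(S)$ by $\B_C(R^{\rm op}), \A_C(S^{\rm op})$, use the second Foxby equivalence $- \otimes_S C$ with quasi-inverse $\Hom_{R^{\rm op}}(C,\sim)$, invoke the thickness of $\A_C(S^{\rm op})$ and $\B_C(R^{\rm op})$ so that the cosyzygies of a bounded coresolution stay in these classes, use the $\Tor$- and $\Ext$-vanishing built into the defining conditions of $\A_C(S^{\rm op})$ and $\B_C(R^{\rm op})$ to keep the functors exact on the relevant coresolutions, and quote the coresolution statement of Proposition \ref{Resdim} together with the natural isomorphisms of the second Foxby equivalence. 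I do not expect a genuine obstacle; the only delicate point --- and the sole place faithful semidualizingness is actually needed --- is that all the (co)syzygies occurring in a bounded (co)resolution must remain inside the appropriate Auslander or Bass class, which is precisely the thickness supplied by $\mathsf{(fsd7)}$--$\mathsf{(fsd8)}$ via \cite[Coroll. 6.3]{HolmWhite07}. Without it one only gets the inequalities of Proposition \ref{Resdim}, and the functors need not restrict to the bounded-dimension subcategories.
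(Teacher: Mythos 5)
Your argument is correct and follows essentially the same route as the paper's proof: both hinge on the equality of relative (co)resolution dimensions from Proposition \ref{Resdim}, the thickness of the Auslander and Bass classes (forcing the (co)syzygies of bounded (co)resolutions to stay in those classes), and the restriction of the natural isomorphisms $\mu$ and $\nu$ of the ambient Foxby equivalences. The paper is terser --- it cites Proposition \ref{Resdim} outright for both inclusions and uses essential surjectivity of $C\otimes_R\sim$ to get $\Hom_S(C,M)\in[\M\cap\A_C(R)]^\wedge_n$, whereas you unfold the exactness argument on the resolutions directly; but these are presentational rather than substantive differences. One small refinement: the containments $[\M^C(S)]^\wedge_n\subseteq\B_C(S)$ and $[\N^C(S^{\rm op})]^\vee_n\subseteq\A_C(S^{\rm op})$ already hold without faithfulness (because $\B_C(S)$ is always coresolving and $\A_C(S^{\rm op})$ always resolving); thickness, hence faithfulness, is genuinely needed only for $[\M\cap\A_C(R)]^\wedge_n\subseteq\A_C(R)$ and $[\N\cap\B_C(R^{\rm op})]^\vee_n\subseteq\B_C(R^{\rm op})$, and for the exactness of $C\otimes_R\sim$ on bounded $(\M\cap\A_C(R))$-resolutions.
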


\begin{proof}
Since $\resdim_{M \cap \A_C(R)}(M) = \resdim_{\M^{C}(S)}(C \otimes _R M)$ by Proposition \ref{Resdim}, we have a functor $C \otimes_R \sim \colon {[\M \cap \A_C(R)]}^\wedge_n \to {[\M^C(S)]}^\wedge_n$. Now let us show that the functor $\Hom_S(C,\sim)$ maps ${[\M^C(S)]}^\wedge_n$ into ${[\M \cap \A_C(R)]}^\wedge_n$. Let $M \in {[\M^C(S)]}^\wedge_n \subseteq \B_C(S)$. Since $C \otimes_R \sim \colon \A_C(R) \to \B_C(S)$ is essentially surjective, there exists $A \in A_C(R)$ such that $M \simeq C \otimes_R A$. Then, $A \simeq \Hom_S(C,C\otimes_R A) \simeq \Hom_S(C,M)$ and by Proposition \ref{Resdim} we obtain
\[
\resdim_{[\M \cap \A_C(R)]}(\Hom_S(C,M)) = \resdim_{\M^C(S)}(M) \leq n.
\]
Hence, $C \otimes_R \sim$ defines an equivalence from ${[\M \cap \A_C(R)]}^\wedge_n$ to ${[\M^C(S)]}^\wedge_n$ with quasi-inverse $\Hom_S(C,\sim)$. The equivalences in the other diagram can be established by using an analogous procedure. 
\end{proof}

\begin{corollary}
If $_S C _R$ is faithfully semidualizing, then in the following diagram, the horizontal arrows are equivalences and the vertical arrows are inclusions of categories: 
\[
\begin{tikzpicture}[description/.style={fill=white,inner sep=2pt}] 
\matrix (m) [ampersand replacement=\&, matrix of math nodes, row sep=6em, column sep=3.5em, text height=1.25ex, text depth=0.25ex] 
{ 
\M \cap \A_C(R) \& \M^C(S) \& \N^C(S^{\rm op}) \& \N \cap \B_C(R^{\rm op}) \\
{[\M \cap \A_C(R)]}^\wedge_n \& {[\M^C(S)]}^\wedge_n \& {[\N^C(S^{\rm op})]}^\vee_n \& {[\N \cap \B_C(R^{\rm op})]}^\vee_n \\
\A_C(R) \& \B_C(S) \& \A_C(S^{\rm op}) \& \B_C(R^{\rm op}) \\
}; 
\path[->] 
(m-1-1) edge [bend left = 25] node[above] {\footnotesize$C \otimes_R \sim$} node[below] {\footnotesize$\sim$} (m-1-2)
(m-1-2) edge [bend left = 25] node[above] {\footnotesize$\sim$} node[below] {\footnotesize$\Hom_S(C,\sim)$} (m-1-1)
(m-1-3) edge [bend left = 25] node[above] {\footnotesize$- \otimes_{S} C$} node[below] {\footnotesize$\sim$} (m-1-4)
(m-1-4) edge [bend left = 25] node[above] {\footnotesize$\sim$} node[below] {\footnotesize$\Hom_{R^{\rm op}}(C,\sim)$} (m-1-3)
(m-2-1) edge [bend left = 25] node[above] {\footnotesize$C \otimes_R \sim$} node[below] {\footnotesize$\sim$} (m-2-2)
(m-2-2) edge [bend left = 25] node[above] {\footnotesize$\sim$} node[below] {\footnotesize$\Hom_S(C,\sim)$} (m-2-1)
(m-2-3) edge [bend left = 25] node[above] {\footnotesize$- \otimes_{S} C$} node[below] {\footnotesize$\sim$} (m-2-4)
(m-2-4) edge [bend left = 25] node[above] {\footnotesize$\sim$} node[below] {\footnotesize$\Hom_{R^{\rm op}}(C,\sim)$} (m-2-3)
(m-3-1) edge [bend left = 25] node[above] {\footnotesize$C \otimes_R \sim$} node[below] {\footnotesize$\sim$} (m-3-2)
(m-3-2) edge [bend left = 25] node[above] {\footnotesize$\sim$} node[below] {\footnotesize$\Hom_S(C,\sim)$} (m-3-1)
(m-3-3) edge [bend left = 25] node[above] {\footnotesize$- \otimes_{S} C$} node[below] {\footnotesize$\sim$} (m-3-4)
(m-3-4) edge [bend left = 25] node[above] {\footnotesize$\sim$} node[below] {\footnotesize$\Hom_{R^{\rm op}}(C,\sim)$} (m-3-3)
;
\path[>->]
(m-1-1) edge (m-2-1) (m-2-1) edge (m-3-1)
(m-1-2) edge (m-2-2) (m-2-2) edge (m-3-2)
(m-1-3) edge (m-2-3) (m-2-3) edge (m-3-3)
(m-1-4) edge (m-2-4) (m-2-4) edge (m-3-4)
;
\end{tikzpicture} 
\]
\end{corollary}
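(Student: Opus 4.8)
The plan is to assemble the diagram entirely from results already established, so that the only content of the corollary is the organization of previous statements. The horizontal equivalences require no new work: the top row is Proposition~\ref{Equivalencia1}, the middle row is Proposition~\ref{extended-equivalence}, and the bottom row consists of the two Foxby equivalences $C \otimes_R \sim \colon \A_C(R) \to \B_C(S)$ and $- \otimes_S C \colon \A_C(S^{\rm op}) \to \B_C(R^{\rm op})$ (with quasi-inverses $\Hom_S(C,\sim)$ and $\Hom_{R^{\rm op}}(C,\sim)$) recalled in Section~\ref{sec:prelims} from \cite[Prop.~4.1]{HolmWhite07}.

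For the vertical arrows from the top row to the middle row, I would use that for any class $\X$ closed under isomorphisms and any $X \in \X$ one has $\resdim_{\X}(X) = 0 \leq n$, since $0 \to X \xrightarrow{\id} X \to 0$ is an $\X$-resolution of length $0$; hence $\X \subseteq \X^\wedge_n$, and dually $\X \subseteq \X^\vee_n$. Applying this to $\M \cap \A_C(R)$, $\M^C(S)$, $\N^C(S^{\rm op})$ and $\N \cap \B_C(R^{\rm op})$ yields the four inclusions. For the vertical arrows from the middle row to the bottom row, I would invoke the containments recorded just before Proposition~\ref{extended-equivalence}: namely $[\M^C(S)]^\wedge_n \subseteq \B_C(S)$ and $[\N^C(S^{\rm op})]^\vee_n \subseteq \A_C(S^{\rm op})$, which follow from Lemma~\ref{Lema1} together with $\B_C(S)$ being coresolving and $\A_C(S^{\rm op})$ resolving; and $[\M \cap \A_C(R)]^\wedge_n \subseteq \A_C(R)$, $[\N \cap \B_C(R^{\rm op})]^\vee_n \subseteq \B_C(R^{\rm op})$, which use that $\A_C(R)$ and $\B_C(R^{\rm op})$ are thick, as guaranteed by the hypothesis that ${}_S C_R$ is faithfully semidualizing.

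Finally, for the commutativity of the six squares I would observe that every horizontal functor occurring in the diagram is a restriction of one and the same Foxby functor: the left-hand arrows of each row are restrictions of $C \otimes_R \sim$ and $\Hom_S(C,\sim)$, and the right-hand arrows are restrictions of $- \otimes_S C$ and $\Hom_{R^{\rm op}}(C,\sim)$. Since the vertical arrows are plain subcategory inclusions, each square commutes strictly, and the quasi-inverses match for the same reason. The only thing to verify is that each such restriction actually lands in the indicated subcategory, which is exactly what Propositions~\ref{Equivalencia1} and~\ref{extended-equivalence} together with the inclusions above provide. I expect no genuine obstacle; the proof is bookkeeping, the substance having been carried out in Propositions~\ref{Equivalencia1}, \ref{Resdim} and~\ref{extended-equivalence}.
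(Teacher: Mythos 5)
Your proposal is correct and matches the paper's intent exactly: the paper states this corollary without proof because it is the assembly of Propositions~\ref{Equivalencia1}, \ref{Resdim}, \ref{extended-equivalence}, the classical Foxby equivalences from \cite[Prop.~4.1]{HolmWhite07}, and the containments recorded just before Proposition~\ref{extended-equivalence}, with commutativity of the squares following automatically because all horizontal functors are restrictions of the same Foxby functors. Your observation that $\A_C(R)$ and $\B_C(R^{\rm op})$ (not merely $\B_C(S)$ and $\A_C(S^{\rm op})$) must be thick for the lower-left and lower-right vertical inclusions is in fact slightly more careful than the paper's phrasing, and is justified by the remark in the preliminaries that faithfully semidualizing bimodules make all four Auslander/Bass classes thick.
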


We close this section studying whether the Tor-orthogonality condition of a duality pair $(\M,\N)$ over $R$ can be transferred to the induced duality pair $(\M^C(S),\N^C(S^{\rm op}))$. Examples and non-examples of such duality pairs are provided in \cite{BecerrilPerez25}. The following is a corollary of \cite[Thm. 6.4]{HolmWhite07} that turns out to be a useful tool for some computations in \S \ref{sec:relGFGI}.

\begin{lemma}\label{lem:HolmWhite_for_MN}
For every $A, A' \in \mathcal{A}_C(R) \cap \mathcal{M}$ and $B, B' \in \mathcal{B}_C(R^{\rm op}) \cap \mathcal{N}$, there are the following natural isomorphisms for every $i \in \mathbb{Z}_{\geq 0}$:
\begin{enumerate}
\item $\Ext^i_S(C \otimes_R A', C \otimes_R A) \simeq \Ext^i_R(A',A)$.

\item $\Ext^i_{S^{\rm op}}(\Hom_{R^{\rm op}}(C,B),\Hom_{R^{\rm op}}(C,B')) \simeq \Ext^i_{R^{\rm op}}(B,B')$. 

\item $\Tor^S_i(\Hom_{R^{\rm op}}(C,B),C \otimes_R A) \simeq \Tor^R_i(B,A)$.
\end{enumerate}
\end{lemma}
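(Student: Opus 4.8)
The plan is to recognize that this lemma is a direct specialization of Holm--White's \cite[Thm. 6.4]{HolmWhite07}, which provides natural isomorphisms $\Ext^i_S(C\otimes_R A', C\otimes_R A)\simeq\Ext^i_R(A',A)$ for $A,A'\in\A_C(R)$, $\Ext^i_{S^{\rm op}}(\Hom_{R^{\rm op}}(C,B),\Hom_{R^{\rm op}}(C,B'))\simeq\Ext^i_{R^{\rm op}}(B,B')$ for $B,B'\in\B_C(R^{\rm op})$, and $\Tor^S_i(\Hom_{R^{\rm op}}(C,B),C\otimes_R A)\simeq\Tor^R_i(B,A)$ for $A\in\A_C(R)$ and $B\in\B_C(R^{\rm op})$. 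So the whole argument consists of observing that the hypotheses of that theorem are met: indeed $\A_C(R)\cap\M\subseteq\A_C(R)$ and $\B_C(R^{\rm op})\cap\N\subseteq\B_C(R^{\rm op})$ trivially. First I would state this containment explicitly, then invoke \cite[Thm. 6.4]{HolmWhite07} with $A,A'$ (resp.\ $B,B'$) chosen in these intersections, and record the three isomorphisms. One should also note that naturality is inherited verbatim, since restricting a natural transformation to a subcategory stays natural.

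For completeness I would briefly recall the shape of the proof of the underlying Holm--White result, in case the reader wants an idea of why it holds: one builds a projective resolution $P_\bullet\to A'$ in $\Modu(R)$; applies $C\otimes_R\sim$, which is exact on $\A_C(R)$ by condition $\mathsf{(a1)}$ (and $C\otimes_R P_k$ is a complex of modules in $\B_C(S)$ with the correct vanishing of $\Ext_S^{\geq 1}(C,\sim)$ by $\mathsf{(b1)}$), to obtain a resolution of $C\otimes_R A'$ by modules on which $\Hom_S(C\otimes_R(-),C\otimes_R A)$ computes the Ext groups; and then uses the adjunction isomorphism $\Hom_S(C\otimes_R P_k, C\otimes_R A)\simeq\Hom_R(P_k,\Hom_S(C,C\otimes_R A))\simeq\Hom_R(P_k,A)$, the last step by $\mathsf{(a3)}$. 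Comparing the two complexes computing $\Ext_S^\bullet(C\otimes_R A', C\otimes_R A)$ and $\Ext_R^\bullet(A',A)$ gives the isomorphism in part (1); parts (2) and (3) are dual and mixed variants using the second Foxby equivalence and conditions $\mathsf{(b1)}$--$\mathsf{(b3)}$. In our setting none of this machinery needs to be redone — we are simply restricting the domain.

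The only thing worth a sentence is that there is genuinely \emph{nothing to check beyond the containments}; the intersection with $\M$ or $\N$ plays no role in the isomorphisms themselves, it only matters later (in \S\ref{sec:relGFGI}) because we will want the orthogonal complements taken relative to these intersected classes. So I do not anticipate any obstacle: the main (trivial) point is just matching hypotheses. I would write the proof as:

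\begin{proof}
Since $\mathcal{A}_C(R)\cap\mathcal{M}\subseteq\mathcal{A}_C(R)$ and $\mathcal{B}_C(R^{\rm op})\cap\mathcal{N}\subseteq\mathcal{B}_C(R^{\rm op})$, the assertions follow immediately by applying \cite[Thm. 6.4]{HolmWhite07} to modules $A,A'\in\mathcal{A}_C(R)\cap\mathcal{M}$ and $B,B'\in\mathcal{B}_C(R^{\rm op})\cap\mathcal{N}$. The naturality of the isomorphisms is inherited from the naturality of those in \cite[Thm. 6.4]{HolmWhite07} upon restricting to these subcategories.
\end{proof}
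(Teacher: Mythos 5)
Your proposal is correct and takes exactly the paper's route: the paper itself introduces this lemma with the remark that it is ``a corollary of \cite[Thm.~6.4]{HolmWhite07}'' and supplies no further proof, so your observation that the statement follows from the trivial containments $\mathcal{A}_C(R)\cap\mathcal{M}\subseteq\mathcal{A}_C(R)$ and $\mathcal{B}_C(R^{\rm op})\cap\mathcal{N}\subseteq\mathcal{B}_C(R^{\rm op})$ together with that theorem is precisely the intended argument. Your brief recollection of the Holm--White mechanism (projective resolution, exactness on the Auslander class, adjunction, and condition $\mathsf{(a3)}$) is a nice bonus but not required.
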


The following is an immediate consequence of the previous lemma.

\begin{proposition}[inherited Tor-orthogonality]\label{prop:Tor-orthogonal}
The duality pair $(\M^C(S),\N^C(S^{\rm op}))$ is Tor-orthogonal if, and only if, the intersection pair $(\M \cap \A_C(R),\N \cap \B_C(R^{\rm op}))$ is Tor-orthogonal. In particular, if $(\M,\N)$ is Tor-orthogonal, then so is $(\M^C(S),\N^C(S^{\rm op}))$. 
\end{proposition}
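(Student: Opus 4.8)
The plan is to deduce this directly from Lemma \ref{lem:HolmWhite_for_MN}(3), which provides, for all $A \in \mathcal{A}_C(R) \cap \mathcal{M}$ and $B \in \mathcal{B}_C(R^{\rm op}) \cap \mathcal{N}$, a natural isomorphism $\Tor^S_i(\Hom_{R^{\rm op}}(C,B), C \otimes_R A) \simeq \Tor^R_i(B,A)$ for every $i \geq 0$. First I would recall that, by definition, the induced pair $(\M^C(S),\N^C(S^{\rm op}))$ is Tor-orthogonal precisely when $\Tor^S_{\geq 1}(\N^C(S^{\rm op}), \M^C(S)) = 0$, while the intersection pair $(\M \cap \A_C(R), \N \cap \B_C(R^{\rm op}))$ is Tor-orthogonal precisely when $\Tor^R_{\geq 1}(\N \cap \B_C(R^{\rm op}), \M \cap \A_C(R)) = 0$.

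For the forward-and-backward equivalence, I would use that every object of $\M^C(S)$ is, up to isomorphism, of the form $C \otimes_R A$ with $A \in \A_C(R) \cap \M$, and every object of $\N^C(S^{\rm op})$ is, up to isomorphism, of the form $\Hom_{R^{\rm op}}(C,B)$ with $B \in \B_C(R^{\rm op}) \cap \N$; this is exactly the defining description of the induced classes, and isomorphic modules have isomorphic $\Tor$ groups. Hence for any $M' \in \N^C(S^{\rm op})$ and $M \in \M^C(S)$, picking such $B$ and $A$, we get $\Tor^S_i(M',M) \simeq \Tor^S_i(\Hom_{R^{\rm op}}(C,B), C \otimes_R A) \simeq \Tor^R_i(B,A)$ by Lemma \ref{lem:HolmWhite_for_MN}(3). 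Conversely, given arbitrary $B \in \B_C(R^{\rm op}) \cap \N$ and $A \in \A_C(R) \cap \M$, the modules $\Hom_{R^{\rm op}}(C,B)$ and $C \otimes_R A$ genuinely lie in $\N^C(S^{\rm op})$ and $\M^C(S)$ by definition, so again $\Tor^R_i(B,A) \simeq \Tor^S_i(\Hom_{R^{\rm op}}(C,B), C \otimes_R A)$. Therefore $\Tor^S_{\geq 1}(\N^C(S^{\rm op}), \M^C(S)) = 0$ if and only if $\Tor^R_{\geq 1}(\N \cap \B_C(R^{\rm op}), \M \cap \A_C(R)) = 0$, which is the claimed equivalence. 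The final ``in particular'' clause is immediate: if $(\M,\N)$ is Tor-orthogonal, then a fortiori $(\M \cap \A_C(R), \N \cap \B_C(R^{\rm op}))$ is (its classes are smaller), so the induced pair is Tor-orthogonal.

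There is no real obstacle here; the content is entirely packaged in Lemma \ref{lem:HolmWhite_for_MN}(3). The only small point to be careful about is the bookkeeping on sides: $C \otimes_R \sim$ sends $R$-modules to $S$-modules while $\Hom_{R^{\rm op}}(C,\sim)$ sends $R^{\rm op}$-modules to $S^{\rm op}$-modules, so that $\Tor^S_i(-,-)$ is the correctly-typed bifunctor $\Modu(S^{\rm op}) \times \Modu(S) \to \Modu(\mathbb{Z})$; this matches the setup in Section \ref{sec:prelims}, and the cited lemma is already stated with these conventions, so nothing extra is needed.
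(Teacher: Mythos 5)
Your proof is correct and takes exactly the same route as the paper, which simply declares the proposition an immediate consequence of Lemma \ref{lem:HolmWhite_for_MN}. You have correctly unpacked that: item (3) of the lemma identifies $\Tor^S_i$ on the induced classes with $\Tor^R_i$ on the intersection classes, the essential-image description of $\M^C(S)$ and $\N^C(S^{\rm op})$ makes the identification exhaustive in both directions, and the ``in particular'' clause follows because the intersection classes are subclasses of $\M$ and $\N$.
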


\begin{example}
In general, the duality pair $(\mathcal{FF}^n_C(S),\mathcal{FI}^n_C(S^{\rm op}))$ is not Tor-orthogonal for $n > 1$. This follows by the previous proposition, \cite[Prop. 3.3]{WuGao22} and the fact that $(\mathcal{F}_n(R),\mathcal{I}_n(R^{\rm op}))$ is not Tor-orthogonal if $R$ is left $n$-coherent (by \cite[Thms. 5.5 \& 5.6]{BP17} and \cite[Ex. 3.6]{BecerrilPerez25}).
\end{example}

%%%%%%%%%%%%%%%%%%%%%%%%%%%%%%%%%%%%%%%%%%%%%%%%
%%%%%%%%%%%%%%%%%%%%%%%%%%%%%%%%%%%%%%%%%%%%%%%%
%%%%%%%%%%%%%%%%%%%%%%%%%%%%%%%%%%%%%%%%%%%%%%%%
%%%%%%%%%%%%%%%%%%%%%%%%%%%%%%%%%%%%%%%%%%%%%%%%

\section{Gorenstein flat and Gorenstein injective modules \\ from induced duality pairs}\label{sec:relGFGI}

In this section, we study homological and homotopical aspects, as well as duality relations via Foxby equivalences and $(-)^+$, of some versions of relative Gorenstein flat and relative Gorenstein injective modules induced by the duality pairs $(\M,\N)$ and $(\M^C(S),\N^C(S^{\rm op}))$. One part is motivated by the work of Cheng and Zhao \cite{ChengZhao24}. In their article, they propose the concept of Gorenstein $G_C$-$\text{FP}_n$-injective modules. These are defined as those $M \in \Modu (R)$ for which there exists an exact complex 
\[
Y_\bullet = \cdots \xrightarrow{\partial_2} W_1 \xrightarrow{\partial_1} W_0 \xrightarrow{\partial_0} I^0 \xrightarrow{\partial^0} I^1 \xrightarrow{\partial^1} \cdots
\]
such that $\Hom_R(H,Y_\bullet)$ is exact for every 
\[
H \in \mathcal{H}_C(\mathcal{I}_n(R)) = \{ \Hom_S(C,E) \ \text{:} \ E \in {}^{\perp_1}[\mathcal{I}_n(S)] \cap \mathcal{I}_n(S) \},
\]
and such that $W_k \in \mathcal{FI}^n_C(R)$ and $I^k \in \mathcal{I}_n(R)$ for every $k \in \mathbb{Z}_{\geq 0}$, with $M \simeq \Ker(\partial^0)$. We point out that these relative Gorenstein injective modules are defined from two classes coming from the complete duality pairs $(\mathcal{FF}^n_C(R^{\rm op}),\mathcal{FI}^n_C(R))$ and $(\mathcal{F}_n(S^{\rm op}),\mathcal{I}_n(S))$. On the other hand, some notions of Gorenstein flat and Gorenstien injective modules relative to duality pairs have been proposed recently, like for instance in \cite{WangYangZhu19} by Wang, Yang and Zhu, and in \cite{BecerrilPerez25}.  

One approach to relative Gorenstein injective objects is given in \cite[Def. 3.7]{BMS}. Given $\mathcal{Y}$ and $\mathcal{Z}$ classes of $R^{\rm op}$-modules, we say that $M \in \Modu (R^{\rm op})$ is $(\mathcal{Z,Y})$\emph{-Gorenstein injective} if $M \simeq \Ker(\partial^0)$ in an exact complex
\[
Y_\bullet = \cdots \to Y_1 \xrightarrow{\partial_1} Y_0 \xrightarrow{\partial_0} Y^0 \xrightarrow{\partial^0} Y^1 \xrightarrow{\partial^1} \cdots
\]
of $R^{\rm op}$-modules in $\mathcal{Y}$ such that $\Hom_{R^{\rm op}}(Z,Y_\bullet)$ is exact for every $Z \in \mathcal{Z}$. The class of $(\Z,\Y)$-Gorenstein injective $R^{\rm op}$-modules will be denoted by $\GI_{(\Z,\Y)}$. If $\Z = \Y$, we use the notation $\GI_{\Y}$. In particular, we can consider the classes $\GI_{\N^C(S^{\rm op})}$ and $\GI_{\N \cap \B_C(R^{\op})}$, and prove the following relation between them via Foxby equivalences.

From now on, we set 
\[
\mathfrak{M} := \mathcal{A}_C(R) \cap \mathcal{M} \ \ \text{and} \ \ \mathfrak{N} := \mathcal{B}_C(R^{\rm op}) \cap \mathcal{N}
\]
in order to simplify some of the upcoming notations.

\begin{proposition}[relative Gorenstein injectivity under the 2${}^{\rm nd}$ Foxby equivalence]\label{duality_Ginj}
If ${}_S C_R$ is faithfully semidualizing, then the following assertions hold for every $M \in \B_{C}(R^{\rm op})$ and $N \in \A_C(S^{\rm op})$:
\begin{enumerate}
\item $N \in \GI_{\N^C(S^{\rm op})}$ if, and only if $N \otimes_S C \in \GI_{\mathfrak{N}}$.

\item $M \in \GI_{\mathfrak{N}}$ if, and only if, $\Hom_{R^{\op}}(C,M) \in \GI_{\N^C(S^{\rm op})}$.
\end{enumerate} 
\end{proposition}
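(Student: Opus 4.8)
The plan is to leverage the extended Foxby equivalence between the second pair in Proposition~\ref{extended-equivalence} together with Lemma~\ref{lem:HolmWhite_for_MN}(2) to transport the defining complexes of relative Gorenstein injectivity back and forth. Since the two statements are equivalent via the adjoint isomorphisms $N \simeq \Hom_{R^{\rm op}}(C, N\otimes_S C)$ for $N \in \A_C(S^{\rm op})$ and $M \simeq \Hom_{R^{\rm op}}(C,M)\otimes_S C$ for $M\in\B_C(R^{\rm op})$, it suffices to prove one direction carefully, say~(1); the other then follows by substituting $M = N\otimes_S C$ and applying the quasi-inverse functor.

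First I would observe that $\mathfrak{N} = \B_C(R^{\rm op})\cap\N$ is coresolving (it is the intersection of the coresolving class $\B_C(R^{\rm op})$ with $\N$, which is coresolving by symmetry of the complete pair $(\mathfrak{M},\mathfrak{N})$, or directly from the hypotheses once $C$ is faithfully semidualizing so $\B_C(R^{\rm op})$ is thick) and, dually, $\N^C(S^{\rm op})\subseteq\A_C(S^{\rm op})$ is resolving; this is needed so that an exact complex $Y_\bullet$ built from objects of $\N^C(S^{\rm op})$ has all its cycles in $\A_C(S^{\rm op})$, and similarly for $\mathfrak{N}$ inside $\B_C(R^{\rm op})$. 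This cycle-containment is what allows the functor $-\otimes_S C$ (resp.\ $\Hom_{R^{\rm op}}(C,\sim)$) to be applied termwise to $Y_\bullet$ while preserving exactness: on the subcategory $\A_C(S^{\rm op})$ the functor $-\otimes_S C$ is exact and sends short exact sequences with all three terms in $\A_C(S^{\rm op})$ to short exact sequences in $\B_C(R^{\rm op})$, by the standard properties of the second Foxby equivalence (the vanishing $\Tor^S_{\geq 1}(\sim,C)=0$ on $\A_C(S^{\rm op})$). So given $N\in\GI_{\N^C(S^{\rm op})}$ with witnessing complex $Y_\bullet$ of objects of $\N^C(S^{\rm op})$ and $N\simeq\Ker(\partial^0)$, the complex $Y_\bullet\otimes_S C$ is exact, consists of objects of $\mathfrak{N}$ (by Proposition~\ref{Equivalencia1}), and has $N\otimes_S C\simeq\Ker(\partial^0\otimes_S C)$.

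It then remains to check the orthogonality condition: that $\Hom_{R^{\rm op}}(Z', Y_\bullet\otimes_S C)$ is exact for every $Z'\in\mathfrak{N}$. Writing $Z' \simeq \Hom_{R^{\rm op}}(C,B)\otimes_S C$ is the wrong move; instead I would use that each $Z'\in\mathfrak{N}=\B_C(R^{\rm op})\cap\N$ satisfies $Z'\simeq C\otimes$\dots no---rather, use that $\Hom_{R^{\rm op}}(C,Z')\in\N^C(S^{\rm op})$ by Lemma~\ref{Lema1}(2) applied on $S^{\rm op}$ (equivalently, $Z' = \Hom_{R^{\rm op}}(C,Z')\otimes_S C$ since $Z'\in\B_C(R^{\rm op})$), and then invoke the adjunction isomorphism of complexes
\[
\Hom_{R^{\rm op}}(Z', Y_\bullet\otimes_S C) \;\simeq\; \Hom_{R^{\rm op}}\big(\Hom_{R^{\rm op}}(C,Z')\otimes_S C,\, Y_\bullet\otimes_S C\big)\;\simeq\;\Hom_{S^{\rm op}}\big(\Hom_{R^{\rm op}}(C,Z'),\,Y_\bullet\big),
\]
where the last isomorphism is precisely Lemma~\ref{lem:HolmWhite_for_MN}(2) in degree $0$, applied degreewise (this is where faithful semidualizingness is used, so that all cycles stay in the Auslander/Bass classes and the lemma applies). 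Since $\Hom_{R^{\rm op}}(C,Z')\in\N^C(S^{\rm op})$ and $Y_\bullet$ is a witnessing complex for $N$, the right-hand complex is exact, hence so is the left-hand one. This establishes the ``only if'' of~(1); the ``if'' direction is symmetric, applying $\Hom_{R^{\rm op}}(C,\sim)$ to a witnessing complex for $N\otimes_S C\in\GI_{\mathfrak{N}}$, using that $\Hom_{R^{\rm op}}(C,\sim)$ is exact on $\B_C(R^{\rm op})$ and that the cycles of the complex land in $\B_C(R^{\rm op})$ by coresolvingness of $\mathfrak{N}$, together with $N\simeq\Hom_{R^{\rm op}}(C,N\otimes_S C)$. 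The main obstacle I anticipate is bookkeeping the cycle-containment arguments so that Lemma~\ref{lem:HolmWhite_for_MN} and the exactness of the Foxby functors genuinely apply degreewise to the unbounded complex $Y_\bullet$---i.e.\ verifying that every syzygy and cosyzygy lies in the relevant thick class---rather than any single conceptual difficulty; once that is in place, the equivalence is a formal consequence of the adjunction and the $\Ext$-isomorphism.
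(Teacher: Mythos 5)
Your proposal follows essentially the same route as the paper: identify the cycles of the witnessing complex as lying in $\A_C(S^{\rm op})$ (resp.\ $\B_C(R^{\rm op})$), use exactness of $-\otimes_S C$ on $\A_C(S^{\rm op})$ (resp.\ of $\Hom_{R^{\rm op}}(C,\sim)$ on $\B_C(R^{\rm op})$) to push the complex across the second Foxby equivalence termwise, and transport the orthogonality condition via the degree-zero case of Lemma~\ref{lem:HolmWhite_for_MN}(2), exactly as in the paper; part~(2) is then deduced from part~(1) via $M \simeq \Hom_{R^{\rm op}}(C,M)\otimes_S C$, again as in the paper. One caveat worth flagging: your claim that $\mathfrak{N}$ is coresolving (``by symmetry of the complete pair $(\mathfrak{M},\mathfrak{N})$'') and that $\N^C(S^{\rm op})$ is resolving is unwarranted under the hypotheses of this proposition --- $(\mathcal{M,N})$ is only assumed to be a duality pair, not complete or symmetric --- and these side claims are in fact not what drives the cycle-containment; what one actually needs (and what the paper uses) is that $\A_C(S^{\rm op})$ and $\B_C(R^{\rm op})$ are thick when $C$ is faithfully semidualizing, together with the hypothesis that the specific cycle $N$ (resp.\ $N\otimes_S C$) lies in $\A_C(S^{\rm op})$ (resp.\ $\B_C(R^{\rm op})$). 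You do mention thickness, so the argument as you intend it does go through, but the resolving/coresolving justifications should be replaced by this thickness argument.
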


\begin{proof} 
Let us first show (1). For the ``only if'' part, let $N \in \A_C(S^{\rm op}) \cap \GI_{\N^C(S^{\rm op})}$. Then, $N \simeq \Ker(\partial^0)$ in an exact complex $Y_\bullet$ with components in $\N^C(S^{\rm op})$ such that $\Hom_{S^{\rm op}}(Y, Y_\bullet)$ is exact for all $Y \in \N^C(S^{\op})$. By Lemma \ref{Lema1} (2), $Y_\bullet$ is an exact complex with components in $\A_C(S^{\rm op})$ and $\Ker(\partial^0) \in \A_C(S^{\rm op})$. On the other hand, since ${}_{R^{\rm op}}C_{S^{\rm op}}$ is faithfully semidualizing, we have that $\A_C(S^{\rm op})$ is thick, and so $Y_\bullet$ is an exact complex with cycles in $\A_C(S^{\rm op})$. It then follows by $\mathsf{(a1)}$ and Proposition \ref{Equivalencia1} that $Y_\bullet \otimes_S C$ is an exact complex with components in $\mathfrak{N}$. It particular, $N \otimes_S C \simeq \Ker(\partial^0 \otimes_S C)$. It remains to show that $\Hom_{R^{\rm op}}(B,Y_\bullet \otimes_S C)$ is exact for every $B \in \mathfrak{N}$. By Lemma \ref{lem:HolmWhite_for_MN} (2) and the 2${}^{\rm nd}$ Foxby equivalence, we have the natural isomorphism $\Hom_{R^{\rm op}}(B,Y_\bullet \otimes_S C) \simeq \Hom_{S^{\rm op}}(\Hom_{R^{\rm op}}(C,B),Y_\bullet)$, where $\Hom_{S^{\rm op}}(\Hom_{R^{\rm op}}(C,B),Y_\bullet)$ is exact since $\Hom_{R^{\rm op}}(C,B) \in \N^C(S^{\rm op})$.

For the ``if'' part, suppose $N \otimes_S C \in \GI_{\mathfrak{N}}$. Then, $N \otimes_S C \simeq \Ker(\partial^0)$ in an exact complex $B_\bullet$ with components in $\mathfrak{N}$ such that $\Hom_{R^{\op}}(B,B_\bullet)$ is exact for all $B \in \mathfrak{N}$. On the other hand, since $\B_C(R^{\rm op})$ is thick, and $N \otimes_S C \in \B_C(R^{\rm op})$, we have that $B_\bullet$ is an exact complex with cycles in $\B_C(R^{\rm op})$, and so $\Hom_{R ^{\op}}(C,B_\bullet)$ is exact by $\mathsf{(b1)}$. Also, we get $N \simeq \Hom_{R ^{\op}} (C, N \otimes_S C) \simeq \Ker(\Hom_{R^{\rm op}}(C,\partial^0))$ since $N \in \A _C (S^{\op})$. It remains to show that $\Hom_{S^{\rm op}}(\Hom_{R^{\rm op}}(C,B),\Hom_{R ^{\op}}(C,B_\bullet))$ is exact for every $B \in \mathfrak{N}$, but this follows as in the ``only if'' part. 

For part (2), simply observe that $M \simeq \Hom_{R^{\rm op}}(C,M) \otimes_S C$, and so by part (1) we have that $M \in \GI_{\mathfrak{N}}$, if and only if, $\Hom_{R^{\op}}(C, M) \in \GI_{\N^C(S^{\rm op})}$.
\end{proof}

The following is immediate from the previous proposition.

\begin{corollary}[Gorenstein injective Foxby equivalence] \label{Gorenstein-injective-equivalence}
If $_S C _R$ is faithfully semidualizing, then there is an equivalence of categories
\[
\begin{tikzpicture}[description/.style={fill=white,inner sep=2pt}] 
\matrix (m) [ampersand replacement=\&, matrix of math nodes, row sep=6em, column sep=3.5em, text height=1.25ex, text depth=0.25ex] 
{ 
\A_C(S^{\op}) \cap \GI_{\N^C(S^{\rm op})} \&  \GI_{\mathfrak{N}} \cap \B_C(R^{\op}) \\
}; 
\path[->] 
(m-1-1) edge [bend left = 25] node[above] {\footnotesize$- \otimes_S C$} node[below] {\footnotesize$\sim$} (m-1-2)
(m-1-2) edge [bend left = 25] node[above] {\footnotesize$\sim$} node[below] {\footnotesize$\Hom_{R^{\rm op}}(C,\sim)$} (m-1-1)
;
\end{tikzpicture} 
\]
\end{corollary}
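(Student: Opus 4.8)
The plan is to obtain the corollary as an immediate consequence of the second Foxby equivalence together with Proposition \ref{duality_Ginj}. Recall that $- \otimes_S C \colon \A_C(S^{\op}) \to \B_C(R^{\op})$ is an equivalence of categories with quasi-inverse $\Hom_{R^{\op}}(C,\sim)$, the unit and counit being the natural isomorphisms $\mu_{(-)}$ and $\nu_{(-)}$ coming from conditions $\mathsf{(a3)}$ and $\mathsf{(b3)}$. All that must be checked is that this equivalence restricts to the full subcategories appearing in the statement.

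First I would verify that $- \otimes_S C$ carries $\A_C(S^{\op}) \cap \GI_{\N^C(S^{\op})}$ into $\GI_{\mathfrak{N}} \cap \B_C(R^{\op})$: if $N$ lies in the former, then $N \otimes_S C \in \B_C(R^{\op})$ by the second Foxby equivalence, and $N \otimes_S C \in \GI_{\mathfrak{N}}$ by Proposition \ref{duality_Ginj}(1). Symmetrically, $\Hom_{R^{\op}}(C,\sim)$ carries $\GI_{\mathfrak{N}} \cap \B_C(R^{\op})$ into $\A_C(S^{\op}) \cap \GI_{\N^C(S^{\op})}$: for $M$ in the former, $\Hom_{R^{\op}}(C,M) \in \A_C(S^{\op})$ by the second Foxby equivalence, and $\Hom_{R^{\op}}(C,M) \in \GI_{\N^C(S^{\op})}$ by Proposition \ref{duality_Ginj}(2). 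Since a natural isomorphism between functors restricts to any full subcategory, the isomorphisms $\mu$ and $\nu$ restrict to these subcategories, so the two restricted functors are mutually quasi-inverse, and we obtain the asserted equivalence of categories.

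I do not expect any genuine obstacle here, as all the substantive work was carried out in Proposition \ref{duality_Ginj}; the only point worth care is that both parts of that proposition are used — part (1) for the functor $- \otimes_S C$ and part (2) for its quasi-inverse — and that intersecting with $\A_C(S^{\op})$, respectively $\B_C(R^{\op})$, in the statement is precisely what makes the hypotheses of Proposition \ref{duality_Ginj} available on each side.
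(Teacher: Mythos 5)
Your proof is correct and follows the same route the paper indicates (the paper simply states the corollary is immediate from Proposition \ref{duality_Ginj}); you have supplied the straightforward details of why the second Foxby equivalence restricts, using part (1) for $-\otimes_S C$ and part (2) for $\Hom_{R^{\rm op}}(C,\sim)$.
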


The previous proposition and corollary are our relative versions of \cite[Lems. 4.2, 4.4, Props. 4.3, 4.5, Thm. 4.6]{ChengZhao24}.

Regarding relative Gorenstein flat modules, if we are given $\mathcal{X} \subseteq \Modu (S)$ and $\mathcal{Y} \subseteq \Modu (S^{\rm op})$, recall from \cite[Def. 2.1]{WangYangZhu19} that an $S$-module $M$ is \emph{Gorenstein $(\mathcal{X,Y})$-flat} if there exists an exact complex
\[
X_\bullet = \cdots \to X_1 \xrightarrow{\partial_1} X_0 \xrightarrow{\partial_0} X^0 \xrightarrow{\partial^0} X^1 \xrightarrow{\partial^1} \cdots
\]
of modules in $\mathcal{X}$ such that $Y \otimes_S X_\bullet$ is exact for every $Y \in \mathcal{Y}$ and $M \simeq \Ker(\partial^0)$. The class of Gorenstein $(\mathcal{X,Y})$-flat $S$-modules will be denoted by $\mathcal{GF}_{(\mathcal{X,Y})}$. 

We now show the Gorenstein flat version of the previous two results.

\begin{proposition}[relative Gorenstein flatness under the 1${}^{\rm st}$ Foxby equivalence] \label{GF-Equiv}
If ${}_S C_R$ is faithfully semidualizing, then the following assertions hold for every $M \in \B_{C}(S)$ and $N \in \A _C (R)$.
\begin{enumerate}
\item $M \in \GF_{(\M^C(S), \N^C(S^{\rm op}))}$ if, and only if, $\Hom_S(C, M) \in \GF_{(\mathfrak{M,N})}$.

\item $N \in \GF_{(\mathfrak{M,N})}$ if, and only if, $C \otimes_R N \in \GF_{(\M^C(S), \N^C(S^{\rm op}))}$.
\end{enumerate}
\end{proposition}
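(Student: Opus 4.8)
The statement is the Gorenstein-flat analogue of Proposition~\ref{duality_Ginj}, and the strategy is to mirror that proof verbatim, swapping $\Hom$-exactness for $\otimes$-exactness and using the first Foxby equivalence in place of the second. I will prove (1) first, and then derive (2) from (1) as in the injective case, using that $N \simeq \Hom_S(C, C\otimes_R N)$ for $N \in \A_C(R)$ together with the containment $C\otimes_R N \in \B_C(S)$.

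For the ``only if'' direction of (1), take $M \in \B_C(S)\cap \GF_{(\M^C(S),\N^C(S^{\rm op}))}$, so $M \simeq \Ker(\partial^0)$ in an exact complex $X_\bullet$ with components in $\M^C(S)$ such that $Y\otimes_S X_\bullet$ is exact for all $Y \in \N^C(S^{\rm op})$. By Lemma~\ref{Lema1}(1), $X_\bullet$ is a complex with components in $\B_C(S)$; since ${}_SC_R$ is faithfully semidualizing, $\B_C(S)$ is thick, so $X_\bullet$ has all its cycles in $\B_C(S)$, and $\Ker(\partial^0)=M \in \B_C(S)$ already. Then $\mathsf{(b1)}$ and Proposition~\ref{Equivalencia1} (i.e.\ exactness of the restricted Foxby equivalence on complexes with cycles in $\B_C(S)$) give that $\Hom_S(C,X_\bullet)$ is an exact complex with components in $\mathfrak M$, with $\Hom_S(C,M) \simeq \Ker(\Hom_S(C,\partial^0))$. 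It remains to check that $B\otimes_R \Hom_S(C,X_\bullet)$ is exact for every $B \in \mathfrak N$; by Lemma~\ref{lem:HolmWhite_for_MN}(3) (applied to the components of $X_\bullet$, which lie in $\M^C(S)$, so that each is of the form $C\otimes_R A$ with $A \in \mathfrak M$) and the first Foxby equivalence, there is a natural isomorphism of complexes $B\otimes_R \Hom_S(C,X_\bullet) \simeq \Hom_{R^{\rm op}}(C,B)\otimes_S X_\bullet$, and the right-hand complex is exact because $\Hom_{R^{\rm op}}(C,B) \in \N^C(S^{\rm op})$. For the ``if'' direction, suppose $\Hom_S(C,M) \in \GF_{(\mathfrak M,\mathfrak N)}$, witnessed by an exact complex $A_\bullet$ with components in $\mathfrak M$ and $B\otimes_R A_\bullet$ exact for all $B \in \mathfrak N$, with $\Hom_S(C,M)\simeq \Ker(\partial^0)$. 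Since $\A_C(R)$ is thick and $\Hom_S(C,M) \in \A_C(R)$, the complex $A_\bullet$ has all cycles in $\A_C(R)$, so $\mathsf{(a1)}$ and Proposition~\ref{Equivalencia1} give that $C\otimes_R A_\bullet$ is exact with components in $\M^C(S)$; moreover $M \simeq C\otimes_R \Hom_S(C,M) \simeq \Ker(C\otimes_R\partial^0)$ because $M \in \B_C(S)$. The required exactness of $Y\otimes_S (C\otimes_R A_\bullet)$ for $Y \in \N^C(S^{\rm op})$ follows from the same natural isomorphism from Lemma~\ref{lem:HolmWhite_for_MN}(3) used in the other direction.

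For (2): given $N \in \A_C(R)$, we have $C\otimes_R N \in \B_C(S)$ (first Foxby equivalence) and $\Hom_S(C, C\otimes_R N) \simeq N$ by $\mathsf{(a3)}$, so applying part~(1) to $M := C\otimes_R N$ yields $C\otimes_R N \in \GF_{(\M^C(S),\N^C(S^{\rm op}))}$ iff $\Hom_S(C, C\otimes_R N) \simeq N \in \GF_{(\mathfrak M,\mathfrak N)}$, which is exactly the claim.

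The only genuinely delicate point is verifying that the relevant totalizations really are exact \emph{complexes} and that the natural isomorphisms of Lemma~\ref{lem:HolmWhite_for_MN}(3) and Proposition~\ref{Equivalencia1} are natural in a way compatible with the differentials, so that applying a functor degreewise to a complete resolution again produces a complete resolution of the correct type; this is where faithful semidualizing is essential, because it guarantees $\A_C(R)$ and $\B_C(S)$ are thick and hence that a finite-or-infinite exact complex with components in $\M^C(S)$ has \emph{all} its cycles (not just the distinguished one) in $\B_C(S)$, which is what lets us feed the complex into the Foxby equivalence and invoke $\mathsf{(b1)}$/$\mathsf{(a1)}$ to preserve exactness. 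Once that bookkeeping is in place, everything else is a routine transcription of the argument for Proposition~\ref{duality_Ginj}.
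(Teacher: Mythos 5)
Your proof is correct and matches the paper's argument essentially step for step: same reduction to part (1), same use of thickness of $\B_C(S)$ and $\A_C(R)$ to conclude that the complexes have all cycles in the Bass/Auslander classes, same appeal to $\mathsf{(b1)}$/$\mathsf{(a1)}$ plus Proposition~\ref{Equivalencia1} to preserve exactness and land the components in the right class, and the same componentwise invocation of Lemma~\ref{lem:HolmWhite_for_MN}(3) together with the first Foxby equivalence to transport the $\otimes$-exactness condition. The only cosmetic difference is that you spell out the derivation of (2) from (1), which the paper dismisses as immediate.
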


\begin{proof}
Let us only show part (1), as part (2) is an immediate consequence of (1). For the ``only if'' part, let $M \in \GF_{(\M^C(S), \N^C(S^{\rm op}))}$. Then, $M \simeq \Ker(\partial^0)$ in an exact complex 
\[
X_\bullet = \cdots \to X_1 \xrightarrow{\partial_1} X_0 \xrightarrow{\partial_0} X^0 \xrightarrow{\partial^0} X^1 \xrightarrow{\partial^1} \cdots
\] 
where $X_k, X^k \in \M^C(S)$ for every $k \in \mathbb{Z}_{\geq 0}$, such that $N \otimes_S X_\bullet$ is exact for every $N \in \N^C(S^{\rm op})$. Note that since $M, X_k, X^k \in \B_C(S)$ for every $k \in \mathbb{Z}_{\geq 0}$, and $\B_C(S)$ is thick, $X_\bullet$ is an exact complex with cycles in $\B_C(S)$, and so $\Hom_S(C,X_\bullet)$ is an exact complex by $\mathsf{(b1)}$, and with components in $\mathfrak{M}$ by Proposition \ref{Equivalencia1}. Moreover, $\Hom_S(C,M) = \Ker(\Hom_S(C,\partial^0))$, and hence it suffices to show that $B \otimes_R \Hom_S(C,X_\bullet)$ is exact for every $B \in \mathfrak{N}$. For such $B$, we know that $\Hom _{R^{\op}}(C, B) \in \N^C(S^{\rm op})$. Then, by Lemma \ref{lem:HolmWhite_for_MN} (3) we have
\begin{align*}
B \otimes_R \Hom_S(C,X_\bullet) & \simeq [\Hom _{R^{\op}}(C, B)] \otimes_S [C \otimes_R \Hom_S(C,X_\bullet)] \\
& \simeq [\Hom _{R^{\op}}(C, B)] \otimes_S X_\bullet.
\end{align*}
Since $[\Hom _{R^{\op}}(C, B)] \otimes_S X_\bullet$ is exact, so is $B \otimes_R \Hom_S(C,X_\bullet)$.

For the ``if'' part, suppose that $\Hom_S(C, M) \in \GF_{(\mathfrak{M,N})}$, that is, $\Hom_S(C, M) \simeq \Ker(\partial^0)$ in an exact complex 
\[
A_\bullet = \cdots \to A_1 \xrightarrow{\partial_1} A_0 \xrightarrow{\partial_0} A^0 \xrightarrow{\partial^0} A^1 \xrightarrow{\partial^1} \cdots
\]
with components in $\mathfrak{M}$ and such that $B \otimes_R A_\bullet$ is exact for every $B \in \mathfrak{N}$. Note that $A_\bullet$ is an exact complex with cycles in $\A_C(R)$, and so $C \otimes_R A_\bullet$ is exact with components in $\M^C(S)$ and $M \simeq C \otimes_R \Hom_S(C, M) \simeq \Ker(C \otimes_R \partial^0)$. Proceeding as in the ``only if'' part, one can show that $N \otimes_S [C \otimes_R A_\bullet]$ is exact for every $N \in \mathcal{N}^C(S^{\rm op})$.
\end{proof}

The following is immediate.

\begin{corollary}[Gorenstein flat Foxby equivalence] \label{Gorenstein-flat-equivalence}
If $_S C _R$ is faithfully semidualizing, then there is an equivalence of categories
\[
\begin{tikzpicture}[description/.style={fill=white,inner sep=2pt}] 
\matrix (m) [ampersand replacement=\&, matrix of math nodes, row sep=6em, column sep=5em, text height=1.25ex, text depth=0.25ex] 
{ 
\A_C(R) \cap \GF_{(\mathfrak{M,N})}  \&  \GF_{(\M^C(S), \N^C(S^{\rm op}))} \cap \B_C(S) \\
}; 
\path[->] 
(m-1-1) edge [bend left = 15] node[above] {\footnotesize$C \otimes_R \sim$} node[below] {\footnotesize$\sim$} (m-1-2)
(m-1-2) edge [bend left = 15] node[above] {\footnotesize$\sim$} node[below] {\footnotesize$\Hom_S(C,\sim)$} (m-1-1)
;
\end{tikzpicture} 
\]
\end{corollary}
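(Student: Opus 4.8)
The plan is to deduce Corollary \ref{Gorenstein-flat-equivalence} directly from Proposition \ref{GF-Equiv} together with the first Foxby equivalence $C \otimes_R \sim \colon \A_C(R) \to \B_C(S)$ and its quasi-inverse $\Hom_S(C,\sim)$. Concretely, I would first observe that the two functors in the statement are already mutually quasi-inverse equivalences when restricted to $\A_C(R)$ and $\B_C(S)$; the only thing to verify is that they carry the class $\A_C(R) \cap \GF_{(\mathfrak{M,N})}$ into $\GF_{(\M^C(S),\N^C(S^{\rm op}))} \cap \B_C(S)$ and back again.

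For the forward direction, start with $N \in \A_C(R) \cap \GF_{(\mathfrak{M,N})}$. Since $N \in \A_C(R)$, the first Foxby equivalence gives $C \otimes_R N \in \B_C(S)$, and Proposition \ref{GF-Equiv}(2) (using that $N \in \A_C(R)$) yields $C \otimes_R N \in \GF_{(\M^C(S),\N^C(S^{\rm op}))}$; hence $C \otimes_R N$ lands in the right-hand class. For the reverse direction, take $M \in \B_C(S) \cap \GF_{(\M^C(S),\N^C(S^{\rm op}))}$. Since $M \in \B_C(S)$, the first Foxby equivalence gives $\Hom_S(C,M) \in \A_C(R)$, and Proposition \ref{GF-Equiv}(1) (using $M \in \B_C(S)$) gives $\Hom_S(C,M) \in \GF_{(\mathfrak{M,N})}$; so $\Hom_S(C,M)$ lies in the left-hand class. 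Finally, the natural isomorphisms $N \simeq \Hom_S(C, C \otimes_R N)$ for $N \in \A_C(R)$ (from $\mathsf{(a3)}$) and $M \simeq C \otimes_R \Hom_S(C,M)$ for $M \in \B_C(S)$ (from $\mathsf{(b3)}$), which are exactly the unit and counit of the first Foxby equivalence, restrict to natural isomorphisms on these subcategories, establishing that the two restricted functors are quasi-inverse to each other.

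I do not expect a genuine obstacle here: the corollary is a formal packaging of Proposition \ref{GF-Equiv} plus the already-established first Foxby equivalence, much as Corollary \ref{Gorenstein-injective-equivalence} was deduced from Proposition \ref{duality_Ginj}. The only mild point of care is bookkeeping of which half of Proposition \ref{GF-Equiv} applies to which direction, and checking that the ambient isomorphisms $\mu_N$ and $\nu_M$ are compatible with the additional Gorenstein-flat condition — but this is automatic since those isomorphisms are natural and the Gorenstein-flat classes are closed under isomorphism. Thus the proof amounts to a single sentence invoking Proposition \ref{GF-Equiv} and the restriction of the first Foxby equivalence, which is presumably why the authors write ``The following is immediate.''
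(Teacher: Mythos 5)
Your proof is correct and is exactly what the paper intends by calling the corollary ``immediate'': the two directions of Proposition \ref{GF-Equiv} show that $C \otimes_R \sim$ and $\Hom_S(C,\sim)$ carry the respective intersections into each other, and the unit and counit of the first Foxby equivalence restrict to give the quasi-inverse property. No further comment is needed.
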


Given $\X \subseteq \Modu (S)$ and $\Y \subseteq \Modu (S^{\rm op})$, in \cite{WangYangZhu19} the authors find some sufficient conditions under which $\GF_{(\X,\Y)}$ is closed under extensions and forms the class of cofibrant objects of an abelian model structure. Namely, if $(\X,\Y)$ is a Tor-orthogonal complete duality pair over $S$ with $\X$ resolving, then $\GF_{(\X,\Y)}$ is resolving and there exists a hereditary abelian model structure on $\Modu (S)$ such that $\GF_{(\X,\Y)}$ (resp., $\X$) is the class of (resp., trivially) cofibrant objects, and $\X^\perp$ (resp., $[\GF_{(\X,\Y)}]^\perp$) is the class of (resp., trivially) fibrant objects. We obtain the following result concerning homotopical aspects of $\GF_{(\M^C(S),\N^C(S^{\rm op}))}$.

\begin{corollary}[homotopical aspects of relative Gorenstein flats] \label{coro:first_model_structure}
If ${}_S C_R$ is faithfully semidualizing and $(\M,\N)$ is a complete Tor-orthogonal duality pair over $R$ with $\M$ resolving and $S \in \M^C(S)$, then $\GF_{(\M^C(S),\N^C(S^{\rm op}))}$ is resolving, and there exists a hereditary abelian model structure on $\Modu (S)$ such that $\GF_{(\M^C(S),\N^C(S^{\rm op}))}$ (resp., $\M^C(S)$) is the class of (trivially) cofibrant objects, and $[\M^C(S)]^\perp$ (resp., $[\GF_{(\M^C(S),\N^C(S^{\rm op}))}]^\perp$) is the class of (trivially) fibrant objects. Furthermore, its homotopy category is triangle equivalent to the stable category 
\[
[\GF_{(\M^C(S),\N^C(S^{\rm op}))} \cap [\M^C(S)]^\perp] / \sim,
\] 
where $[\GF_{(\M^C(S),\N^C(S^{\rm op}))} \cap [\M^C(S)]^\perp$ is a Frobenius category in which $\M^C(S) \cap [\M^C(S)]^\perp$ is the class of projective-injective objects, and for $f, g \colon M \to N$ one has that $f \sim g$ if $f - g$ factors through an object in $\M^C(S) \cap [\M^C(S)]^\perp$.  
\end{corollary}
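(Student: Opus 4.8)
The plan is to reduce the statement to the theorem of Wang, Yang and Zhu recalled immediately before it, applied to the pair $(\M^C(S),\N^C(S^{\rm op}))$ over $S$, and then invoke the standard description of the homotopy category of a hereditary abelian model structure. So the only real content is to verify the three hypotheses required by that theorem: that $(\M^C(S),\N^C(S^{\rm op}))$ is a Tor-orthogonal complete duality pair over $S$ with $\M^C(S)$ resolving.

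First I would check that $(\M^C(S),\N^C(S^{\rm op}))$ is a complete duality pair over $S$ and is Tor-orthogonal. That it is a duality pair over $S$ is Proposition \ref{Dualidad1}; completeness follows from Proposition \ref{Prop0}(6), using the hypotheses that $(\M,\N)$ is complete and $S\in\M^C(S)$; and Tor-orthogonality follows from Proposition \ref{prop:Tor-orthogonal}, since $(\M,\N)$ is Tor-orthogonal. Next I would check that $\M^C(S)$ is resolving. As $(\M,\N)$ is complete it is in particular perfect, so Proposition \ref{Prop0}(4) gives that $(\M^C(S),\N^C(S^{\rm op}))$ is perfect as well; hence $\M^C(S)$ is closed under extensions, closed under coproducts, and contains $S$, and since it is also closed under pure submodules (Proposition \ref{Dualidad1}) it contains every projective $S$-module. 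It remains to see that $\M^C(S)$ is closed under kernels of epimorphisms. Here the faithfully semidualizing hypothesis enters: it makes $\A_C(R)$ thick, so $\mathfrak M=\M\cap\A_C(R)$ is closed under kernels of epimorphisms, being the intersection of the resolving class $\M$ with a thick class; hence $\M^C(S)$ is closed under kernels of epimorphisms by the implication (b)$\Rightarrow$(a) of Proposition \ref{Prop0}(5).

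With these properties in hand, the Wang--Yang--Zhu theorem applies verbatim to $(\X,\Y)=(\M^C(S),\N^C(S^{\rm op}))$, yielding that $\GF_{(\M^C(S),\N^C(S^{\rm op}))}$ is resolving and producing the hereditary abelian model structure on $\Modu(S)$ with $\GF_{(\M^C(S),\N^C(S^{\rm op}))}$ (resp.\ $\M^C(S)$) the (trivially) cofibrant objects and $[\M^C(S)]^\perp$ (resp.\ $[\GF_{(\M^C(S),\N^C(S^{\rm op}))}]^\perp$) the (trivially) fibrant objects. The identification of the homotopy category is then the general fact about hereditary abelian model structures recorded in Section \ref{sec:prelims} (Hovey's correspondence together with Gillespie's results): the homotopy category is the stable category of the Frobenius category of cofibrant--fibrant objects $\GF_{(\M^C(S),\N^C(S^{\rm op}))}\cap[\M^C(S)]^\perp$, in which the projective--injective objects are the trivially-cofibrant--fibrant ones, that is $\M^C(S)\cap[\M^C(S)]^\perp$, and two parallel maps are congruent iff their difference factors through such an object. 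I expect the one step that genuinely requires care to be the resolving property of $\M^C(S)$ — more precisely, its closure under kernels of epimorphisms — which is exactly the place where the faithfully semidualizing hypothesis is used (first to make $\A_C(R)$ thick, and then through the equivalence (a)$\Leftrightarrow$(b) of Proposition \ref{Prop0}(5)).
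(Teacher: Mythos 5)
Your proof is correct and matches the paper's argument: verify the Wang--Yang--Zhu hypotheses via Proposition \ref{Dualidad1}, Proposition \ref{Prop0} (parts (4)--(6)) and Proposition \ref{prop:Tor-orthogonal}, apply that theorem, and finish with Gillespie's description of the homotopy category of a hereditary abelian model structure. The only cosmetic difference is that to show $\mathfrak M$ is closed under kernels of epimorphisms you pass through the thickness of $\A_C(R)$, whereas the paper only needs that $\A_C(R)$ is always resolving; the faithfully semidualizing hypothesis is in any case genuinely required for the implication (b)\,$\Rightarrow$\,(a) of Proposition \ref{Prop0}(5), exactly as you identify.
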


\begin{proof}
First, note that $\M \cap \A_C(R)$ is resolving, since $\A_C(R)$. Then by Proposition \ref{Prop0} (5) and (6), and the properties of perfect duality pairs, we have that $(\M^C(S),\N^C(S^{\rm op}))$ is a complete duality pair over $S$ with $\M^C(S)$ resolving, which is also Tor-orthogonal by Proposition \ref{prop:Tor-orthogonal}. The existence of the mentioned model structure follows by the previous comments on \cite{WangYangZhu19}. Finally, the description of its homotopy category is a consequence of \cite[Prop. 2.2 and Thm. 4.3]{GillespieHereditary}. 
\end{proof}

We now explore the relation between relative Gorenstein injectives and Gorenstein flats via Pontryagin duality.

\begin{proposition}[Gorenstein duality relations]\label{Dual-P}
The following assertions hold for every $M \in \Modu (S)$ and $N \in \Modu (R)$. 
\begin{enumerate}
\item If $M \in  \GF_{(\M^C(S), \N^C(S^{\rm op}))}$, then $M^+ \in \GI_{\N^C(S^{\rm op})}$.

\item If $N \in \GF_{(\mathfrak{M,N})}$, then $N^+ \in \GI_{\mathfrak{N}}$.
\end{enumerate}
\end{proposition}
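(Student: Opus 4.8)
The plan is to reduce part (1) to part (2) by Foxby equivalence, and to prove part (2) directly via the standard character-functor manipulations. For part (1), suppose $M \in \GF_{(\M^C(S),\N^C(S^{\rm op}))}$. Since $\M^C(S) \subseteq \B_C(S)$ (Lemma \ref{Lema1}) and $\B_C(S)$ is thick when ${}_S C_R$ is faithfully semidualizing, $M \in \B_C(S)$; hence by Proposition \ref{GF-Equiv} (1) we get $\Hom_S(C,M) \in \GF_{(\mathfrak{M,N})}$. Granting part (2), $[\Hom_S(C,M)]^+ \in \GI_{\mathfrak{N}}$. Now, as $C$ is a finitely presented $S$-module by $\mathsf{(sd1)}$, \cite[Thm. 3.2.11]{EnochsJenda00} gives the natural isomorphism $[\Hom_S(C,M)]^+ \simeq M^+ \otimes_S C$, and since $M \in \B_C(S)$ one has $M^+ \in \A_C(S^{\rm op})$ (the Bass class is taken by $(-)^+$ into the Auslander class, as $(\A_C(R),\B_C(R^{\rm op}))$ is a duality pair and similarly over $S^{\rm op}$). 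Thus $M^+ \otimes_S C \in \GI_{\mathfrak{N}}$ with $M^+ \in \A_C(S^{\rm op})$, and Proposition \ref{duality_Ginj} (1) yields $M^+ \in \GI_{\N^C(S^{\rm op})}$, as desired. So the real content is part (2), and the hypothesis that ${}_S C_R$ is faithfully semidualizing is only needed for part (1).

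For part (2): let $N \in \GF_{(\mathfrak{M,N})}$, so that $N \simeq \Ker(\partial^0)$ in an exact complex
\[
A_\bullet = \cdots \to A_1 \xrightarrow{\partial_1} A_0 \xrightarrow{\partial_0} A^0 \xrightarrow{\partial^0} A^1 \to \cdots
\]
with all $A_k, A^k \in \mathfrak{M} = \A_C(R) \cap \M$ and $B \otimes_R A_\bullet$ exact for every $B \in \mathfrak{N} = \B_C(R^{\rm op}) \cap \N$. Apply the exact contravariant functor $(-)^+ = \Hom_{\mathbb{Z}}(-,\mathbb{Q/Z})$ to $A_\bullet$: this gives an exact complex $A_\bullet^+$ of $R^{\rm op}$-modules, with $\Ker$ at the relevant spot isomorphic to $N^+$ (exactness of $(-)^+$ turns the kernel $N$ of $\partial^0$ into the cokernel, hence the kernel at the transposed position, of $(\partial^0)^+$; one should state this carefully but it is routine). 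Each $A_k^+, (A^k)^+$ lies in $\B_C(R^{\rm op}) \cap \N = \mathfrak{N}$, because $(\M,\N)$ and $(\A_C(R),\B_C(R^{\rm op}))$ are duality pairs over $R$, so $A \in \A_C(R)$ iff $A^+ \in \B_C(R^{\rm op})$ and $A \in \M$ iff $A^+ \in \N$. It remains to check the orthogonality condition: that $\Hom_{R^{\rm op}}(B, A_\bullet^+)$ is exact for every $B \in \mathfrak{N}$. This is precisely the adjunction isomorphism
\[
\Hom_{R^{\rm op}}(B, A_k^+) = \Hom_{R^{\rm op}}(B,\Hom_{\mathbb{Z}}(A_k,\mathbb{Q/Z})) \simeq \Hom_{\mathbb{Z}}(B \otimes_R A_k, \mathbb{Q/Z}) = (B \otimes_R A_k)^+,
\]
natural in $A_k$, so $\Hom_{R^{\rm op}}(B, A_\bullet^+) \simeq (B \otimes_R A_\bullet)^+$, which is exact because $B \otimes_R A_\bullet$ is exact and $(-)^+$ is exact (indeed faithfully exact, $\mathbb{Q/Z}$ being an injective cogenerator). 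Hence $A_\bullet^+$ witnesses $N^+ \in \GI_{\mathfrak{N}}$.

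I do not expect a genuine obstacle here; the argument is a packaging of the standard fact that Pontryagin duality converts a Gorenstein-flat-type resolution into a Gorenstein-injective-type one. The one point deserving care is the bookkeeping of where the distinguished kernel lands after applying $(-)^+$ — i.e., verifying that $N \simeq \Ker(\partial^0)$ really does give $N^+ \simeq \Ker$ of the appropriate differential in the dualized complex, rather than a cokernel or a shift — and confirming, via the duality-pair axioms for $(\M,\N)$ and for $(\A_C(R),\B_C(R^{\rm op}))$, that the dualized components stay inside $\mathfrak{N}$. Everything else is the tensor-hom adjunction together with exactness of $\Hom_{\mathbb{Z}}(-,\mathbb{Q/Z})$.
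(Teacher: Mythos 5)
Your proof of part (2) is correct: the direct Pontryagin-duality argument (dualize the totally acyclic complex, note the components land in $\mathfrak{N}$ because both $(\M,\N)$ and $(\A_C(R),\B_C(R^{\rm op}))$ are duality pairs, and use the tensor--hom adjunction $\Hom_{R^{\rm op}}(B, A_\bullet^+) \simeq (B\otimes_R A_\bullet)^+$) works as stated. Interestingly, you have inverted the paper's logic: the paper proves (1) directly and then \emph{deduces} (2) from (1) by observing that $R$ itself is a faithfully semidualizing $(R,R)$-bimodule with $\A_R(R)=\Modu(R)$ and $\B_R(R^{\rm op})=\Modu(R^{\rm op})$, so that $(\mathfrak M,\mathfrak N)$ is just $(\tilde\M^R(R),\tilde\N^R(R^{\rm op}))$ for the duality pair $(\tilde\M,\tilde\N)=(\mathfrak M,\mathfrak N)$. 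Both routes are fine for (2).

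The problem is your deduction of (1) from (2). Two genuine gaps:

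First, the reduction routes through Proposition \ref{GF-Equiv}(1) and Proposition \ref{duality_Ginj}(1), both of which carry the hypothesis that ${}_S C_R$ is \emph{faithfully} semidualizing. Proposition \ref{Dual-P}, as stated, has no such hypothesis, and the paper's direct proof of (1) does not need it: it proceeds termwise via the chain of natural isomorphisms $\Hom_{S^{\rm op}}(N,X_\bullet^+)\simeq [N\otimes_S X_\bullet]^+$, using Lemma \ref{lem:HolmWhite_for_MN}, \cite[Thms.\ 2.1.10 and 3.2.11]{EnochsJenda00} and \cite[Thm.\ 6.4(c)]{HolmWhite07}, all of which are degreewise statements about the components $X_k\in\M^C(S)\subseteq\B_C(S)$ and $N\in\N^C(S^{\rm op})$ and never mention the cycle $M$ itself. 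So your argument proves a strictly weaker statement than the proposition.

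Second, even granting faithful semidualizing, the step ``$M\in\GF_{(\M^C(S),\N^C(S^{\rm op}))}$, $\M^C(S)\subseteq\B_C(S)$ and $\B_C(S)$ is thick, hence $M\in\B_C(S)$'' does not hold. Thickness (closure under extensions, kernels of epis, cokernels of monos, direct summands) does not imply that the cycles of an \emph{unbounded} exact complex with components in the class lie in the class; see e.g.\ the class of modules of finite projective dimension, which is thick but does not contain the cycles of a totally acyclic complex of projectives. The reason Proposition \ref{GF-Equiv} is stated with the explicit hypothesis $M\in\B_C(S)$ is precisely that this membership is not automatic from the Gorenstein-flat condition. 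Without $M\in\B_C(S)$ you cannot invoke Proposition \ref{GF-Equiv}(1), nor conclude $M^+\in\A_C(S^{\rm op})$ to invoke Proposition \ref{duality_Ginj}(1). So the reduction of (1) to (2) has a hole that does not seem repairable along these lines; the paper's termwise approach to (1) is the way to go.
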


\begin{proof} 
For part (1), let $M \in  \GF_{(\M^{C}(S), \N^{C}(S^{\rm op}))}$. Then, $M \simeq \Ker(\partial^0)$ in an exact complex 
\[
X_\bullet = \cdots \to X_1 \xrightarrow{\partial_1} X_0 \xrightarrow{\partial_0} X^0 \xrightarrow{\partial^0} X^1 \xrightarrow{\partial^1} \cdots
\] 
where $X_k, X^k \in \M^C(S)$ for every $k \in \mathbb{Z}_{\geq 0}$, such that $N \otimes_S X_\bullet$ is exact for every $N \in \N^C(S^{\rm op})$. By Proposition \ref{Dualidad1}, 
\[
X_\bullet^+ = \cdots \xrightarrow{(\partial^1)^+} (X^1)^+ \xrightarrow{(\partial^0)^+} (X^0)^+ \xrightarrow{(\partial_0)^+} (X_0)^+ \xrightarrow{(\partial_1)^+} (X_1)^+ \to \cdots
\]
is an exact complex with components in $\mathcal{N}^C(S^{\rm op})$ with $M^+ \simeq \Ker((\partial_0)^+)$. It remains to show that $\Hom_{S^{\rm op}}(N,X_\bullet^+)$ is exact for every $N \in \mathcal{N}^C(S^{\rm op})$. For,
\begin{align*}
\Hom_{S^{\rm op}}(N,X_\bullet^+) & \simeq \Hom_{S^{\rm op}}(\Hom_{R^{\rm op}}(C,N \otimes_S C),\Hom_{R^{\rm op}}(C, X^+_\bullet \otimes_S C)) & \text{[2nd. Foxby eq.]} \\
& \simeq \Hom_{R^{\rm op}}(N \otimes_S C, X^+_\bullet \otimes_S C) & \text{[Lemma \ref{lem:HolmWhite_for_MN} (2)]} \\
& \simeq \Hom_{R^{\rm op}}(N \otimes_S C, [\Hom_S(C,X_\bullet)]^+) & \text{\cite[Thm. 3.2.11]{EnochsJenda00}} \\
& \simeq [(N \otimes_S C) \otimes_R \Hom_S(C,X_\bullet) ]^+ & \text{\cite[Thm. 2.1.10]{EnochsJenda00}} \\
& \simeq [N \otimes_S X_\bullet]^+ & \text{\cite[Thm. 6.4 (c)]{HolmWhite07}}
\end{align*}
where $N \otimes_S X_\bullet$ is exact, and so is $[N \otimes_S X_\bullet]^+$. Hence $\Hom_{S^{\rm op}}(N,X_\bullet^+)$ is exact. 

Now for part (2), we know from part (1) that $[\GF _{(\M^C(S), \N^C(S^{\rm op}))}]^{+} \subseteq \GI_{\N^C(S^{\rm op})}$. On the other hand, $R$ is a (faithfully) semidualizing $(R,R)$-bimodule, and  $\A _R(R) = \Modu (R)$ and $\B_R(R^{\op}) = \Modu (R ^{\op})$. It follows that for any duality pair $(\tilde{\M},\tilde{\N})$ over $R$ we have $\tilde{\M}^R(R) = \tilde{\M} \cap \A_R(R) = \tilde{\M}$ and $\tilde{\N}^R(R^{\op}) = \tilde{\N} \cap \B_R(R^{\rm op}) = \tilde{\N}$. The previous containment implies $[\GF_{(\M,\N)}]^{+} \subseteq \GI_{\N}$. In particular, since $(\mathfrak{M,N})$ is a duality pair over $R$ by \cite[Prop. 2.8]{HJ09}, we get $[\GF_{(\mathfrak{M,N})}]^{+} \subseteq \GI_{\mathfrak{N}}$.
\end{proof}

The following is a consequence of Corollaries \ref{Gorenstein-injective-equivalence}, \ref{Gorenstein-flat-equivalence}, Proposition \ref{Dual-P}, and the fact that $(\A_C(R),\B_C(R^{\rm op}))$ and $(\A_C(S^{\rm op}),\B_C(S))$ are symmetric duality pairs.

\begin{corollary}[Gorenstein flat Foxby equivalences and duality]
If $_S C _R$ is faithfully semidualizing, then in the following diagram, the horizontal arrows are equivalences: 
\[
\begin{tikzpicture}[description/.style={fill=white,inner sep=2pt}] 
\matrix (m) [ampersand replacement=\&, matrix of math nodes, row sep=8em, column sep=5em, text height=1.25ex, text depth=0.25ex] 
{ 
\A_C(R) \cap \GF_{(\mathfrak{M,N})} \& \GF_{(\M^C(S),\N^C(S^{\rm op}))} \cap \B_C(S) \\
\B_C(R^{\rm op}) \cap \GI_{\mathfrak{N}} \& \GI_{\N^C(S^{\rm op})} \cap \A_C(S^{\rm op}) \\
}; 
\path[->] 
(m-1-1) edge [bend left = 15] node[above] {\footnotesize$C \otimes_R \sim$} node[below] {\footnotesize$\sim$} (m-1-2)
(m-1-2) edge [bend left = 15] node[above] {\footnotesize$\sim$} node[below] {\footnotesize$\Hom_S(C,\sim)$} (m-1-1)
(m-2-2) edge [bend right = 15] node[above] {\footnotesize$- \otimes_S C$} node[below] {\footnotesize$\sim$} (m-2-1)
(m-2-1) edge [bend right = 15] node[above] {\footnotesize$\sim$} node[below] {\footnotesize$\Hom_{R^{\rm op}}(C,\sim)$} (m-2-2)
(m-1-1) edge node[left] {\footnotesize$(-)^+$} (m-2-1)
(m-1-2) edge node[right] {\footnotesize$(-)^+$} (m-2-2)
;
\end{tikzpicture} 
\]
\end{corollary}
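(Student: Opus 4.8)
The plan is to read the horizontal equivalences off results already proved and then check that the two columns of Pontryagin duals are well defined and compatible with them. First I would observe that the top row is exactly Corollary \ref{Gorenstein-flat-equivalence}: the functor $C \otimes_R \sim$ restricts to an equivalence $\A_C(R) \cap \GF_{(\mathfrak{M,N})} \to \GF_{(\M^C(S),\N^C(S^{\rm op}))} \cap \B_C(S)$ with quasi-inverse $\Hom_S(C,\sim)$. Likewise the bottom row is Corollary \ref{Gorenstein-injective-equivalence}, read with $\Hom_{R^{\rm op}}(C,\sim)$ as the left-to-right arrow and $- \otimes_S C$ as its quasi-inverse: it gives an equivalence between $\B_C(R^{\rm op}) \cap \GI_{\mathfrak{N}}$ and $\A_C(S^{\rm op}) \cap \GI_{\N^C(S^{\rm op})}$. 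So the horizontal assertions need no further argument; what remains is to justify the vertical arrows and the commutativity of the square.

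Next I would verify that $(-)^+$ lands in the stated classes. For the left column, take $M \in \A_C(R) \cap \GF_{(\mathfrak{M,N})}$. Since $(\A_C(R),\B_C(R^{\rm op}))$ is a complete (in particular symmetric) duality pair over $R$, we have $M^+ \in \B_C(R^{\rm op})$, and Proposition \ref{Dual-P}~(2) gives $M^+ \in \GI_{\mathfrak{N}}$; hence $M^+ \in \B_C(R^{\rm op}) \cap \GI_{\mathfrak{N}}$. For the right column, take $M \in \GF_{(\M^C(S),\N^C(S^{\rm op}))} \cap \B_C(S)$. Because $(\A_C(S^{\rm op}),\B_C(S))$ is a complete (hence symmetric) duality pair over $S^{\rm op}$, the pair $(\B_C(S),\A_C(S^{\rm op}))$ is a duality pair over $S$, so $M^+ \in \A_C(S^{\rm op})$; combined with Proposition \ref{Dual-P}~(1), which yields $M^+ \in \GI_{\N^C(S^{\rm op})}$, we get $M^+ \in \GI_{\N^C(S^{\rm op})} \cap \A_C(S^{\rm op})$. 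Thus both vertical functors are well defined.

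Finally I would check commutativity. For $M \in \A_C(R) \cap \GF_{(\mathfrak{M,N})}$ the Hom-tensor adjunction for character modules, \cite[Thm. 3.2.1]{EnochsJenda00} (used in the same way as in the proof of Proposition \ref{Dualidad1}), provides a natural isomorphism of right $S$-modules $(C \otimes_R M)^+ \simeq \Hom_{R^{\rm op}}(C,M^+)$. This says that going right (apply $C \otimes_R \sim$) then down (apply $(-)^+$) coincides naturally with going down then right along the bottom equivalence, so the square commutes up to natural isomorphism; together with the quasi-inverse relations quoted above this completes the proof.

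As for the main obstacle, there is essentially no deep point here: the substance lies entirely in Corollaries \ref{Gorenstein-injective-equivalence} and \ref{Gorenstein-flat-equivalence} and in Proposition \ref{Dual-P}. The only thing demanding care is bookkeeping of sides and variances — invoking the symmetry of each duality pair over the correct opposite ring so that $(-)^+$ preserves Auslander/Bass membership, and reading the isomorphism $(C \otimes_R M)^+ \simeq \Hom_{R^{\rm op}}(C,M^+)$ in the category of right $S$-modules and as genuinely natural in $M$.
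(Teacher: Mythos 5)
Your proof is correct and follows the same route the paper takes: the paper justifies this corollary simply by citing Corollaries \ref{Gorenstein-flat-equivalence} and \ref{Gorenstein-injective-equivalence} for the horizontal equivalences, Proposition \ref{Dual-P} for the vertical $(-)^+$ arrows landing in the Gorenstein classes, and the symmetry of the duality pairs $(\A_C(R),\B_C(R^{\rm op}))$ and $(\A_C(S^{\rm op}),\B_C(S))$ for membership in the Auslander/Bass classes. Your additional check that the square commutes via $(C\otimes_R M)^+\simeq\Hom_{R^{\rm op}}(C,M^+)$ is a sensible verification even though the statement only explicitly asserts that the horizontal arrows are equivalences.
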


A natural question at this point is whether it is possible to complete the previous diagram with 
\[
(-)^+ \colon \B_C(R^{\rm op}) \cap \GI_{\mathfrak{N}} \to \A_C(R) \cap \GF_{(\mathfrak{M,N})}
\]
and
\[
(-)^+ \colon \GI_{\N^C(S^{\rm op})} \cap \A_C(S^{\rm op}) \to \GF_{(\M^C(S),\N^C(S^{\rm op}))} \cap \B_C(S).
\] 
We are not aware if this is the case in general, but we can at least improve the duality relations in Proposition \ref{Dual-P} by modifying slightly the previous classes of relative Gorenstein injectives and Gorenstein flats. Specifically, we shall need to consider kernels of certain classes of modules.

Let us present the following notation motivated by \cite[\S 2.7]{ChengZhao24}:
\begin{align*}
\mathcal{H}(\mathfrak{N}) & := {}^\perp\mathfrak{N} \cap \mathfrak{N}, \\ 
\mathcal{H}(\mathcal{N}^C(S^{\rm op})) & = {}^\perp[\mathcal{N}^C(S^{\rm op})] \cap \mathcal{N}^C(S^{\rm op}), \\
\mathcal{H}_C(\mathfrak{N}) & := \{ M \in \Modu (S) \ \text{ : } \ M \simeq \Hom_{R^{\rm op}}(C,N) \text{ for some } N \in \mathcal{H}(\mathfrak{N}) \}.
\end{align*}

The following two results characterizes the total orthogonal complement ${}^\perp\mathcal{N}^C(S^{\rm op})$, and relate the classes $\mathcal{H}(\mathcal{N}^C(S^{\rm op}))$, $\mathcal{H}_C(\mathfrak{N})$ and $\mathcal{H}(\mathfrak{N})$. These are relative versions of \cite[Lem. 3.3 \& Prop. 3.4]{ChengZhao24}, and can be proved using similar arguments.

\begin{proposition}[cores of $\mathfrak{N}$ and $\mathcal{N}^C(S^{\rm op})$] \label{raro}
The following assertions hold:
\begin{enumerate}
\item The following are equivalent for every $M \in \Modu (S^{\rm op})$:
\begin{enumerate}[(a)]
\item $M \in {}^\perp[\mathcal{N}^C(S^{\rm op})]$.
\item $M \otimes_S C \in {}^\perp\mathfrak{N}$ and $\Tor^S_i(M,C) = 0$ for every $i \in \mathbb{Z}_{> 0}$.
\end{enumerate}

\item The following are equivalent for every $N \in \Modu (S^{\rm op})$:
\begin{enumerate}[(a)]
\item $N \in \mathcal{H}(\mathcal{N}^C(S^{\rm op}))$.
\item $N \in \mathcal{H}_C(\mathfrak{N})$.
\item $N \otimes_S C \in \mathcal{H}(\mathfrak{N})$.
\end{enumerate}
\end{enumerate}
\end{proposition}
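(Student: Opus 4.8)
\textbf{Proof plan for Proposition \ref{raro}.}

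The plan is to use the second Foxby equivalence together with the Ext/Tor comparison isomorphisms of Lemma \ref{lem:HolmWhite_for_MN} as the main engine, mimicking \cite[Lem.~3.3 \& Prop.~3.4]{ChengZhao24}. For part (1), the key observation is that for $M \in \Modu(S^{\rm op})$ and $B \in \mathfrak{N} = \mathcal{B}_C(R^{\rm op}) \cap \mathcal{N}$, one has $\Hom_{R^{\rm op}}(C,B) \in \mathcal{N}^C(S^{\rm op})$ and every object of $\mathcal{N}^C(S^{\rm op})$ arises this way. The direction (b) $\Rightarrow$ (a) should follow from a change-of-rings spectral sequence (or a direct dimension-shifting argument): if $\Tor^S_{>0}(M,C) = 0$, then $M \otimes_S C$ computes the derived tensor product, and there is a natural isomorphism $\Ext^i_{S^{\rm op}}(M, \Hom_{R^{\rm op}}(C,B)) \simeq \Ext^i_{R^{\rm op}}(M \otimes_S C, B)$ for all $i \geq 0$ (this is the standard adjunction isomorphism extended to derived functors under the vanishing-Tor hypothesis, analogous to Lemma \ref{lem:HolmWhite_for_MN}); hence $M \otimes_S C \in {}^\perp\mathfrak{N}$ forces $\Ext^{\geq 1}_{S^{\rm op}}(M, \mathcal{N}^C(S^{\rm op})) = 0$, i.e. $M \in {}^\perp[\mathcal{N}^C(S^{\rm op})]$. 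For (a) $\Rightarrow$ (b), first note that taking $B$ to range over the injective $R^{\rm op}$-modules contained in $\mathfrak{N}$ (which exist since $(\mathcal{M},\mathcal{N})$ restricted to $\mathfrak{N}$ contains $\Inj(R^{\rm op})$ when the pair is complete, and in general one works with the injective cogenerators available) one extracts the vanishing $\Tor^S_{>0}(M,C) = 0$; once that holds, the same adjunction isomorphism turns $M \in {}^\perp[\mathcal{N}^C(S^{\rm op})]$ into $M \otimes_S C \in {}^\perp\mathfrak{N}$. I expect the main obstacle here is justifying the derived adjunction isomorphism without a faithfully-semidualizing hypothesis, and being careful that enough injectives lie in $\mathfrak{N}$ to detect the Tor-vanishing; one may need to invoke that $\mathcal{B}_C(R^{\rm op})$ contains all injectives and that $\mathcal{N}$ is closed under the relevant operations, or restrict attention to testing against $\Hom_{R^{\rm op}}(C,E)$ for $E$ injective.

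For part (2), the equivalence (a) $\Leftrightarrow$ (b) should be essentially formal: by definition $\mathcal{H}(\mathcal{N}^C(S^{\rm op})) = {}^\perp[\mathcal{N}^C(S^{\rm op})] \cap \mathcal{N}^C(S^{\rm op})$ and $\mathcal{H}_C(\mathfrak{N}) = \Hom_{R^{\rm op}}(C,\mathcal{H}(\mathfrak{N}))$. Given $N \in \mathcal{H}_C(\mathfrak{N})$, write $N = \Hom_{R^{\rm op}}(C, N')$ with $N' \in {}^\perp\mathfrak{N} \cap \mathfrak{N}$; then $N \in \mathcal{N}^C(S^{\rm op})$ since $N' \in \mathfrak{N}$, and using Lemma \ref{lem:HolmWhite_for_MN}(2) together with the second Foxby equivalence $\Ext^i_{S^{\rm op}}(N, \Hom_{R^{\rm op}}(C,B)) \simeq \Ext^i_{R^{\rm op}}(N', B) = 0$ for $B \in \mathfrak{N}$, giving $N \in {}^\perp[\mathcal{N}^C(S^{\rm op})]$; the converse runs backwards using that $N' = N \otimes_S C$. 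The equivalence (b) $\Leftrightarrow$ (c) is then the Foxby dictionary: $N \in \mathcal{H}_C(\mathfrak{N})$ means $N \otimes_S C \in \mathcal{H}(\mathfrak{N})$ because $\Hom_{R^{\rm op}}(C, N \otimes_S C) \simeq N$ and $N \otimes_S C$ recovers the preimage; membership $N \otimes_S C \in \mathfrak{N}$ holds since $N \in \mathcal{N}^C(S^{\rm op})$ (by Lemma \ref{Lema1}(2)), and the orthogonality $N \otimes_S C \in {}^\perp\mathfrak{N}$ transfers across again via Lemma \ref{lem:HolmWhite_for_MN}(2). The least routine point will be confirming that in part (2) no faithfulness hypothesis is needed — the essential surjectivity of the Foxby functors on the Auslander/Bass classes plus the Ext-comparison already pin everything down, so one must simply be disciplined about which class each intermediate object lives in and invoke Proposition \ref{Equivalencia1} and Lemma \ref{Lema1} at each step rather than re-deriving the equivalences.

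Overall, I would organize the write-up as: (i) record the adjunction/comparison isomorphism for $\Ext$ between $M$ over $S^{\rm op}$ and $M \otimes_S C$ over $R^{\rm op}$ under the hypothesis $\Tor^S_{>0}(M,C)=0$, citing or adapting \cite[Thm.~6.4]{HolmWhite07}; (ii) prove part (1) by testing orthogonality against the generating family $\{\Hom_{R^{\rm op}}(C,B) : B \in \mathfrak{N}\}$ of $\mathcal{N}^C(S^{\rm op})$, extracting Tor-vanishing from injective test objects and then deducing the orthogonality statement; (iii) deduce part (2) from part (1), Lemma \ref{Lema1}(2), Proposition \ref{Equivalencia1}, and Lemma \ref{lem:HolmWhite_for_MN}(2). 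The main conceptual obstacle throughout is the derived adjunction without faithful semidualizingness, and I would resolve it exactly as Cheng and Zhao do, so the proof should amount to a careful translation of their argument into the relative setting.
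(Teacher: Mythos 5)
Your plan is correct in outline and follows the approach the paper implicitly takes (adapting Cheng--Zhao's argument via the adjunction isomorphism and the $\Ext$/$\Tor$ comparison in Lemma~\ref{lem:HolmWhite_for_MN}), and part (2) goes exactly as the paper's own Lemma~\ref{lema-corazon}. The proof of the direction (b)~$\Rightarrow$~(a) in part (1) that you sketch is sound and needs no extra hypothesis: take a projective resolution $P_\bullet \to M$ over $S^{\rm op}$; $\Tor$-vanishing makes $P_\bullet \otimes_S C$ a resolution of $M \otimes_S C$, each $P_k \otimes_S C$ is a summand of a coproduct of copies of $C$, and $\mathsf{(b1)}$ gives $\Ext^{\geq 1}_{R^{\rm op}}(P_k \otimes_S C, B) = 0$ for $B \in \mathfrak{N}$, so $\Hom_{R^{\rm op}}(P_\bullet \otimes_S C, B) \simeq \Hom_{S^{\rm op}}(P_\bullet, \Hom_{R^{\rm op}}(C,B))$ computes both $\Ext^i_{R^{\rm op}}(M \otimes_S C, B)$ and $\Ext^i_{S^{\rm op}}(M, \Hom_{R^{\rm op}}(C,B))$.

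The genuine issue is the one you flag in passing and then wave away: for (a)~$\Rightarrow$~(b) in part (1), the extraction of $\Tor^S_{>0}(M,C) = 0$ really does require an injective cogenerator of $\Modu(R^{\rm op})$ lying in $\mathfrak{N} = \mathcal{B}_C(R^{\rm op}) \cap \mathcal{N}$. Membership of injectives in $\mathcal{B}_C(R^{\rm op})$ is automatic, but membership in $\mathcal{N}$ is not, and the section's standing hypothesis is only that $(\mathcal{M},\mathcal{N})$ is a duality pair. As stated, (1)(a)~$\Rightarrow$~(b) fails without such a hypothesis: take the degenerate duality pair $(\mathcal{M},\mathcal{N}) = (\{0\},\{0\})$, so $\mathfrak{N} = \{0\}$ and $\mathcal{N}^C(S^{\rm op}) = \{0\}$; then (a) holds for every $M$, but (b) forces $\Tor^S_{>0}(M,C) = 0$ for all $M$, i.e.\ ${}_S C$ flat, which need not hold for a semidualizing bimodule. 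The fix is to assume, say, that $(\mathcal{M},\mathcal{N})$ is perfect (so that $\Inj(R^{\rm op}) \subseteq \mathcal{N}$ by Gillespie's result quoted in \S\ref{sec:prelims}); in Cheng and Zhao's concrete setting $\mathcal{N} = \mathcal{I}_n(R^{\rm op})$ contains all injectives, which is why the argument goes through there unremarked. Your write-up should state this as a hypothesis rather than as an afterthought ("one works with the injective cogenerators available") --- note, though, that this omission is inherited from the paper, which does not supply a proof and refers the reader to Cheng--Zhao, so it is a shared gap rather than one specific to your plan.

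One small additional caution: you mention "citing or adapting \cite[Thm.~6.4]{HolmWhite07}" for the $\Ext$-comparison, but Holm--White's Theorem~6.4 (and hence Lemma~\ref{lem:HolmWhite_for_MN}) requires the modules to lie in the Auslander/Bass classes, whereas $M$ in (1) is arbitrary. You cannot cite it there; you must run the resolution-plus-adjunction argument directly, exactly as you do in the body of your plan. Good that you phrased it as "analogous to" rather than as a citation, but it is worth being explicit that the derived adjunction isomorphism for arbitrary $M$ is a separate computation, not a consequence of Lemma~\ref{lem:HolmWhite_for_MN}.
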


The following is immediate from the definition of $\mathcal{H}_C (\mathfrak{N})$ and the previous proposition.

\begin{corollary}[Foxby equivalences for cores]
In the following diagram, the horizontal arrows are equivalences and the vertical arrows are inclusions of categories: 
\[
\begin{tikzpicture}[description/.style={fill=white,inner sep=2pt}] 
\matrix (m) [ampersand replacement=\&, matrix of math nodes, row sep=7em, column sep=4.5em, text height=1.25ex, text depth=0.25ex] 
{ 
\mathcal{H}_C(\mathfrak{N}) \& \mathcal{H}(\mathfrak{N}) \\
\A_C(S^{\rm op}) \& \B_C(R^{\rm op}) \\
}; 
\path[->] 
(m-1-1) edge [bend left = 20] node[above] {\footnotesize$- \otimes_S C$} node[below] {\footnotesize$\sim$} (m-1-2)
(m-1-2) edge [bend left = 20] node[above] {\footnotesize$\sim$} node[below] {\footnotesize$\Hom_{R^{\rm op}}(C,\sim)$} (m-1-1)
(m-2-1) edge [bend left = 20] node[above] {\footnotesize$- \otimes_S C$} node[below] {\footnotesize$\sim$} (m-2-2)
(m-2-2) edge [bend left = 20] node[above] {\footnotesize$\sim$} node[below] {\footnotesize$\Hom_{R^{\rm op}}(C,\sim)$} (m-2-1)
;
\path[>->]
(m-1-1) edge (m-2-1) 
(m-1-2) edge (m-2-2) 
;
\end{tikzpicture} 
\]
\end{corollary}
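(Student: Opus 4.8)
The plan is to obtain the whole diagram by restricting the second Foxby equivalence between $\A_C(S^{\op})$ and $\B_C(R^{\op})$ recalled in Section~\ref{sec:prelims}, which already accounts for the bottom row of the diagram.

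First I would dispose of the vertical arrows. From $\mathfrak{N} = \B_C(R^{\op}) \cap \N$ we get at once $\mathcal{H}(\mathfrak{N}) = {}^\perp\mathfrak{N} \cap \mathfrak{N} \subseteq \mathfrak{N} \subseteq \B_C(R^{\op})$, so the right-hand inclusion is clear. For the left-hand one, every object of $\mathcal{H}_C(\mathfrak{N})$ is isomorphic to $\Hom_{R^{\op}}(C,N)$ for some $N \in \mathcal{H}(\mathfrak{N}) \subseteq \B_C(R^{\op})$, and since $\Hom_{R^{\op}}(C,\sim)$ carries $\B_C(R^{\op})$ into $\A_C(S^{\op})$ by the second Foxby equivalence, we conclude $\mathcal{H}_C(\mathfrak{N}) \subseteq \A_C(S^{\op})$.

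For the top row, the point is simply that an equivalence of categories restricts to an equivalence between any full subcategory and its essential image. By definition $\mathcal{H}_C(\mathfrak{N})$ is the essential image of the restriction $\Hom_{R^{\op}}(C,\sim)|_{\mathcal{H}(\mathfrak{N})}$, and $\mathcal{H}(\mathfrak{N})$ is a full subcategory of $\B_C(R^{\op})$; hence $\Hom_{R^{\op}}(C,\sim)$ and $- \otimes_S C$ restrict to mutually quasi-inverse equivalences between $\mathcal{H}(\mathfrak{N})$ and $\mathcal{H}_C(\mathfrak{N})$. Concretely, one checks that $- \otimes_S C$ does land in $\mathcal{H}(\mathfrak{N})$ --- either directly, using that the counit $\Hom_{R^{\op}}(C,N) \otimes_S C \xrightarrow{\ \sim\ } N$ is an isomorphism for $N \in \B_C(R^{\op})$, or by invoking the implication (b)~$\Rightarrow$~(c) of Proposition~\ref{raro}(2) --- and that the restricted unit and counit natural isomorphisms of the second Foxby equivalence live in these subcategories thanks to the inclusions established above. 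Commutativity of the square with the vertical inclusions is automatic, since the top functors are literally restrictions of the bottom ones.

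In short, there is no genuine obstacle here: the statement is a formal consequence of the definition of $\mathcal{H}_C(\mathfrak{N})$ and of Proposition~\ref{raro}, exactly as indicated in the remark preceding it; the only point worth spelling out in a full write-up is the well-definedness of $- \otimes_S C$ on $\mathcal{H}_C(\mathfrak{N})$, which is precisely where Proposition~\ref{raro}(2) enters.
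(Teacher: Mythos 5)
Your proof is correct and follows the same route the paper intends: the paper states the corollary is ``immediate from the definition of $\mathcal{H}_C(\mathfrak{N})$ and the previous proposition,'' and your argument spells out exactly that --- the vertical inclusions from $\mathcal{H}(\mathfrak{N}) \subseteq \mathfrak{N} \subseteq \B_C(R^{\rm op})$ and the second Foxby equivalence, and the top row by restricting an equivalence to a full subcategory and its essential image, with well-definedness of $- \otimes_S C$ on $\mathcal{H}_C(\mathfrak{N})$ supplied by the counit isomorphism or equivalently Proposition~\ref{raro}(2)(b)$\Rightarrow$(c). No gaps; this matches the paper's argument.
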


As mentioned before, $(\mathfrak{M,N})$ is a duality pair over $R$. The following relations can be proved as in Propositions \ref{duality_Ginj} and \ref{GF-Equiv}, and using Proposition \ref{raro}. \newpage

\begin{proposition}[Gorenstein injective and flat Foxby equivalences for cores]\label{prop:Foxby-corazon}
Let ${}_S C_R$ be faithfully semidualizing. If $M \in \B_{C}(R^{\rm op})$ and $N \in \A_C(S^{\rm op})$, then:
\begin{enumerate}
\item $N \in \GI_{(\mathcal{H}_C(\mathfrak{N}),\N^C(S^{\rm op}))}$ if, and only if $N \otimes_S C \in \GI_{(\mathcal{H}(\mathfrak{N}),\mathfrak{N})}$.

\item $M \in \GI_{(\mathcal{H}(\mathfrak{N}),\mathfrak{N})}$ if, and only if, $\Hom_{R^{\op}}(C,M) \in \GI_{(\mathcal{H}_C(\mathfrak{N}),\N^C(S^{\rm op}))}$.
\end{enumerate} 
If $N \in \B_C(S)$ and $M \in \A_C(R)$, then:
\begin{enumerate}
\setcounter{enumi}{2}
\item $N \in \GF_{(\M^C(S), \mathcal{H}_C(\mathfrak{N}))}$ if, and only if, $\Hom_S(C, N) \in \GF_{(\mathfrak{M}, \mathcal{H}(\mathfrak{N}))}$.

\item $M \in \GF_{(\mathfrak{M}, \mathcal{H}(\mathfrak{N}))}$ if, and only if, $C \otimes_R M \in \GF_{(\M ^C(S), \mathcal{H}_C(\mathfrak{N}))}$.
\end{enumerate}
\end{proposition}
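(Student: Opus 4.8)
The plan is to mimic the proofs of Propositions \ref{duality_Ginj} and \ref{GF-Equiv} almost verbatim, replacing the role of $\N^C(S^{\rm op})$ by $\mathcal{H}_C(\mathfrak{N})$ on the ``test'' side of the relative Gorenstein injectivity (and similarly for the flat statements), and using Proposition \ref{raro} to control how the cores transfer under Foxby equivalences. For part (1), given $N \in \A_C(S^{\rm op}) \cap \GI_{(\mathcal{H}_C(\mathfrak{N}),\N^C(S^{\rm op}))}$, I take a complex $Y_\bullet$ with components in $\N^C(S^{\rm op})$, coresolving $N$, such that $\Hom_{S^{\rm op}}(H, Y_\bullet)$ is exact for every $H \in \mathcal{H}_C(\mathfrak{N})$. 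Since ${}_{R^{\rm op}}C_{S^{\rm op}}$ is faithfully semidualizing, $\A_C(S^{\rm op})$ is thick, so by Lemma \ref{Lema1}(2) the complex $Y_\bullet$ has cycles in $\A_C(S^{\rm op})$; applying $-\otimes_S C$ and invoking $\mathsf{(a1)}$ and Proposition \ref{Equivalencia1} yields an exact complex $Y_\bullet \otimes_S C$ with components in $\mathfrak{N}$ and $N \otimes_S C \simeq \Ker(\partial^0 \otimes_S C)$. The remaining point — exactness of $\Hom_{R^{\rm op}}(H', Y_\bullet \otimes_S C)$ for $H' \in \mathcal{H}(\mathfrak{N})$ — follows from Lemma \ref{lem:HolmWhite_for_MN}(2) together with the $2^{\rm nd}$ Foxby equivalence, which give a natural isomorphism $\Hom_{R^{\rm op}}(H', Y_\bullet \otimes_S C) \simeq \Hom_{S^{\rm op}}(\Hom_{R^{\rm op}}(C,H'), Y_\bullet)$, where $\Hom_{R^{\rm op}}(C,H') \in \mathcal{H}_C(\mathfrak{N})$ by definition of that class (or by Proposition \ref{raro}(2)). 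The converse direction reverses this: if $N \otimes_S C \in \GI_{(\mathcal{H}(\mathfrak{N}),\mathfrak{N})}$, thickness of $\B_C(R^{\rm op})$ makes the coresolving complex have cycles in $\B_C(R^{\rm op})$, so $\mathsf{(b1)}$ lets $\Hom_{R^{\rm op}}(C,-)$ preserve exactness, and $N \simeq \Hom_{R^{\rm op}}(C, N\otimes_S C)$ since $N \in \A_C(S^{\rm op})$; the test condition transfers back by the same natural isomorphism.

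Part (2) is then formal: since $M \in \B_C(R^{\rm op})$ we have $M \simeq \Hom_{R^{\rm op}}(C,M) \otimes_S C$, so applying part (1) to $N := \Hom_{R^{\rm op}}(C,M) \in \A_C(S^{\rm op})$ gives the equivalence. For parts (3) and (4), the argument is the Gorenstein-flat analogue, parallel to Proposition \ref{GF-Equiv}: given $N \in \B_C(S) \cap \GF_{(\M^C(S),\mathcal{H}_C(\mathfrak{N}))}$, pick an exact complex $X_\bullet$ with components in $\M^C(S)$, totally acyclic relative to $\mathcal{H}_C(\mathfrak{N})$, resolving $N$; thickness of $\B_C(S)$ forces cycles in $\B_C(S)$, so $\Hom_S(C,X_\bullet)$ is exact (by $\mathsf{(b1)}$) with components in $\mathfrak{M}$ (by Proposition \ref{Equivalencia1}); the test condition $H \otimes_R \Hom_S(C,X_\bullet)$ for $H \in \mathcal{H}(\mathfrak{N})$ is handled by Lemma \ref{lem:HolmWhite_for_MN}(3), giving $H \otimes_R \Hom_S(C,X_\bullet) \simeq \Hom_{R^{\rm op}}(C,H) \otimes_S X_\bullet$, and $\Hom_{R^{\rm op}}(C,H) \in \mathcal{H}_C(\mathfrak{N})$. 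The ``if'' direction applies $C \otimes_R -$ to an $(\mathfrak{M},\mathcal{H}(\mathfrak{N}))$-totally acyclic complex, using thickness of $\A_C(R)$ and $\mathsf{(a1)}$, and $M \simeq C \otimes_R \Hom_S(C,M)$ for $M \in \A_C(R)$. Part (4) follows from part (3) by the same formal substitution as (2) followed from (1).

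The one genuinely new ingredient compared to Propositions \ref{duality_Ginj} and \ref{GF-Equiv} is making sure the \emph{test class} $\mathcal{H}_C(\mathfrak{N})$ is carried correctly back and forth under the Foxby functors; this is exactly what Proposition \ref{raro}(2) (equivalence of (a), (b), (c)) provides, so invoking it at the right moment is the step I expect to require the most care. In particular, one should check that when transferring the exactness condition for $Y_\bullet \otimes_S C$ one really lands in $\mathcal{H}_C(\mathfrak{N})$ (and not merely in $\N^C(S^{\rm op})$), i.e. that applying $\Hom_{R^{\rm op}}(C,-)$ to an object of $\mathcal{H}(\mathfrak{N})$ produces an object of $\mathcal{H}_C(\mathfrak{N})$ — which is true by the very definition of $\mathcal{H}_C(\mathfrak{N})$ — and conversely that every object of $\mathcal{H}_C(\mathfrak{N})$ is of this form, again by definition. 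No other obstacle is anticipated: everything else is a routine adaptation of the earlier proofs, so I would simply write ``arguing exactly as in the proofs of Propositions \ref{duality_Ginj} and \ref{GF-Equiv}, and using Proposition \ref{raro}(2), one obtains all four equivalences.''
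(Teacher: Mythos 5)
Your proposal is correct and takes exactly the approach the paper intends: the authors state that Proposition \ref{prop:Foxby-corazon} ``can be proved as in Propositions \ref{duality_Ginj} and \ref{GF-Equiv}, and using Proposition \ref{raro},'' and your write-up is a faithful and accurate unpacking of that instruction. The one point you flag as needing care---that the Foxby functors carry the test class $\mathcal{H}_C(\mathfrak{N})$ back and forth to $\mathcal{H}(\mathfrak{N})$, via the definition and Proposition \ref{raro}(2)---is indeed the only genuinely new ingredient beyond the earlier two proofs, and you handle it correctly.
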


Under certain conditions, the duality relations in Proposition \ref{Dual-P} can be improved. Namely, if $(\mathfrak{M,N})$ is product closed bicomplete, then 
\[
(\GF_{(\mathfrak{M},\mathcal{H}(\mathfrak{N}))},\GI_{(\mathcal{H}(\mathfrak{N}),\mathfrak{N})})
\] 
is a perfect duality pair. On the other hand, if we assume that the pairs $(\M,\N)$ and $(\A_C(R),\B_C(R^{\rm op}))$ are bicomplete and that $\M$ closed under products, then by \cite[Ex. 3.10 (5)]{BecerrilPerez25} $(\mathfrak{M,N})$ is bicomplete, and $\mathfrak{M}$ is closed under products since $\A_C(R)$ has this closure property by \cite[Thm. 3.3 (1)]{Huang21}\footnote{In the cited reference, the author assumes that ${}_R C_S$ is semidualizing and shows that $\A_C(R^{\rm op})$ is closed under products. In our situation, keep in mind that ${}_S C_R$ is semidualizing if, and only if, ${}_{R^{\rm op}} C_{S^{\rm op}}$ is semidualizing.}. The pair $(\A_C(R),\B_C(R^{\rm op}))$ is bicomplete (or equivalently, $({}^\perp[\B_C(R^{\rm op})],\B_C(R^{\rm op}))$ is a cotorsion pair cogenerated by a set) for instance when $C_R$ is self-small and w-tilting (see \cite[Coroll. 3.12]{BDGO21}). Keep in mind that $\B_C(R^{\rm op})$ is always coresolving, and so the pair $({}^\perp[\B_C(R^{\rm op})],\B_C(R^{\rm op}))$  is indeed hereditary. We have the following result by \cite[Coroll. 3.14]{BecerrilPerez25}.

\begin{proposition}[Gorenstein duality relations for cores]\label{prop:dualityR}
If $({}^\perp[\B_C(R^{\rm op})],\B_C(R^{\rm op}))$ is a cotorsion pair cogenerated by a set, and $(\M,\N)$ is bicomplete with $\M$ closed under products, then $\GF_{(\mathfrak{M},\mathcal{H}(\mathfrak{N}))}$ is resolving and $(\GF_{(\mathfrak{M},\mathcal{H}(\mathfrak{N}))},\GI_{(\mathcal{H}(\mathfrak{N}),\mathfrak{N})})$ is a perfect duality pair. In particular, the following holds for every $N \in \Modu (R)$:
\begin{itemize}
\item $N \in \GF_{(\mathfrak{M},\mathcal{H}(\mathfrak{N}))}$ if, and only if, $N^+ \in \GI_{(\mathcal{H}(\mathfrak{N}),\mathfrak{N})}$.
\end{itemize}
\end{proposition}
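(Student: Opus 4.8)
The plan is to reduce the statement to \cite[Coroll. 3.14]{BecerrilPerez25}, which applies to product-closed bicomplete duality pairs, so the whole proof consists in checking that $(\mathfrak{M,N})$ is such a pair over $R$ and then reading off its consequences.

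First I would unwind the hypothesis on $({}^\perp[\B_C(R^{\rm op})],\B_C(R^{\rm op}))$. Since $\B_C(R^{\rm op})$ is always coresolving, this cotorsion pair is automatically hereditary; hence the assumption that it is cogenerated by a set says exactly that $(\A_C(R),\B_C(R^{\rm op}))$ is bicomplete, that pair being already complete by \cite[Thm. 3.3]{Huang21}. With $(\M,\N)$ bicomplete by hypothesis and $\M$ closed under products, \cite[Ex. 3.10 (5)]{BecerrilPerez25} then gives that the intersection pair $(\mathfrak{M,N}) = (\A_C(R) \cap \M,\B_C(R^{\rm op}) \cap \N)$ --- a duality pair over $R$ by \cite[Prop. 2.8]{HJ09} --- is again bicomplete. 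Moreover $\mathfrak{M}$ is closed under products: $\M$ is by hypothesis, and $\A_C(R)$ is by \cite[Thm. 3.3 (1)]{Huang21} applied to the semidualizing bimodule ${}_{R^{\rm op}} C_{S^{\rm op}}$.

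Thus $(\mathfrak{M,N})$ is a product-closed bicomplete duality pair over $R$, and \cite[Coroll. 3.14]{BecerrilPerez25} immediately yields that $\GF_{(\mathfrak{M},\mathcal{H}(\mathfrak{N}))}$ is resolving and that $(\GF_{(\mathfrak{M},\mathcal{H}(\mathfrak{N}))},\GI_{(\mathcal{H}(\mathfrak{N}),\mathfrak{N})})$ is a perfect duality pair over $R$. The final ``in particular'' clause is then nothing more than condition (1) in the definition of a duality pair, now read off this last pair: for every $N \in \Modu (R)$ one has $N \in \GF_{(\mathfrak{M},\mathcal{H}(\mathfrak{N}))}$ if, and only if, $N^+ \in \GI_{(\mathcal{H}(\mathfrak{N}),\mathfrak{N})}$.

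I expect the only real obstacle to be the bicompleteness of the intersection pair, i.e.\ verifying that the hypotheses of \cite[Ex. 3.10 (5)]{BecerrilPerez25} are met --- in particular the product-closure of $\A_C(R)$, which is why the op-version of Huang's theorem enters. Everything after that is bookkeeping with the definitions of bicomplete duality pair and of $\mathcal{H}(\mathfrak{N})$.
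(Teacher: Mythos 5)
Your proposal reproduces the paper's own argument: the paper precedes the proposition with exactly this reduction, noting that $(\A_C(R),\B_C(R^{\rm op}))$ is bicomplete by the hypothesis (together with completeness from \cite[Thm.\ 3.3]{Huang21} and heredity from $\B_C(R^{\rm op})$ being coresolving), that $(\mathfrak{M,N})$ is then a product-closed bicomplete duality pair by \cite[Ex.\ 3.10(5)]{BecerrilPerez25} and \cite[Thm.\ 3.3(1)]{Huang21}, and then invoking \cite[Coroll.\ 3.14]{BecerrilPerez25}. The approach and the citations match exactly.
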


Regarding homotopical aspects, we obtain the following model structure by \cite[Thm. 5.3]{BecerrilPerez25}.

\begin{proposition}[homotopical aspects of relative Gorenstein flats respect to cores]\label{GFmodelST}
If $({}^\perp[\B_C(R^{\rm op})],\B_C(R^{\rm op}))$ is a cotorsion pair cogenerated by a set, and $(\M,\N)$ is bicomplete with $\M$ closed under products, then there exists a unique hereditary abelian model structure on $\Modu (R)$ such that $\GF_{(\mathfrak{M},\mathcal{H}(\mathfrak{N}))}$ (resp., $\mathfrak{M}$) is the class of (trivially) cofibrant objects, and $\mathfrak{M}^\perp$ (resp., $[\GF_{(\mathfrak{M},\mathcal{H}(\mathfrak{N}))}]^\perp$) is the class of (trivially) fibrant objects. Moreover, its homotopy category is triangle equivalent to the stable category 
\[
[\GF_{(\mathfrak{M},\mathcal{H}(\mathfrak{N}))} \cap \mathfrak{M}^\perp] / \sim,
\]
where for $f, g \colon M \to N$ one has that $f \sim g$ if $f - g$ factors through an object in $\mathfrak{M} \cap \mathfrak{M}^\perp$.
\end{proposition}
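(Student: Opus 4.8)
The plan is to obtain the statement as a direct application of \cite[Thm. 5.3]{BecerrilPerez25} to the duality pair $(\mathfrak{M},\mathfrak{N})$ over $R$. That theorem asserts that for any bicomplete duality pair $(\X,\Y)$ over a ring with $\X$ closed under products, the class $\GF_{(\X,\mathcal{H}(\Y))}$ is resolving and there is a unique hereditary abelian model structure in which $\GF_{(\X,\mathcal{H}(\Y))}$ (resp.\ $\X$) is the class of (trivially) cofibrant objects and $\X^\perp$ (resp.\ $[\GF_{(\X,\mathcal{H}(\Y))}]^\perp$) is the class of (trivially) fibrant objects, with homotopy category triangle equivalent to the stable category of $\GF_{(\X,\mathcal{H}(\Y))}\cap\X^\perp$ modulo the maps factoring through an object of $\X\cap\X^\perp$. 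Since for $(\X,\Y)=(\mathfrak{M},\mathfrak{N})$ one has $\mathcal{H}(\Y)={}^\perp\mathfrak{N}\cap\mathfrak{N}=\mathcal{H}(\mathfrak{N})$, matching our notation, the whole proof reduces to checking the two hypotheses of that theorem for the pair $(\mathfrak{M},\mathfrak{N})$.

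First I would verify bicompleteness of $(\mathfrak{M},\mathfrak{N})$. Recall that $(\A_C(R),\B_C(R^{\rm op}))$ is a complete duality pair over $R$ by \cite[Thm. 3.3]{Huang21}, and that $\B_C(R^{\rm op})$ is always coresolving by \cite[Thm. 6.2]{HolmWhite07}, so the pair $({}^\perp[\B_C(R^{\rm op})],\B_C(R^{\rm op}))$ is automatically hereditary; hence the standing hypothesis that this pair is cogenerated by a set says precisely that $(\A_C(R),\B_C(R^{\rm op}))$ is bicomplete. Combining this with the assumption that $(\M,\N)$ is bicomplete, \cite[Ex. 3.10 (5)]{BecerrilPerez25} gives that the intersection pair $(\mathfrak{M},\mathfrak{N})=(\A_C(R)\cap\M,\B_C(R^{\rm op})\cap\N)$ is again bicomplete. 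Next, $\mathfrak{M}$ is closed under products: indeed $\M$ is closed under products by hypothesis and $\A_C(R)$ is closed under products by \cite[Thm. 3.3 (1)]{Huang21} (bearing in mind the opposite-ring convention for semidualizing bimodules recorded in the discussion preceding Proposition \ref{prop:dualityR}), so the intersection has this property too.

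With both hypotheses established, \cite[Thm. 5.3]{BecerrilPerez25} applies verbatim to $(\mathfrak{M},\mathfrak{N})$ and delivers every assertion: the existence and uniqueness of the hereditary abelian model structure with the stated cofibrant, trivially cofibrant, fibrant and trivially fibrant classes, and the identification of its homotopy category with $[\GF_{(\mathfrak{M},\mathcal{H}(\mathfrak{N}))}\cap\mathfrak{M}^\perp]/\sim$. The fact that $\GF_{(\mathfrak{M},\mathcal{H}(\mathfrak{N}))}$ is resolving was already recorded in Proposition \ref{prop:dualityR}, and the underlying construction is the Hovey--Gillespie recipe applied to the two hereditary complete cotorsion pairs $(\mathfrak{M},\mathfrak{M}^\perp)$ and $(\GF_{(\mathfrak{M},\mathcal{H}(\mathfrak{N}))},[\GF_{(\mathfrak{M},\mathcal{H}(\mathfrak{N}))}]^\perp)$, together with the Frobenius/stable-category argument of \cite[Prop. 2.2 and Thm. 4.3]{GillespieHereditary} already used in Corollary \ref{coro:first_model_structure}. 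The only step that is not purely formal is the inheritance of bicompleteness by the intersection pair $(\mathfrak{M},\mathfrak{N})$ — in particular the requirement that the associated cotorsion pair over $R^{\rm op}$ remain cogenerated by a set — which is exactly what \cite[Ex. 3.10 (5)]{BecerrilPerez25} supplies; once that is in place, nothing further is needed.
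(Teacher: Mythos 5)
Your proof is correct and follows exactly the route the paper takes: the paper's own ``proof'' is simply the observation, laid out in the paragraph preceding Proposition \ref{prop:dualityR}, that the standing hypothesis makes $(\A_C(R),\B_C(R^{\rm op}))$ bicomplete (hereditariness coming for free from coresolvingness), that \cite[Ex.\ 3.10 (5)]{BecerrilPerez25} then gives bicompleteness of the intersection pair $(\mathfrak{M},\mathfrak{N})$, that product-closedness of $\mathfrak{M}$ follows from \cite[Thm.\ 3.3 (1)]{Huang21}, and that \cite[Thm.\ 5.3]{BecerrilPerez25} then applies directly. You identified all the same citations and the same chain of hypotheses-checking, so there is nothing to add.
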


Regarding the counterparts $\GF_{(\mathcal{M}^C(S),\mathcal{H}_C(\mathfrak{N}))}$ and $\GI_{(\mathcal{H}_C(\mathfrak{N}),\mathcal{N}^C(S^{\rm op}))}$, we look for conditions under which there is a duality relation between these classes. First, we note the following.

\begin{lemma}\label{lema-corazon}
The equality $\mathcal{H}(\mathcal{N}^C(S^{\rm op})) = \mathcal{H}_C(\mathfrak{N})$ holds.
\end{lemma}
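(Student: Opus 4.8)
The plan is to observe first that this equality is exactly the equivalence of assertions $(a)$ and $(b)$ recorded in Proposition \ref{raro}~(2), so strictly speaking nothing new is needed; nonetheless I would give a short self-contained argument based on Lemma \ref{lem:HolmWhite_for_MN}~(2). The first point is that both sides are contained in $\mathcal{N}^C(S^{\rm op})$: since $\mathcal{H}(\mathfrak{N}) = {}^\perp\mathfrak{N} \cap \mathfrak{N} \subseteq \mathfrak{N}$, the very definitions of $\mathcal{N}^C(S^{\rm op})$ and $\mathcal{H}_C(\mathfrak{N})$ as essential images give $\mathcal{H}_C(\mathfrak{N}) \subseteq \mathcal{N}^C(S^{\rm op})$, while $\mathcal{H}(\mathcal{N}^C(S^{\rm op})) = {}^\perp[\mathcal{N}^C(S^{\rm op})] \cap \mathcal{N}^C(S^{\rm op}) \subseteq \mathcal{N}^C(S^{\rm op})$ trivially. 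Moreover, by the definition of $\mathcal{N}^C(S^{\rm op})$ together with Proposition \ref{Equivalencia1}, the functor $\Hom_{R^{\rm op}}(C,\sim)$ restricts to an equivalence between $\mathfrak{N}$ and $\mathcal{N}^C(S^{\rm op})$, so every object of $\mathcal{N}^C(S^{\rm op})$ has the form $\Hom_{R^{\rm op}}(C,B)$ with $B \in \mathfrak{N}$.

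The second step is to transport total Ext-orthogonality across this equivalence via the natural isomorphism
\[
\Ext^i_{S^{\rm op}}(\Hom_{R^{\rm op}}(C,B),\Hom_{R^{\rm op}}(C,B')) \simeq \Ext^i_{R^{\rm op}}(B,B'), \qquad B,B' \in \mathfrak{N},\ i \geq 0,
\]
supplied by Lemma \ref{lem:HolmWhite_for_MN}~(2). For the inclusion $\mathcal{H}_C(\mathfrak{N}) \subseteq \mathcal{H}(\mathcal{N}^C(S^{\rm op}))$ I would take $M \simeq \Hom_{R^{\rm op}}(C,N)$ with $N \in {}^\perp\mathfrak{N} \cap \mathfrak{N}$, write an arbitrary $M' \in \mathcal{N}^C(S^{\rm op})$ as $\Hom_{R^{\rm op}}(C,B')$ with $B' \in \mathfrak{N}$, and use the displayed isomorphism to turn $\Ext^{\geq 1}_{R^{\rm op}}(N,B') = 0$ into $\Ext^{\geq 1}_{S^{\rm op}}(M,M') = 0$, concluding $M \in {}^\perp[\mathcal{N}^C(S^{\rm op})] \cap \mathcal{N}^C(S^{\rm op})$. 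For the reverse inclusion, given $M \in \mathcal{H}(\mathcal{N}^C(S^{\rm op}))$, I would write $M \simeq \Hom_{R^{\rm op}}(C,N)$ with $N \in \mathfrak{N}$; then $\Hom_{R^{\rm op}}(C,B') \in \mathcal{N}^C(S^{\rm op})$ for every $B' \in \mathfrak{N}$, and the same isomorphism converts $\Ext^{\geq 1}_{S^{\rm op}}(M,\Hom_{R^{\rm op}}(C,B')) = 0$ into $\Ext^{\geq 1}_{R^{\rm op}}(N,B') = 0$, so $N \in {}^\perp\mathfrak{N} \cap \mathfrak{N} = \mathcal{H}(\mathfrak{N})$ and hence $M \in \mathcal{H}_C(\mathfrak{N})$.

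I do not expect a genuine obstacle here: once Lemma \ref{lem:HolmWhite_for_MN}~(2) is in hand, the proof is a purely formal transport of an orthogonality condition along an equivalence of categories. The only point requiring minor care is to apply the Ext-isomorphism on the correct classes, which is precisely what the identification of the objects of $\mathcal{N}^C(S^{\rm op})$ with the modules $\Hom_{R^{\rm op}}(C,B)$, $B \in \mathfrak{N}$, coming from Proposition \ref{Equivalencia1}, guarantees.
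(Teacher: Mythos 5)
Your proof is correct and follows essentially the same route as the paper: both directions reduce to transporting total Ext-orthogonality along the restricted Foxby equivalence via the natural isomorphism of Lemma \ref{lem:HolmWhite_for_MN}~(2), with the identification of objects of $\mathcal{N}^C(S^{\rm op})$ as modules $\Hom_{R^{\rm op}}(C,B)$, $B \in \mathfrak{N}$. The paper spells out only the inclusion $\mathcal{H}(\mathcal{N}^C(S^{\rm op})) \subseteq \mathcal{H}_C(\mathfrak{N})$ and declares the other "similar," whereas you give both; your preliminary observation that the claim coincides with the equivalence $(a) \Leftrightarrow (b)$ of Proposition \ref{raro}~(2) is accurate but, since that proposition is only cited rather than proved in the paper, the direct argument is still worthwhile.
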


\begin{proof}
First, let $N \in \mathcal{H}(\mathcal{N}^C(S^{\rm op}))$. Then, $N \simeq \Hom_{R^{\rm op}}(C,B)$ with $B \in \mathfrak{N}$ and $N \in {}^\perp[\mathcal{N}^C(S^{\rm op})]$. We can note that $B \in \mathcal{H}(\mathfrak{N})$. Indeed, let $B' \in \mathfrak{N}$. Then by Lemma \ref{lem:HolmWhite_for_MN} (2), we have
\[
\Ext^i_{R^{\rm op}}(B,B') \simeq \Ext^i_{S^{\rm op}}(N,\Hom_{R^{\rm op}}(C,B')) = 0
\]
since $\Hom_{R^{\rm op}}(C,B') \in \mathcal{N}^C(S^{\rm op})$. Hence, $B \in \mathcal{H}(\mathfrak{N})$ and so $N \in \mathcal{H}_C(\mathfrak{N})$.

The other containment can be proved similarly. 
\end{proof}

By the previous lemma, note that $\GF_{(\mathcal{M}^C(S),\mathcal{H}_C(\mathfrak{N}))} = \GF_{(\mathcal{M}^C(S),\mathcal{H}(\mathcal{N}^C(S^{\rm op})))}$ and $\GI_{(\mathcal{H}_C(\mathfrak{N}),\mathcal{N}^C(S^{\rm op}))} = \GI_{(\mathcal{H}(\mathcal{N}^C(S^{\rm op})),\mathcal{N}^C(S^{\rm op}))}$. Thus, $\GF_{(\mathcal{M}^C(S),\mathcal{H}_C(\mathfrak{N}))}$ and $\GI_{(\mathcal{H}_C(\mathfrak{N}),\mathcal{N}^C(S^{\rm op}))}$ are the relative Gorenstein flat and relative Gorenstein injective modules covered in \cite{BecerrilPerez25}, induced by the duality pair $(\M^C(S),\N^C(S^{\rm op}))$. However, we are not aware under which condition the previous duality pair is bicomplete, so we cannot obtain the duality relation described in Proposition \ref{prop:dualityR} via bicompleteness of $(\M^C(S),\N^C(S^{\rm op}))$. There is one way to overcome this limitation via Foxby equivalences if duality is restricted to the Auslander and Bass classes.

\begin{proposition}\label{otra_dualidad}
If $M \in \GF_{(\M^C(S),\mathcal{H}_C(\mathfrak{N}))}$, then $M^+ \in \GI_{(\mathcal{H}_C(\mathfrak{N}),\mathcal{N}^C(S^{\rm op}))}$. Furthermore, the converse holds for every $M \in \B_C(S)$ if in addition ${}_S C_R$ is faithfully semidualizing, $({}^\perp[\B_C(R^{\rm op})],\B_C(R^{\rm op}))$ is a cotorsion pair cogenerated by a set, and $(\M,\N)$ is bicomplete with $\M$ closed under products.
\end{proposition}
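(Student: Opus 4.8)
The plan is to mirror the two-part structure of Proposition \ref{Dual-P}, but with the core classes in place of the full classes, using the bridge identity $\mathcal{H}(\mathcal{N}^C(S^{\rm op})) = \mathcal{H}_C(\mathfrak{N})$ from Lemma \ref{lema-corazon}. First I would prove the unconditional implication $M^+ \in \GI_{(\mathcal{H}_C(\mathfrak{N}),\mathcal{N}^C(S^{\rm op}))}$ whenever $M \in \GF_{(\M^C(S),\mathcal{H}_C(\mathfrak{N}))}$. Given $M \simeq \Ker(\partial^0)$ in an exact complex $X_\bullet$ with components in $\M^C(S)$ such that $H \otimes_S X_\bullet$ is exact for every $H \in \mathcal{H}_C(\mathfrak{N})$, I would dualize to obtain $X_\bullet^+$, which by Proposition \ref{Dualidad1} is an exact complex with components in $\mathcal{N}^C(S^{\rm op})$ and $M^+ \simeq \Ker((\partial_0)^+)$. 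The remaining point is that $\Hom_{S^{\rm op}}(H, X_\bullet^+)$ is exact for every $H \in \mathcal{H}_C(\mathfrak{N})$; by adjunction $\Hom_{S^{\rm op}}(H, X_\bullet^+) \simeq (H \otimes_S X_\bullet)^+$, and the right-hand side is exact because $H \otimes_S X_\bullet$ is. This is the same computation as in the proof of Proposition \ref{Dual-P}, just with the test class restricted to $\mathcal{H}_C(\mathfrak{N})$, so it goes through verbatim. (Note $\mathcal{H}_C(\mathfrak{N}) \subseteq \mathcal{N}^C(S^{\rm op})$, so the definition of $(\mathcal{H}_C(\mathfrak{N}),\mathcal{N}^C(S^{\rm op}))$-Gorenstein injectivity is the sensible one.)

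For the converse, I would assume the additional hypotheses and reduce to the already-established Foxby equivalences. Suppose $M \in \B_C(S)$ and $M^+ \in \GI_{(\mathcal{H}_C(\mathfrak{N}),\mathcal{N}^C(S^{\rm op}))}$. Since $\B_C(S)$ is a duality pair half with $\B_C(R^{\rm op})$, we have $M^+ \in \A_C(S^{\rm op})$, and thus by Proposition \ref{prop:Foxby-corazon} (2) applied on the appropriate side — or more directly by the core version of Proposition \ref{duality_Ginj} — we get $M^+ \otimes_S C \in \GI_{(\mathcal{H}(\mathfrak{N}),\mathfrak{N})}$. Meanwhile, by \cite[Thm. 3.2.11]{EnochsJenda00} and the fact that $C$ is finitely presented over $S$, $M^+ \otimes_S C \simeq [\Hom_S(C,M)]^+$. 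Under the stated hypotheses — $(\M,\N)$ bicomplete with $\M$ closed under products and $({}^\perp[\B_C(R^{\rm op})],\B_C(R^{\rm op}))$ cogenerated by a set — Proposition \ref{prop:dualityR} tells us that $(\GF_{(\mathfrak{M},\mathcal{H}(\mathfrak{N}))},\GI_{(\mathcal{H}(\mathfrak{N}),\mathfrak{N})})$ is a perfect duality pair; in particular $[\Hom_S(C,M)]^+ \in \GI_{(\mathcal{H}(\mathfrak{N}),\mathfrak{N})}$ forces $\Hom_S(C,M) \in \GF_{(\mathfrak{M},\mathcal{H}(\mathfrak{N}))}$. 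Then Proposition \ref{prop:Foxby-corazon} (4) (after first checking $\Hom_S(C,M) \in \A_C(R)$, which follows since $M \in \B_C(S)$ via the first Foxby equivalence) gives $C \otimes_R \Hom_S(C,M) \in \GF_{(\M^C(S),\mathcal{H}_C(\mathfrak{N}))}$, and $C \otimes_R \Hom_S(C,M) \simeq M$ by $\mathsf{(b3)}$. Hence $M \in \GF_{(\M^C(S),\mathcal{H}_C(\mathfrak{N}))}$.

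The main obstacle I anticipate is bookkeeping rather than mathematical depth: one must be careful that the $\GI$ over cores of Proposition \ref{duality_Ginj} (i.e., Proposition \ref{prop:Foxby-corazon} (1)–(2)) applies with the correct rings and duality pair, since the character-dual of an $S$-module lives in $\Modu(S^{\rm op})$ and one is alternating between $S$ and $R$ throughout. A secondary subtlety is verifying the membership conditions $M^+ \in \A_C(S^{\rm op})$ and $\Hom_S(C,M) \in \A_C(R)$ that are needed to invoke the biconditionals of Proposition \ref{prop:Foxby-corazon}; both follow from $M \in \B_C(S)$ together with the Foxby equivalences and the compatibility of $(-)^+$ with the Auslander–Bass duality pairs, but they should be stated explicitly. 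No genuinely new estimate is required; the proposition is essentially the composite of the unconditional Pontryagin direction, the core Foxby equivalences, and the perfect-duality-pair conclusion of Proposition \ref{prop:dualityR}.
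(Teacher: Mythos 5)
Your proposal is correct and follows essentially the same route as the paper: the first direction is the Pontryagin-dual argument of Proposition \ref{Dual-P} (1) restricted to the core test class, and the converse is exactly the paper's chain of Proposition \ref{prop:Foxby-corazon} (1), the swap $M^{+}\otimes_{S}C\simeq[\Hom_{S}(C,M)]^{+}$, the perfect duality pair of Proposition \ref{prop:dualityR}, and Proposition \ref{prop:Foxby-corazon} (4), differing only in that the paper writes $M\simeq C\otimes_{R}A$ and works with $A$ where you work with $\Hom_{S}(C,M)$ (the same module). Two small bookkeeping slips worth fixing: the step deducing $M^{+}\otimes_{S}C\in\GI_{(\mathcal{H}(\mathfrak{N}),\mathfrak{N})}$ is Proposition \ref{prop:Foxby-corazon} (1), not (2); and $M^{+}\in\A_{C}(S^{\rm op})$ comes from the symmetric duality pair $(\A_{C}(S^{\rm op}),\B_{C}(S))$ over $S^{\rm op}$, not from a pairing of $\B_{C}(S)$ with $\B_{C}(R^{\rm op})$.
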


\begin{proof}
The first assertion follows as Proposition \ref{Dual-P} (1). In order to show the converse, let $M \in \B_C(S)$ such that $M^+ \in \GI_{(\mathcal{H}_C(\mathfrak{N}),\mathcal{N}^C(S^{\rm op}))}$. Then by Proposition \ref{prop:Foxby-corazon} (1) we have that $M^+ \simeq \Hom_{R^{\rm op}}(C,B)$ for some $B \in \B_C(R^{\rm op}) \cap \GI_{(\mathcal{H}(\mathfrak{N}),\mathfrak{N})}$. On the other hand, using the first Foxby equivalence, we can write $M \simeq C \otimes_R A$ with $A \in \A_C(R)$. Then, $M^+ \simeq (C \otimes_R A)^+ \simeq \Hom_{R^{\rm op}}(C,A^+)$. Since
\[
A^+ \simeq \Hom_{R^{\rm op}}(C,A^+) \otimes_S C \simeq \Hom_{R^{\rm op}}(C,B) \otimes_S C \simeq B \in B_C(R^{\rm op}) \cap \GI_{(\mathcal{H}(\mathfrak{N}),\mathfrak{N})},
\]
we have by Proposition \ref{prop:dualityR} that $A \in \GF_{(\mathfrak{M},\mathcal{H}(\mathfrak{N}))}$, and hence $M \simeq C \otimes_R A \in \GF_{(\M^C(S),\mathcal{H}_C(\mathfrak{N}))}$ by Proposition \ref{prop:Foxby-corazon} (4). 
\end{proof}

\begin{corollary}[Gorenstein flat Foxby equivalences and duality for cores]\label{relationGFvsGI}
If ${}_S C_R$ is faithfully semidualizing, $({}^\perp[\B_C(R^{\rm op})],\B_C(R^{\rm op}))$ is a cotorsion pair cogenerated by a set, and $(\M,\N)$ is bicomplete with $\M$ closed under products, then we have the following diagram where the horizontal arrows are equivalences: 
\[
\begin{tikzpicture}[description/.style={fill=white,inner sep=2pt}] 
\matrix (m) [ampersand replacement=\&, matrix of math nodes, row sep=8em, column sep=3.5em, text height=1.25ex, text depth=0.25ex] 
{ 
\A_C(R) \cap \GF_{(\mathfrak{M}, \He(\mathfrak{N}))}  \& \B_C(S) \cap \GF_{(\mathcal{M}^C(S), \He_C(\mathfrak{N}))} \\
B_C(R^{\op}) \cap \GI_{(\He(\mathfrak{N}), \mathfrak{N})} \& \A_C(S^{\op}) \cap \GI_{(\He_C(\mathfrak{N}), \mathcal{N}^C(S^{\rm op}))}  \\
}; 
\path[->] 
(m-1-1) edge [bend left = 15] node[above] {\footnotesize$C \otimes_R \sim$} node[below] {\footnotesize$\sim$} (m-1-2)
(m-1-2) edge [bend left = 15] node[above] {\footnotesize$\sim$} node[below] {\footnotesize$\Hom_S(C,\sim)$} (m-1-1)
(m-2-2) edge [bend right = 15] node[above] {\footnotesize$- \otimes_S C$} node[below] {\footnotesize$\sim$} (m-2-1)
(m-2-1) edge [bend right = 15] node[above] {\footnotesize$\sim$} node[below] {\footnotesize$\Hom_{R^{\rm op}}(C,\sim)$} (m-2-2)
(m-1-1) edge node[left] {\footnotesize$(-)^+$} (m-2-1)
(m-1-2) edge node[right] {\footnotesize$(-)^+$} (m-2-2)
;
\end{tikzpicture} 
\]
\end{corollary}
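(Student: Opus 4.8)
The plan is to realize the two horizontal rows as restrictions of the first and second Foxby equivalences, and then to check that the vertical Pontryagin duals land in the prescribed intersections and make the resulting square commute up to natural isomorphism. First I would treat the top row. By the first Foxby equivalence, $C\otimes_R\sim\colon\A_C(R)\to\B_C(S)$ is an equivalence with quasi-inverse $\Hom_S(C,\sim)$. By Proposition \ref{prop:Foxby-corazon}~(4), if $A\in\A_C(R)\cap\GF_{(\mathfrak{M},\He(\mathfrak{N}))}$ then $C\otimes_R A\in\B_C(S)\cap\GF_{(\M^C(S),\He_C(\mathfrak{N}))}$; conversely, given $M\in\B_C(S)\cap\GF_{(\M^C(S),\He_C(\mathfrak{N}))}$, write $M\simeq C\otimes_R A$ with $A\simeq\Hom_S(C,M)\in\A_C(R)$, and Proposition \ref{prop:Foxby-corazon}~(3) forces $A\in\GF_{(\mathfrak{M},\He(\mathfrak{N}))}$. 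Hence the essential image of the restricted functor is exactly $\B_C(S)\cap\GF_{(\M^C(S),\He_C(\mathfrak{N}))}$, and since the unit and counit of the first Foxby equivalence restrict to natural isomorphisms, the top row is an equivalence with quasi-inverse $\Hom_S(C,\sim)$. The bottom row is obtained identically from the second Foxby equivalence $-\otimes_S C\colon\A_C(S^{\rm op})\to\B_C(R^{\rm op})$ together with Proposition \ref{prop:Foxby-corazon}~(1) and~(2).

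Next I would check that the vertical $(-)^+$ functors are well defined with the stated codomains. On the left, if $N\in\A_C(R)$ then $N^+\in\B_C(R^{\rm op})$ because $(\A_C(R),\B_C(R^{\rm op}))$ is a duality pair over $R$, and if in addition $N\in\GF_{(\mathfrak{M},\He(\mathfrak{N}))}$ then $N^+\in\GI_{(\He(\mathfrak{N}),\mathfrak{N})}$ by Proposition \ref{prop:dualityR}. On the right, if $M\in\B_C(S)$ then $M^+\in\A_C(S^{\rm op})$ because the complete (hence symmetric) duality pair $(\A_C(S^{\rm op}),\B_C(S))$ makes $(\B_C(S),\A_C(S^{\rm op}))$ a duality pair over $S$, and if in addition $M\in\GF_{(\M^C(S),\He_C(\mathfrak{N}))}$ then $M^+\in\GI_{(\He_C(\mathfrak{N}),\N^C(S^{\rm op}))}$ by Proposition \ref{otra_dualidad}. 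All the hypotheses invoked here, namely ${}_SC_R$ faithfully semidualizing, $({}^\perp[\B_C(R^{\rm op})],\B_C(R^{\rm op}))$ cogenerated by a set, and $(\M,\N)$ bicomplete with $\M$ product-closed, are exactly those of the corollary.

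Finally, commutativity of the square reduces to a single natural isomorphism: for $N\in\A_C(R)$ one has $(C\otimes_R N)^+\simeq\Hom_{R^{\rm op}}(C,N^+)$ by \cite[Thm. 3.2.1]{EnochsJenda00}, and since $N^+\in\B_C(R^{\rm op})$ the evaluation map of the second Foxby equivalence gives $\Hom_{R^{\rm op}}(C,N^+)\otimes_S C\simeq N^+$; chaining these, $(C\otimes_R N)^+\otimes_S C\simeq N^+$ naturally in $N$, which says precisely that going right, then down, then back left along the bottom arrow agrees with the left vertical arrow. I expect the only genuinely delicate point to be the essential surjectivity of the restricted Foxby functors onto the \emph{intersection} classes rather than onto $\B_C(S)$ or $\B_C(R^{\rm op})$ alone; but this is exactly what the argument with Proposition \ref{prop:Foxby-corazon} above provides, and everything else is routine bookkeeping with the two Foxby equivalences and the isomorphism $(C\otimes_R-)^+\simeq\Hom_{R^{\rm op}}(C,(-)^+)$.
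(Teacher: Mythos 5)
Your proof is correct and follows essentially the same path the paper intends for this corollary, which is stated without an explicit proof as a summary of Propositions \ref{prop:Foxby-corazon}, \ref{prop:dualityR}, and \ref{otra_dualidad} together with the symmetry of the Auslander--Bass duality pairs. You correctly assemble the top and bottom horizontal equivalences from the restricted Foxby equivalences plus the four ``if and only if'' clauses of Proposition \ref{prop:Foxby-corazon}, and the well-definedness of the two vertical $(-)^+$ arrows from Propositions \ref{prop:dualityR} and \ref{otra_dualidad} together with the fact that $(\A_C(R),\B_C(R^{\rm op}))$ and $(\A_C(S^{\rm op}),\B_C(S))$ are symmetric duality pairs; the hypotheses you invoke are exactly those of the corollary. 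The final paragraph on commutativity via $(C\otimes_R N)^+ \simeq \Hom_{R^{\rm op}}(C,N^+)$ is a correct (and welcome) addition, though the corollary's statement only asserts that the horizontal arrows are equivalences, so it is not strictly required.
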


Our last versions of relative Gorenstein injective and relative Gorenstein flat come precisely from the definition of Gorenstein $G_C\text{-FP}_n$-injective mentioned at the beginning of this section. Our approach to these versions consider dual Cohen-Macaulay objects, defined by Beligiannis and Reiten in \cite[Ch. V, \S 4]{BR07}. Given an object $M$ and a \emph{self-orthogonal} class $\omega$ of objects of an abelian category $\mathcal{C}$, that is $\Ext^{\geq 1}_{\mathcal{C}}(\omega,\omega) = 0$, one says that $M$ is \emph{dual Cohen-Macaulay} relative to $\omega$ if  $M \in \omega^\perp$ and there exists an $\omega$-resolution 
\[
\cdots \to W_1 \xrightarrow{\partial_1} W_0 \xrightarrow{\partial_0} M \to 0
\]
such that $\Ker(\partial_k) \in \omega^\perp$ for every $k \in \mathbb{Z}_{\geq 0}$.

\begin{lemma}\label{orthogonal_core}
The following assertions hold:
\begin{enumerate}
\item $\Ext^{\geq 1}_{S^{\rm op}}(\mathcal{N}^C(S^{\rm op}), \mathcal{H}_C(\mathfrak{N})) = 0$. In particular, $\mathcal{H}_C(\mathfrak{N})$ is self-orthogonal.
\item There is an equivalence of categories 
\[
\begin{tikzpicture}[description/.style={fill=white,inner sep=2pt}] 
\matrix (m) [ampersand replacement=\&, matrix of math nodes, row sep=5em, column sep=4.5em, text height=1.25ex, text depth=0.25ex] 
{ 
[\mathcal{H}_C(\mathfrak{N})]^\perp \cap \A_C(S^{\rm op}) \& \B_C(R^{\rm op}) \cap [\mathcal{H}(\mathfrak{N})]^\perp \\
}; 
\path[->] 
(m-1-1) edge [bend left = 15] node[above] {\footnotesize$- \otimes_S C$} node[below] {\footnotesize$\sim$} (m-1-2)
(m-1-2) edge [bend left = 15] node[above] {\footnotesize$\sim$} node[below] {\footnotesize$\Hom_{R^{\rm op}}(C,\sim)$} (m-1-1)
;
\end{tikzpicture} 
\]
\item If in addition $\mathcal{N}$ is closed under extensions, then so is $\mathcal{H}_C(\mathfrak{N})$.
\end{enumerate}
\end{lemma}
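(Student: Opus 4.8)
The plan is to transport each assertion through the second Foxby equivalence $\A_C(S^{\rm op}) \rightleftarrows \B_C(R^{\rm op})$, combining the $\Ext$-isomorphisms of \cite[Thm. 6.4]{HolmWhite07} (of which Lemma \ref{lem:HolmWhite_for_MN} is the restriction to $\mathfrak{M}$ and $\mathfrak{N}$) with the descriptions $\mathcal{H}_C(\mathfrak{N}) = \mathcal{H}(\mathcal{N}^C(S^{\rm op})) = {}^\perp[\mathcal{N}^C(S^{\rm op})] \cap \mathcal{N}^C(S^{\rm op})$ already established in Lemma \ref{lema-corazon} and Proposition \ref{raro} (2).

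For (1), applying Lemma \ref{lem:HolmWhite_for_MN} (2) converts the vanishing of $\Ext^{\geq 1}_{S^{\rm op}}$ between $\mathcal{N}^C(S^{\rm op})$ and $\mathcal{H}_C(\mathfrak{N})$ into the vanishing of $\Ext^{\geq 1}_{R^{\rm op}}$ between $\mathfrak{N}$ and its core $\mathcal{H}(\mathfrak{N}) = {}^\perp\mathfrak{N} \cap \mathfrak{N}$, which holds by the definition of $\mathcal{H}(\mathfrak{N})$; equivalently, the claim is read off at once from $\mathcal{H}_C(\mathfrak{N}) = {}^\perp[\mathcal{N}^C(S^{\rm op})] \cap \mathcal{N}^C(S^{\rm op})$. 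Since $\mathcal{H}(\mathfrak{N}) \subseteq \mathfrak{N}$ forces $\mathcal{H}_C(\mathfrak{N}) \subseteq \mathcal{N}^C(S^{\rm op})$, the self-orthogonality of $\mathcal{H}_C(\mathfrak{N})$ follows immediately from this $\Ext$-vanishing.

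For (2), the idea is to restrict the second Foxby equivalence and verify it carries $[\mathcal{H}_C(\mathfrak{N})]^\perp \cap \A_C(S^{\rm op})$ into $\B_C(R^{\rm op}) \cap [\mathcal{H}(\mathfrak{N})]^\perp$, and $\Hom_{R^{\rm op}}(C,\sim)$ back again; the quasi-inverse identities are then inherited from the unrestricted equivalence. The point is that for $N \in \A_C(S^{\rm op})$ one has $N \simeq \Hom_{R^{\rm op}}(C, N \otimes_S C)$ with $N \otimes_S C \in \B_C(R^{\rm op})$, and since every $N' \in \mathcal{H}(\mathfrak{N})$ lies in $\B_C(R^{\rm op})$, \cite[Thm. 6.4]{HolmWhite07} yields natural isomorphisms $\Ext^i_{S^{\rm op}}(\Hom_{R^{\rm op}}(C,N'), N) \simeq \Ext^i_{R^{\rm op}}(N', N \otimes_S C)$ for all $i \geq 1$; letting $N'$ range over $\mathcal{H}(\mathfrak{N})$ gives $N \in [\mathcal{H}_C(\mathfrak{N})]^\perp$ if and only if $N \otimes_S C \in [\mathcal{H}(\mathfrak{N})]^\perp$, and reading the same identity the other way handles $\Hom_{R^{\rm op}}(C,\sim)$. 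I expect this to be the main (mild) obstacle: one must use the full force of \cite[Thm. 6.4]{HolmWhite07} here, since $N \otimes_S C$ need not lie in $\mathcal{N}$, hence not in $\mathfrak{N}$, so the restricted Lemma \ref{lem:HolmWhite_for_MN} is not enough.

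For (3), I would use $\mathcal{H}_C(\mathfrak{N}) = {}^\perp[\mathcal{N}^C(S^{\rm op})] \cap \mathcal{N}^C(S^{\rm op})$ from Lemma \ref{lema-corazon}: a total left $\Ext$-orthogonal complement is closed under extensions by the long exact sequence in $\Ext$, and $\mathcal{N}^C(S^{\rm op})$ is closed under extensions by Proposition \ref{Prop0} (1) as soon as $\mathcal{N}$ is; hence the intersection $\mathcal{H}_C(\mathfrak{N})$ is closed under extensions. One could instead transport extension-closure of $\mathcal{H}(\mathfrak{N}) = {}^\perp\mathfrak{N}\cap\mathfrak{N}$ through $\Hom_{R^{\rm op}}(C,\sim)$ via Proposition \ref{raro} (2), but the displayed argument is shorter.
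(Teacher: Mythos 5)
Your proof is correct and follows the same overall route the paper intends: the paper's own proof consists of the two terse sentences ``Parts (1) and (3) follow by Proposition~\ref{Prop0}~(1) and Lemma~\ref{lema-corazon}. Part~(2), on the other hand, follows by Lemma~\ref{lem:HolmWhite_for_MN},'' and your argument is essentially an unpacking of those citations. Two remarks are worth making, one in your favour and one of caution.

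Your observation on part~(2) is an improvement over the paper's stated justification. The paper cites Lemma~\ref{lem:HolmWhite_for_MN}, but as you note, that lemma is only formulated for $B,B' \in \mathcal{B}_C(R^{\rm op}) \cap \mathcal{N}$, and in part~(2) one must apply the $\Ext$-isomorphism with one argument $N \otimes_S C \in \mathcal{B}_C(R^{\rm op})$ that need not lie in $\mathcal{N}$; so the unrestricted version of \cite[Thm.~6.4]{HolmWhite07} is what is actually needed. The paper itself is aware of this distinction in the companion result Lemma~\ref{raro2}, whose proof cites \cite[Thm.~6.4(a)]{HolmWhite07} rather than Lemma~\ref{lem:HolmWhite_for_MN}, so here you have simply corrected a slightly imprecise citation.

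On part~(1), be careful with the direction of the $\Ext$. Both the lemma's statement and your paraphrase ask for $\Ext^{\geq 1}_{S^{\rm op}}(\mathcal{N}^C(S^{\rm op}), \mathcal{H}_C(\mathfrak{N})) = 0$, but what the identity $\mathcal{H}_C(\mathfrak{N}) = {}^\perp[\mathcal{N}^C(S^{\rm op})] \cap \mathcal{N}^C(S^{\rm op})$ of Lemma~\ref{lema-corazon} gives, and what the definition $\mathcal{H}(\mathfrak{N}) = {}^\perp\mathfrak{N} \cap \mathfrak{N}$ gives after transporting through Lemma~\ref{lem:HolmWhite_for_MN}~(2), is the reversed vanishing $\Ext^{\geq 1}_{S^{\rm op}}(\mathcal{H}_C(\mathfrak{N}), \mathcal{N}^C(S^{\rm op})) = 0$, i.e.\ $\mathcal{N}^C(S^{\rm op}) \subseteq [\mathcal{H}_C(\mathfrak{N})]^\perp$. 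This reversed containment is also what is actually invoked later (e.g.\ in the proof of Proposition~\ref{char_coCMinj}: ``by Lemma~\ref{orthogonal_core}~(1), we have that $N \in [\mathcal{H}_C(\mathfrak{N})]^\perp$''). So the displayed statement of item~(1) almost certainly carries a typo, which your argument silently corrects --- you assert that the $R^{\rm op}$-level vanishing ``holds by the definition of $\mathcal{H}(\mathfrak{N})$'' when only the reversed version does --- but since self-orthogonality of $\mathcal{H}_C(\mathfrak{N})$ follows from either direction (thanks to $\mathcal{H}_C(\mathfrak{N}) \subseteq \mathcal{N}^C(S^{\rm op})$), the final conclusion you need is unaffected. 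Just flag the direction explicitly so the reader isn't misled into thinking ${}^\perp\mathfrak{N} \cap \mathfrak{N} \subseteq \mathfrak{N}^\perp$, which is not generally true.

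Part~(3) matches the paper's intended argument exactly: ${}^\perp[\mathcal{N}^C(S^{\rm op})]$ is closed under extensions by the long exact sequence in $\Ext$, $\mathcal{N}^C(S^{\rm op})$ is closed under extensions by Proposition~\ref{Prop0}~(1) under the hypothesis on $\mathcal{N}$, and the intersection is $\mathcal{H}_C(\mathfrak{N})$ by Lemma~\ref{lema-corazon}.
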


\begin{proof}
Parts (1) and (3) follow by Proposition \ref{Prop0} (1) and Lemma \ref{lema-corazon}. Part (2), on the other hand, follows by Lemma \ref{lem:HolmWhite_for_MN}.
\end{proof}

If we set $\omega = \mathcal{H}_C(\mathfrak{N})$, then following the notation in \cite{BR07}, we let $\mathrm{coCM}(\mathcal{H}_C(\mathfrak{N}))$ denote the class of dual Cohen-Macaulay $S^{\rm op}$-modules relative to $\mathcal{H}_C(\mathfrak{N})$. Let us present the Cohen-Macaulay analogue of Proposition \ref{duality_Ginj}.

\begin{proposition}[Foxby equivalences for dual Cohen-Macaulay modules]\label{duality_coCM}
If ${}_S C_R$ is faithfully semidualizing, then the following assertions hold for every $N \in \B_{C}(R^{\rm op})$ and $M \in \A_C(S^{\rm op})$:
\begin{enumerate}
\item $M \in \mathrm{coCM}(\mathcal{H}_C(\mathfrak{N}))$ if, and only if $M \otimes_S C \in \mathrm{coCM}(\mathcal{H}(\mathfrak{N}))$.

\item $N \in \mathrm{coCM}(\mathcal{H}(\mathfrak{N}))$ if, and only if, $\Hom_{R^{\op}}(C,N) \in \mathrm{coCM}(\mathcal{H}_C(\mathfrak{N}))$.
\end{enumerate} 
\end{proposition}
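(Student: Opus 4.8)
The plan is to prove (1) by transporting dual Cohen-Macaulay resolutions back and forth along the second Foxby equivalence $- \otimes_S C \colon \A_C(S^{\rm op}) \to \B_C(R^{\rm op})$ (with quasi-inverse $\Hom_{R^{\rm op}}(C,\sim)$), checking that each of the three ingredients in the definition of a dual Cohen-Macaulay object --- membership in $\omega^\perp$, the existence of an $\omega$-resolution, and the $\omega^\perp$-orthogonality of all of its syzygies --- is preserved; part (2) will then follow formally.

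First I would record the inclusions $\mathcal{H}_C(\mathfrak{N}) \subseteq \mathcal{N}^C(S^{\rm op}) \subseteq \A_C(S^{\rm op})$ (via Lemma \ref{Lema1}(2)) and $\mathcal{H}(\mathfrak{N}) \subseteq \mathfrak{N} \subseteq \B_C(R^{\rm op})$, together with the fact that, as ${}_S C_R$ is faithfully semidualizing, both $\A_C(S^{\rm op})$ and $\B_C(R^{\rm op})$ are thick. For the ``only if'' part of (1), start from an $\mathcal{H}_C(\mathfrak{N})$-resolution
\[
\cdots \to W_1 \xrightarrow{\partial_1} W_0 \xrightarrow{\partial_0} M \to 0
\]
with $M$ and all syzygies $Z_k := \Ker(\partial_k)$ lying in $[\mathcal{H}_C(\mathfrak{N})]^\perp$. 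Since $M$ and each $W_k$ belong to $\A_C(S^{\rm op})$, thickness forces every $Z_k$ into $\A_C(S^{\rm op})$ as well; hence $\mathsf{(a1)}$ gives $\Tor^S_{\geq 1}(Z_k, C) = 0$, and applying $- \otimes_S C$ to the short exact sequences into which the resolution decomposes yields an exact complex. Its terms are $W_k \otimes_S C \simeq \Hom_{R^{\rm op}}(C, N_k) \otimes_S C \simeq N_k \in \mathcal{H}(\mathfrak{N})$ (writing $W_k \simeq \Hom_{R^{\rm op}}(C, N_k)$ with $N_k \in \mathcal{H}(\mathfrak{N})$), its syzygies are the $Z_k \otimes_S C$, and Lemma \ref{orthogonal_core}(2) applied to $M, Z_k \in [\mathcal{H}_C(\mathfrak{N})]^\perp \cap \A_C(S^{\rm op})$ places $M \otimes_S C$ and each $Z_k \otimes_S C$ in $[\mathcal{H}(\mathfrak{N})]^\perp$. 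Thus $M \otimes_S C \in \mathrm{coCM}(\mathcal{H}(\mathfrak{N}))$.

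The ``if'' part of (1) is dual: from an $\mathcal{H}(\mathfrak{N})$-resolution of $M \otimes_S C$, thickness of $\B_C(R^{\rm op})$ places all its syzygies in $\B_C(R^{\rm op})$, $\mathsf{(b1)}$ ensures that $\Hom_{R^{\rm op}}(C,-)$ leaves the complex exact, $\mathsf{(a3)}$ identifies $\Hom_{R^{\rm op}}(C, M \otimes_S C) \simeq M$, the new terms lie in $\mathcal{H}_C(\mathfrak{N})$, and the converse half of Lemma \ref{orthogonal_core}(2) returns the $\omega^\perp$-conditions on $M$ and on the syzygies --- giving $M \in \mathrm{coCM}(\mathcal{H}_C(\mathfrak{N}))$. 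For part (2), recall that any $N \in \B_C(R^{\rm op})$ satisfies $N \simeq \Hom_{R^{\rm op}}(C, N) \otimes_S C$ with $\Hom_{R^{\rm op}}(C, N) \in \A_C(S^{\rm op})$ by the second Foxby equivalence, so applying (1) with $M = \Hom_{R^{\rm op}}(C, N)$ gives exactly the asserted equivalence.

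I expect the one point requiring genuine care to be the preservation of \emph{exactness} of the resolutions under $- \otimes_S C$ and $\Hom_{R^{\rm op}}(C,-)$: this is precisely where the syzygies must be known to lie in the Auslander, respectively Bass, class (hence the faithfully semidualizing hypothesis, via thickness) so that the vanishing conditions $\mathsf{(a1)}$ and $\mathsf{(b1)}$ can be invoked at every stage, with Lemma \ref{orthogonal_core}(2) doing all the remaining $\omega^\perp$-bookkeeping. The rest is a routine transport of natural isomorphisms along the Foxby equivalence.
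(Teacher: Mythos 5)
Your proposal is correct and follows essentially the same route as the paper: the paper's proof is the terse remark ``Follows from the arguments shown in the proof of Proposition~\ref{duality_Ginj}, along with part~(2) of the previous lemma,'' and what you have written is precisely the unpacking of those arguments --- thickness of the Auslander and Bass classes to place all syzygies of the (co)resolution there, conditions $\mathsf{(a1)}$ and $\mathsf{(b1)}$ to preserve exactness under $-\otimes_S C$ and $\Hom_{R^{\rm op}}(C,\sim)$, and Lemma~\ref{orthogonal_core}(2) to transport the $\omega^\perp$-conditions.
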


\begin{proof}
Follows from the arguments shown in the proof of Proposition \ref{duality_Ginj}, along with part (2) of the previous lemma. 
\end{proof}

The following result is a generalization of \cite[Prop. 3.7 \& Thm. 3.9]{ChengZhao24}.

\begin{proposition}[characterization of $\mathrm{coCM}(\mathcal{H}_C(\mathfrak{N}))$]\label{char_coCMinj}
Consider the following assertions for $M \in \Modu (S^{\rm op})$:
\begin{enumerate}[(a)]
\item $M \in \mathrm{coCM}(\mathcal{H}_C(\mathfrak{N}))$.

\item $M \simeq \Ker(\partial^0)$ in some exact complex
\[
\Gamma \colon \cdots \to N_1 \to N_0 \xrightarrow{\partial_0} I^0 \xrightarrow{\partial^0} I^1 \to \cdots
\]
where $N_k \in \mathcal{N}^C(S^{\rm op})$ and $I^k \in \Inj(S^{\rm op})$ for every $k \in \mathbb{Z}_{\geq 0}$, such that $\Hom_{S^{\rm op}}(H,\Gamma)$ is exact for every $H \in \mathcal{H}_C(\mathfrak{N})$. 

\item There exists a short exact sequence 
\[
0 \to L \to N \to M \to 0
\] 
with $N \in \mathcal{N}^C(S^{\rm op})$ and $L \in \mathrm{coCM}(\mathcal{H}_C(\mathfrak{N}))$.
\end{enumerate}
Then, the implications (a) $\Rightarrow$ (c) $\Rightarrow$ (b) always hold. If in addition $(\mathcal{M,N})$ is bicomplete and $({}^\perp[\B_C(R^{\rm op})],\B_C(R^{\rm op}))$ is a cotorsion pair cogenerated by a set, then (b) $\Rightarrow$ (a) holds and the three assertions are equivalent. In particular, $M \in \mathrm{coCM}(\mathcal{H}_C(\mathfrak{N}))$ if, and only if, $M \in [\mathcal{H}_C(\mathfrak{N})]^\perp$ and there exists an $\mathcal{N}^C(S^{\rm op})$-resolution 
\[
\cdots \to N_1 \xrightarrow{\partial_1} N_0 \xrightarrow{\partial_0} M \to 0
\]
such that $\Ker(\partial_k) \in [\mathcal{H}_C(\mathfrak{N})]^\perp$ for every $k \in \mathbb{Z}_{\geq 0}$. That is, 
\[
\mathrm{coCM}(\mathcal{H}_C(\mathfrak{N})) = \mathcal{WGI}_{(\mathcal{H}_C(\mathfrak{N}),\mathcal{N}^C(S^{\rm op}))}
\] 
according to the notation of \cite{BMS}. 
\end{proposition}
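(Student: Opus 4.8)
The plan is to establish the cycle of implications (a) $\Rightarrow$ (c) $\Rightarrow$ (b), then (b) $\Rightarrow$ (a) under the extra hypotheses, and finally identify $\mathrm{coCM}(\mathcal{H}_C(\mathfrak{N}))$ with $\mathcal{WGI}_{(\mathcal{H}_C(\mathfrak{N}),\mathcal{N}^C(S^{\rm op}))}$. Throughout, I set $\omega := \mathcal{H}_C(\mathfrak{N})$, which is self-orthogonal by Lemma \ref{orthogonal_core} (1), and I keep in mind the orthogonality $\Ext^{\geq 1}_{S^{\rm op}}(\mathcal{N}^C(S^{\rm op}),\omega) = 0$ from the same lemma; this will be what lets me paste $\mathcal{N}^C(S^{\rm op})$-resolutions together while controlling $\omega^\perp$-membership of the cycles.

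For (a) $\Rightarrow$ (c): if $M \in \mathrm{coCM}(\omega)$, take the first step $W_0 \xrightarrow{\partial_0} M \to 0$ of an $\omega$-resolution with all kernels in $\omega^\perp$; since $\omega \subseteq \mathcal{N}^C(S^{\rm op})$, setting $N := W_0$ and $L := \Ker(\partial_0)$ gives the short exact sequence in (c), and $L \in \mathrm{coCM}(\omega)$ because the truncated resolution $\cdots \to W_2 \to W_1 \to L \to 0$ still has all its kernels in $\omega^\perp$ and $L \in \omega^\perp$ by construction. For (c) $\Rightarrow$ (b): iterate (c) to the left to produce the ``$\cdots \to N_1 \to N_0 \to M$'' part with $N_k \in \mathcal{N}^C(S^{\rm op})$ and all cycles in $\omega^\perp$; for the ``$\to I^0 \to I^1 \to \cdots$'' part, take a genuine injective coresolution of $M$ and argue it is $\Hom_{S^{\rm op}}(H,-)$-exact for $H \in \omega$ automatically, because $\Inj(S^{\rm op})$-coresolutions are always $\Hom$-exact; then splice the two halves at $M$. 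The resulting complex $\Gamma$ is exact, has the prescribed components, and $\Hom_{S^{\rm op}}(H,\Gamma)$ is exact for $H \in \omega$: on the left half this uses that each cycle lies in $\omega^\perp$ together with $\Ext^{\geq 1}_{S^{\rm op}}(\mathcal{N}^C(S^{\rm op}),\omega) = 0$, and on the right half it is the injective-coresolution remark.

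The substantive direction is (b) $\Rightarrow$ (a) under the bicompleteness hypotheses, and this is where I expect the real work. The strategy is to transport the problem across the second Foxby equivalence to $R^{\rm op}$, where Proposition \ref{prop:dualityR} and the cotorsion-pair machinery of \cite{BecerrilPerez25} are available. Concretely, given a complex $\Gamma$ as in (b) with $M \simeq \Ker(\partial^0)$, one first checks that all the relevant objects lie in $\A_C(S^{\rm op})$ (using that $\mathcal{N}^C(S^{\rm op}) \subseteq \A_C(S^{\rm op})$ by Lemma \ref{Lema1} (2), that $\Inj(S^{\rm op}) \subseteq \A_C(S^{\rm op})$, and thickness of $\A_C(S^{\rm op})$ since $C$ is faithfully semidualizing), so that applying $- \otimes_S C$ yields an exact complex over $R^{\rm op}$ with components in $\mathfrak{N}$ and in $\Inj(R^{\rm op})$-image, with $\Hom_{R^{\rm op}}(\mathcal{H}(\mathfrak{N}),-)$-exactness by Lemma \ref{lem:HolmWhite_for_MN} (2) and Proposition \ref{raro}. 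Then one needs the $R^{\rm op}$-side statement ``(b) $\Rightarrow$ (a)'' for $\mathrm{coCM}(\mathcal{H}(\mathfrak{N}))$, which is exactly \cite[Prop. 3.7]{ChengZhao24} generalized; its proof rests on the completeness of the cotorsion pair $({}^\perp[\mathcal{B}_C(R^{\rm op})],\mathcal{B}_C(R^{\rm op}))$ being cogenerated by a set, so that one can build $\omega_R$-resolutions with $\omega_R^\perp$-cycles by repeatedly taking special precovers, where $\omega_R := \mathcal{H}(\mathfrak{N})$. Having shown $M \otimes_S C \in \mathrm{coCM}(\mathcal{H}(\mathfrak{N}))$, Proposition \ref{duality_coCM} (1) brings us back to $M \in \mathrm{coCM}(\omega)$. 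The main obstacle is verifying that the special precover construction can be carried out \emph{inside} $\A_C(S^{\rm op})$ (equivalently $\mathcal{B}_C(R^{\rm op})$) at every stage, so that the Foxby equivalence applies to the whole resolution and not just to one object — this is where thickness of the Bass class and the fact that $\mathfrak{M}^\perp$-approximations stay in the relevant subcategory (via \cite[Ex. 3.10 (5)]{BecerrilPerez25} and the product-closedness of $\A_C(R)$) must be used carefully.

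The final ``In particular'' paragraph is then bookkeeping: once the three conditions are equivalent, (a) unwinds to precisely the definition of $M \in [\omega]^\perp$ admitting an $\mathcal{N}^C(S^{\rm op})$-resolution with all cycles in $[\omega]^\perp$, because in an $\omega$-resolution each $W_k \in \omega \subseteq \mathcal{N}^C(S^{\rm op})$; conversely, given an $\mathcal{N}^C(S^{\rm op})$-resolution with $[\omega]^\perp$-cycles, one shows it is automatically $\Hom_{S^{\rm op}}(\omega,-)$-exact using $\Ext^{\geq 1}_{S^{\rm op}}(\mathcal{N}^C(S^{\rm op}),\omega) = 0$ and the $[\omega]^\perp$-condition on cycles, so that $M$ fits into the exact $\Gamma$ of part (b) after appending an injective coresolution. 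This matches the defining format of $\mathcal{WGI}_{(\mathcal{H}_C(\mathfrak{N}),\mathcal{N}^C(S^{\rm op}))}$ in \cite{BMS} verbatim, giving the asserted equality of classes.
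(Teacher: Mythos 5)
Your logical skeleton (a) $\Rightarrow$ (c) $\Rightarrow$ (b) $\Rightarrow$ (a), followed by the identification with $\mathcal{WGI}$, matches the paper, but two of your steps contain real errors. In (c) $\Rightarrow$ (b), the claim that the injective coresolution of $M$ is ``$\Hom_{S^{\rm op}}(H,-)$-exact automatically because $\Inj(S^{\rm op})$-coresolutions are always $\Hom$-exact'' is false: exactness of $\Hom_{S^{\rm op}}(H, 0 \to M \to I^0 \to \cdots)$ requires precisely that $M \in [\mathcal{H}_C(\mathfrak{N})]^\perp$, which does not come for free. The paper first establishes $M \in [\mathcal{H}_C(\mathfrak{N})]^\perp$ (from $L \in [\mathcal{H}_C(\mathfrak{N})]^\perp$, $N \in [\mathcal{H}_C(\mathfrak{N})]^\perp$ via Lemma \ref{orthogonal_core}(1), and the fact that $[\mathcal{H}_C(\mathfrak{N})]^\perp$ is coresolving) and only then concludes that the injective coresolution has cycles in $[\mathcal{H}_C(\mathfrak{N})]^\perp$.

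Your (b) $\Rightarrow$ (a) is a genuinely different route from the paper's, and it does not work. You propose to apply $- \otimes_S C$ to the whole complex $\Gamma$, which requires all the $I^k$ to be in $\A_C(S^{\rm op})$; but injective $S^{\rm op}$-modules lie in the Bass class $\B_C(S^{\rm op})$, not the Auslander class $\A_C(S^{\rm op})$ (which contains flats), so $\Tor^S_{\geq 1}(I^k,C)$ need not vanish and the transported right half of $\Gamma$ need not even be exact. Moreover, you then want to invoke Proposition \ref{duality_coCM}(1) to bring the conclusion back to $\Modu(S^{\rm op})$, but that proposition assumes ${}_S C_R$ is faithfully semidualizing and $M \in \A_C(S^{\rm op})$, neither of which is among the hypotheses of the present proposition. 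Finally, your reduction leans on an ``$R^{\rm op}$-side version of (b) $\Rightarrow$ (a)'' that is essentially the statement being proved (in the trivial semidualizing case), so the argument is nearly circular without an independent proof. The paper avoids all of this by working directly in $\Modu(S^{\rm op})$: writing $N_0 \simeq \Hom_{R^{\rm op}}(C,B_0)$, taking a special $\mathcal{H}(\mathfrak{N})$-precover $0 \to B_0' \to H_0 \to B_0 \to 0$ over $R^{\rm op}$ (available because bicompleteness makes $({}^\perp\mathfrak{N},\mathfrak{N})$ a complete cotorsion pair), transporting only this single short exact sequence across $\Hom_{R^{\rm op}}(C,-)$, taking a pullback in $\Modu(S^{\rm op})$, and then using the dual of \cite[Lem.\ 3.23]{BMS} to conclude that the pullback object $Q$ lies in $\mathcal{WGI}_{(\mathcal{H}_C(\mathfrak{N}),\mathcal{N}^C(S^{\rm op}))}$ so the procedure can be iterated. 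That last lemma is the decisive technical input, and it is absent from your proposal.
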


\begin{proof}
The implication (a) $\Rightarrow$ (c) is immediate. For (c) $\Rightarrow$ (b), consider a $\mathcal{H}_C(\mathfrak{N})$-resolution $\cdots \to W_1 \xrightarrow{\partial_1} W_0 \xrightarrow{\partial_0} L \to 0$ with $\text{Ker}(\partial_k) \in [\mathcal{H}_C(\mathfrak{N})]^\perp$ for every $k \in \mathbb{Z}_{\geq 0}$. Since also $L \in [\mathcal{H}_C(\mathfrak{N})]^\perp$, this resolution remains exact after applying the functor $\Hom_{S^{\rm op}}(H,\sim)$ for every $H \in \mathcal{H}_C(\mathfrak{N})$. Moreover, the short exact sequence $0 \to L \to N \to M \to 0$ remains exact as well after applying $\Hom_{S^{\rm op}}(H,\sim)$ (again, since $L \in [\mathcal{H}_C(\mathfrak{N})]^\perp$). On the other hand, by Lemma \ref{orthogonal_core} (1), we have that $N \in [\mathcal{H}_C(\mathfrak{N})]^\perp$, and since $[\mathcal{H}_C(\mathfrak{N})]^\perp$ is coresolving, we get $M \in [\mathcal{H}_C(\mathfrak{N})]^\perp$. Now consider an injective coresolution $0 \to M \to I^0 \to I^1 \to \cdots$ of $M$. Since $M \in [\mathcal{H}_C(\mathfrak{N})]^\perp$, $\Inj(S^{\rm op}) \subseteq [\mathcal{H}_C(\mathfrak{N})]^\perp$ and $[\mathcal{H}_C(\mathfrak{N})]^\perp$ is coresolving, we have that the previous coresolution has cycles in $[\mathcal{H}_C(\mathfrak{N})]^\perp$, and so it remains exact after applying $\Hom_{S^{\rm op}}(H,\sim)$ for every $H \in \mathcal{H}_C(\mathfrak{N})$. Glueing the $\mathcal{H}_C(\mathfrak{N})$-resolution of $L$ and the injective coresolution of $M$, we obtain the desired complex.     

Finally, for (b) $\Rightarrow$ (a), suppose we are given a complex as $\Gamma$. Using a standard homological algebra argument and Lemma \ref{orthogonal_core} (1), we can note that $\Gamma$ has cycles in $[\mathcal{H}_C(\mathfrak{N})]^\perp$. From this point on, we follow the arguments in \cite[Proof of Thm. 3.9 (1) $\Rightarrow$ (3)]{ChengZhao24}. Consider the short exact sequence $0 \to \Ker(\partial_0) \to N_0 \to M \to 0$, where $N_0 \simeq \Hom_{R^{\rm op}}(C,B_0)$ with $B_0 \in \mathfrak{N}$. By the comments prior to Proposition \ref{prop:dualityR}, $(\mathfrak{M,N})$ is a bicomplete duality pair. In particular, $({}^\perp\mathfrak{N},\mathfrak{N})$ is a complete cotorsion pair, and so we can consider a short exact sequence $0 \to B'_0 \to H_0 \to B_0 \to 0$ with $B'_0 \in \mathfrak{N}$ and $H_0 \in \mathcal{H}(\mathfrak{N})$. Since $B'_0 \in \mathcal{B}_C(R^{\rm op})$, the sequence 
\[
0 \to \Hom_{R^{\rm op}}(C,B'_0) \to \Hom_{R^{\rm op}}(C,H_0) \to N_0 \to 0
\]
is exact. Now taking the pullback of $\Ker(\partial_0) \to N_0 \leftarrow \Hom_{R^{\rm op}}(C,H_0)$ yields the following commutative diagram with exact rows and columns
\[
\begin{tikzpicture}[description/.style={fill=white,inner sep=2pt}] 
\matrix (m) [matrix of math nodes, row sep=2.3em, column sep=2.3em, text height=1.25ex, text depth=0.25ex] 
{ 
{} & 0 & 0 & {} & {} \\
{} & \Hom_{R^{\rm op}}(C,B'_0) & \Hom_{R^{\rm op}}(C,B'_0) & {} & {} \\
0 & Q & \Hom_{R^{\rm op}}(C,H_0) & M & 0 \\
0 & \Ker(\partial_0) & N_0 & M & 0 \\
{} & 0 & 0 & {} & {} \\
}; 
\path[->] 
(m-3-2)-- node[pos=0.5] {\footnotesize$\mbox{\bf pb}$} (m-4-3) 
(m-1-2) edge (m-2-2) (m-1-3) edge (m-2-3)
(m-2-2) edge (m-3-2) (m-2-3) edge (m-3-3)
(m-3-2) edge (m-4-2) (m-3-3) edge (m-4-3)
(m-4-2) edge (m-5-2) (m-4-3) edge (m-5-3)
(m-3-1) edge (m-3-2) (m-3-2) edge (m-3-3) (m-3-3) edge (m-3-4) (m-3-4) edge (m-3-5)
(m-4-1) edge (m-4-2) (m-4-2) edge (m-4-3) (m-4-3) edge (m-4-4) (m-4-4) edge (m-4-5)
;
\path[-,font=\scriptsize]
(m-2-2) edge [double, thick, double distance=2pt] (m-2-3)
(m-3-4) edge [double, thick, double distance=2pt] (m-4-4)
;
\end{tikzpicture} 
\]
where $\Hom_{R^{\rm op}}(C,H_0) \in \mathcal{H}_C(\mathfrak{N})$. Note that $\Ker(\partial_0) \in \mathcal{WGI}_{(\mathcal{H}_C(\mathfrak{N}),\mathcal{N}^C(S^{\rm op}))}$. Also, $\Hom_{R^{\rm op}}(C,B'_0) \in \mathcal{N}^C(S^{\rm op})$, $\mathcal{H}_C(\mathfrak{N}) \subseteq {}^{\perp}\mathcal{N}^C(S^{\rm op})$ and $\mathcal{N}^C(S^{\rm op})$ is closed under extensions by Proposition \ref{Prop0} (1). Then by \cite[Dual of Lem. 3.23]{BMS} we have that $Q \in \mathcal{WGI}_{(\mathcal{H}_C(\mathfrak{N}),\mathcal{N}^C(S^{\rm op}))}$. Thus, we can iterate the previous procedure starting from $Q$ in order to obtain a $\mathcal{H}_C(\mathfrak{N})$-resolution of $M$ with cycles in $[\mathcal{H}_C(\mathfrak{N})]^\perp$, and hence proving that $M \in \mathrm{coCM}(\mathcal{H}_C(\mathfrak{N}))$. 
\end{proof}

We know relate $\mathrm{coCM}(\mathcal{H}_C(\mathfrak{N}))$ with the $(\mathcal{H}_C(\mathfrak{N}),\mathcal{N}^C(S^{\rm op}))$-Gorenstein injective modules. First, we recall from \cite[Def. 3.6]{BMS} that a pair $(\mathcal{Z,Y})$ of classes of $S^{\rm op}$-modules is \emph{GI-admissible} if:
\begin{itemize}
\item Every $S^{\rm op}$-module can be embedded into an $S^{\rm op}$-module of $\mathcal{Y}$; 
\item $\Ext^i_{S^{\rm op}}(Z,Y) = 0$ for every $Z \in \mathcal{Z}$, $Y \in \mathcal{Y}$ and $i \in \mathbb{Z}_{> 0}$;
\item $\mathcal{Z}$ and $\mathcal{Y}$ are closed under finite coproducts, and $\mathcal{Y}$ is closed under extensions; and
\item for every $Y \in \mathcal{Y}$ there exists a short exact sequence $0 \to Y' \to V \to Y \to 0$ with $Y' \in \mathcal{Y}$ and $V \in \mathcal{Y} \cap \mathcal{Z}$.
\end{itemize}

\begin{example}
If $({}^\perp[\B_C(R^{\rm op})],\B_C(R^{\rm op}))$ is a cotorsion pair cogenerated by a set, and $(\mathcal{M,N})$ is a bicomplete duality pair with $S \in \mathcal{M}^C(S)$, then the pair $(\mathcal{H}_C(\mathfrak{N}),\mathcal{N}^C(S^{\rm op}))$ is GI-admissible. Indeed, note first that $(\mathcal{M}^C(S),\mathcal{N}^C(S^{\rm op}))$ is a complete (in particular, perfect) duality pair by Proposition \ref{Prop0} (6), and so $\Inj(S^{\rm op}) \subseteq \mathcal{N}^C(S^{\rm op})$. It follows that every $S^{\rm op}$-module can be embedded into a(n injective) module in $\mathcal{N}^C(S^{\rm op})$. Since $\mathcal{H}_C(\mathfrak{N}) = \mathcal{N}^C(S^{\rm op}) \cap {}^\perp\mathcal{N}^C(S^{\rm op})$ by Lemma \ref{lema-corazon}, the second condition of the definition of GI-admissible pair is clear. Moreover, since $\mathcal{N}$ and $\mathcal{B}_C(R^{\rm op})$ are closed under finite direct sums and $\Hom_{R^{\rm op}}(C,\sim)$ commutes with finite direct sums, we have that $\mathcal{N}^C(S^{\rm op})$ has the same closure property, and clearly so does $\mathcal{H}_C(\mathfrak{N})$. Furthermore, since $\mathcal{M}^C(S)$ is closed under extensions and the duality pair $(\mathcal{M}^C(S),\mathcal{N}^C(S^{\rm op}))$ is symmetric, then $\mathcal{N}^C(S^{\rm op})$ is closed under extensions. Finally, by the same arguments used in the proof of (b) $\Rightarrow$ (a) of the previous proposition, every module in $\mathcal{N}^C(S^{\rm op})$ is the epimorphic image of a module in $\mathcal{H}_C(\mathfrak{N})$ with kernel in $\mathcal{N}^C(S^{\rm op})$.
\end{example}

\begin{proposition}\label{coCM_meets_GI}
If $({}^\perp[\B_C(R^{\rm op})],\B_C(R^{\rm op}))$ is a cotorsion pair cogenerated by a set, and $(\mathcal{M,N})$ is a bicomplete duality pair with $S \in \mathcal{M}^C(S)$, then 
\[
\mathrm{coCM}(\mathcal{H}_C(\mathfrak{N})) = \mathcal{GI}_{(\mathcal{H}_C(\mathfrak{N}),\mathcal{N}^C(S^{\rm op}))}.
\]
\end{proposition}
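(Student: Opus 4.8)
The plan is to identify both sides with the class of weak Gorenstein injective modules of \cite{BMS} and then to compare that class with $\mathcal{GI}$. Under the present hypotheses, Proposition \ref{char_coCMinj} gives $\mathrm{coCM}(\mathcal{H}_C(\mathfrak{N})) = \mathcal{WGI}_{(\mathcal{H}_C(\mathfrak{N}),\mathcal{N}^C(S^{\rm op}))}$, so it suffices to show $\mathcal{WGI}_{(\mathcal{H}_C(\mathfrak{N}),\mathcal{N}^C(S^{\rm op}))} = \mathcal{GI}_{(\mathcal{H}_C(\mathfrak{N}),\mathcal{N}^C(S^{\rm op}))}$. Two standing facts would be used repeatedly: the class $[\mathcal{H}_C(\mathfrak{N})]^\perp$ is coresolving (being a total right $\Ext$-orthogonal complement, it is closed under extensions and cokernels of monomorphisms and contains $\Inj(S^{\rm op})$), and $\mathcal{N}^C(S^{\rm op}) \subseteq [\mathcal{H}_C(\mathfrak{N})]^\perp$, which is immediate from Lemma \ref{lema-corazon} since $\mathcal{H}_C(\mathfrak{N}) = {}^\perp[\mathcal{N}^C(S^{\rm op})] \cap \mathcal{N}^C(S^{\rm op})$.

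For the inclusion $\mathcal{GI}_{(\mathcal{H}_C(\mathfrak{N}),\mathcal{N}^C(S^{\rm op}))} \subseteq \mathcal{WGI}_{(\mathcal{H}_C(\mathfrak{N}),\mathcal{N}^C(S^{\rm op}))}$ I would take a complete $\mathcal{N}^C(S^{\rm op})$-resolution $Y_\bullet$ with $M \simeq \Ker(\partial^0)$ and $\Hom_{S^{\rm op}}(Z,Y_\bullet)$ exact for every $Z \in \mathcal{H}_C(\mathfrak{N})$, break it into short exact sequences $0 \to K_{i+1} \to Y_i \to K_i \to 0$ at its syzygies, and dimension-shift: since $\Hom_{S^{\rm op}}(Z,Y_\bullet)$ is exact and $\Ext^{\geq 1}_{S^{\rm op}}(Z,Y_i) = 0$, every syzygy $K_i$ — in particular $M$ — lies in $[\mathcal{H}_C(\mathfrak{N})]^\perp$, and the left part of $Y_\bullet$ is then the $\mathcal{N}^C(S^{\rm op})$-resolution of $M$ with syzygies in $[\mathcal{H}_C(\mathfrak{N})]^\perp$ required by Proposition \ref{char_coCMinj}. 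For the converse, let $M \in \mathcal{WGI}_{(\mathcal{H}_C(\mathfrak{N}),\mathcal{N}^C(S^{\rm op}))}$, so $M \in [\mathcal{H}_C(\mathfrak{N})]^\perp$ and there is an $\mathcal{N}^C(S^{\rm op})$-resolution of $M$ whose syzygies lie in $[\mathcal{H}_C(\mathfrak{N})]^\perp$. Using the GI-admissibility of $(\mathcal{H}_C(\mathfrak{N}),\mathcal{N}^C(S^{\rm op}))$ from the preceding Example, embed $M$ into some $N^0 \in \mathcal{N}^C(S^{\rm op})$; as $M, N^0 \in [\mathcal{H}_C(\mathfrak{N})]^\perp$ and this class is coresolving, the cokernel stays inside it, and iterating produces a right $\mathcal{N}^C(S^{\rm op})$-coresolution $0 \to M \to N^0 \to N^1 \to \cdots$ with all syzygies in $[\mathcal{H}_C(\mathfrak{N})]^\perp$. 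Splicing it with the given resolution at $M$ yields an exact complex $Y_\bullet$ of $\mathcal{N}^C(S^{\rm op})$-modules with $M \simeq \Ker(\partial^0)$; applying $\Hom_{S^{\rm op}}(Z,-)$ for $Z \in \mathcal{H}_C(\mathfrak{N})$ to the short exact sequences of $Y_\bullet$ keeps them exact because every syzygy lies in $[\mathcal{H}_C(\mathfrak{N})]^\perp$, so each connecting $\Ext^1_{S^{\rm op}}(Z,-)$ vanishes; hence $\Hom_{S^{\rm op}}(Z,Y_\bullet)$ is exact and $M \in \mathcal{GI}_{(\mathcal{H}_C(\mathfrak{N}),\mathcal{N}^C(S^{\rm op}))}$. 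Alternatively, once GI-admissibility is in hand, this last equality is precisely the general comparison between $\mathcal{WGI}$ and $\mathcal{GI}$ recorded in \cite{BMS}.

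The only genuinely delicate point — and it is bookkeeping rather than substance — is checking that the GI-admissibility of the previous Example really supplies what the coresolution half needs: that every $S^{\rm op}$-module embeds into a module of $\mathcal{N}^C(S^{\rm op})$ (which rests on $(\mathcal{M}^C(S),\mathcal{N}^C(S^{\rm op}))$ being perfect, hence $\Inj(S^{\rm op}) \subseteq \mathcal{N}^C(S^{\rm op})$, via Proposition \ref{Prop0} (6)) and that $[\mathcal{H}_C(\mathfrak{N})]^\perp$ is coresolving so that the successive cokernels remain inside it. After those are verified, the splicing of the resolution with the coresolution and the vanishing of the connecting homomorphisms are routine.
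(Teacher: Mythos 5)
Your argument is correct and follows essentially the same route as the paper: identify $\mathrm{coCM}(\mathcal{H}_C(\mathfrak{N}))$ with $\mathcal{WGI}_{(\mathcal{H}_C(\mathfrak{N}),\mathcal{N}^C(S^{\rm op}))}$ via Proposition \ref{char_coCMinj}, then conclude $\mathcal{WGI}=\mathcal{GI}$ from the GI-admissibility of $(\mathcal{H}_C(\mathfrak{N}),\mathcal{N}^C(S^{\rm op}))$ established in the preceding Example. The only difference is that you spell out the dimension-shifting and splicing argument for the second step, whereas the paper simply invokes \cite[Thm.~3.32]{BMS} — a citation you also note as an alternative.
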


\begin{proof}
On the one hand, $\mathrm{coCM}(\mathcal{H}_C(\mathfrak{N})) = \mathcal{WGI}_{(\mathcal{H}_C(\mathfrak{N}),\mathcal{N}^C(S^{\rm op}))}$ by Proposition \ref{char_coCMinj}. On the other hand, $\mathcal{WGI}_{(\mathcal{H}_C(\mathfrak{N}),\mathcal{N}^C(S^{\rm op}))} = \mathcal{GI}_{(\mathcal{H}_C(\mathfrak{N}),\mathcal{N}^C(S^{\rm op}))}$ follows by \cite[Thm. 3.32]{BMS}, since the pair $(\mathcal{H}_C(\mathfrak{N}),\mathcal{N}^C(S^{\rm op}))$ is GI-admissible by the previous example.
\end{proof}

We know propose and study the Gorenstein flat counterpart of $\mathrm{coCM}(\mathcal{H}_C(\mathfrak{N}))$. In what follows, we shall consider the following notations:
\begin{align*}
\mathcal{H}(\mathfrak{M}) & := \mathfrak{M} \cap \mathfrak{M}^\perp, \\ 
\mathcal{H}^C(\mathfrak{M}) & := \{ M \in \Modu (S) \ \text{ : } \ M \simeq C \otimes_R A \text{ for some } A \in \mathcal{H}(\mathfrak{M}) \}.
\end{align*}

\begin{definition}
We say that an $S$-module $G$ is \textbf{weakly $\bm{(\mathcal{H}^C(\mathfrak{M}),\mathcal{H}_C(\mathfrak{N}))}$-Gorenstein flat} if there exists a $\mathcal{H}^C(\mathfrak{M})$-coresolution of $G$, say 
\[
0 \to G \to H^0 \xrightarrow{\partial^0} H^1 \xrightarrow{\partial^1} \cdots 
\]
such that $\Ker(\partial^k) \in [\mathcal{H}_C(\mathfrak{N})]^\top$ for every $k \in \mathbb{Z}_{\geq 0}$. 
\textbf{Weakly $\bm{(\mathcal{M}^C(S),\mathcal{H}_C(\mathfrak{N}))}$-Gorenstein flat} $S$-modules are defined similarly, by considering $\mathcal{M}^C(S)$-coresolutions of $G$ with cycles in $[\mathcal{H}_C(\mathfrak{N})]^\top$. 
\end{definition}

Let $\mathcal{WGF}_{(\mathcal{H}^C(\mathfrak{M}),\mathcal{H}_C(\mathfrak{N}))}$ and $\mathcal{WGF}_{(\mathcal{M}^C(S),\mathcal{H}_C(\mathfrak{N}))}$ denote the classes of weakly $(\mathcal{H}^C(\mathfrak{M}),\mathcal{H}_C(\mathfrak{N}))$-Gorenstein flat and weakly $(\mathcal{M}^C(S),\mathcal{H}_C(\mathfrak{N}))$-Gorenstein flat $S$-modules. Before proving characterizations and properties for these classes, let us summarize the properties of $\mathcal{H}^C(\mathfrak{M})$ is the following result and provide an additional property for $\mathcal{H}_C(\mathfrak{N})$.

\begin{lemma} \label{raro2}
The following assertions hold:
\begin{enumerate}
\item The following are equivalent for every $M \in \Modu (S)$:
\begin{enumerate}[(a)]
\item $M \in \mathcal{M}^C(S) \cap [\mathcal{M}^C(S)]^\perp$.
\item $M \in \mathcal{H}^C(\mathfrak{M})$.
\item $\Hom_S(C,M) \in \mathcal{H}(\mathfrak{M})$.
\end{enumerate}

\item There is an equivalence of categories 
\[
\begin{tikzpicture}[description/.style={fill=white,inner sep=2pt}] 
\matrix (m) [ampersand replacement=\&, matrix of math nodes, row sep=5em, column sep=6em, text height=1.25ex, text depth=0.25ex] 
{ 
\mathcal{H}(\mathfrak{M}) \& \mathcal{H}^C(\mathfrak{M}) \\
}; 
\path[->] 
(m-1-1) edge [bend left = 20] node[above] {\footnotesize$C \otimes_R \sim$} node[below] {\footnotesize$\sim$} (m-1-2)
(m-1-2) edge [bend left = 20] node[above] {\footnotesize$\sim$} node[below] {\footnotesize$\Hom_{S}(C,\sim)$} (m-1-1)
;
\end{tikzpicture} 
\]

\item There is an equivalence of categories 
\[
\begin{tikzpicture}[description/.style={fill=white,inner sep=2pt}] 
\matrix (m) [ampersand replacement=\&, matrix of math nodes, row sep=5em, column sep=4.5em, text height=1.25ex, text depth=0.25ex] 
{ 
[\mathcal{H}(\mathfrak{M})]^\perp \cap \A_C(R) \& \B_C(S) \cap [\mathcal{H}^C(\mathfrak{M})]^\perp \\
}; 
\path[->] 
(m-1-1) edge [bend left = 15] node[above] {\footnotesize$C \otimes_R \sim$} node[below] {\footnotesize$\sim$} (m-1-2)
(m-1-2) edge [bend left = 15] node[above] {\footnotesize$\sim$} node[below] {\footnotesize$\Hom_{S}(C,\sim)$} (m-1-1)
;
\end{tikzpicture} 
\]

\item There is an equivalence of categories
\[
\begin{tikzpicture}[description/.style={fill=white,inner sep=2pt}] 
\matrix (m) [ampersand replacement=\&, matrix of math nodes, row sep=3em, column sep=6em, text height=1.25ex, text depth=0.25ex] 
{ 
[\mathcal{H}(\mathfrak{N})]^\top \cap \mathcal{A}_C(R) \& \mathcal{B}_C(S) \cap [\mathcal{H}_C(\mathfrak{N})]^\top \\
}; 
\path[->] 
(m-1-1) edge [bend left = 15] node[above] {\footnotesize$C \otimes_R \sim$} node[below] {\footnotesize$\sim$} (m-1-2)
(m-1-2) edge [bend left = 15] node[above] {\footnotesize$\sim$} node[below] {\footnotesize$\Hom_S(C,\sim)$} (m-1-1)
;
\end{tikzpicture} 
\]
\end{enumerate}
\end{lemma}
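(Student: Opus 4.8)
The plan is to read all four parts as the $\mathfrak M$-side mirrors of statements already established on the $\mathfrak N$-side: part (1) is the analogue of Proposition \ref{raro}(2), part (2) is the analogue of the Corollary on Foxby equivalences for cores immediately following it, and parts (3)--(4) are the $\Ext$- and $\Tor$-versions of Lemma \ref{orthogonal_core}(2). So the common engine will be Lemma \ref{lem:HolmWhite_for_MN} (equivalently \cite[Thm. 6.4]{HolmWhite07}) together with the first Foxby equivalence $C\otimes_R\sim$, $\Hom_S(C,\sim)$ between $\A_C(R)$ and $\B_C(S)$ of \cite[Prop. 4.1]{HolmWhite07}, restricted as in Proposition \ref{Equivalencia1}.

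For part (1) I would argue as in Proposition \ref{raro}(2). For (a) $\Rightarrow$ (b): write $M\simeq C\otimes_R A$ with $A:=\Hom_S(C,M)\in\mathfrak M$ by Lemma \ref{Lema1}(1), and test $\Ext^{\geq 1}_S(C\otimes_R A',M)=0$ against each $A'\in\mathfrak M$; the natural isomorphism $\Ext^{\geq 1}_S(C\otimes_R A',C\otimes_R A)\simeq\Ext^{\geq 1}_R(A',A)$ of Lemma \ref{lem:HolmWhite_for_MN}(1) then yields $A\in\mathfrak M^\perp$, i.e.\ $A\in\mathcal{H}(\mathfrak M)$ and $M\in\mathcal{H}^C(\mathfrak M)$. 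The implication (b) $\Rightarrow$ (a) runs the same computation in reverse, re-entering $\mathcal M^C(S)$ via Lemma \ref{Lema1}(1) and re-using Lemma \ref{lem:HolmWhite_for_MN}(1). For (b) $\Leftrightarrow$ (c): if $M\simeq C\otimes_R A$ with $A\in\mathcal{H}(\mathfrak M)\subseteq\A_C(R)$ then $\Hom_S(C,M)\simeq A\in\mathcal{H}(\mathfrak M)$; conversely, from $\Hom_S(C,M)\in\mathcal{H}(\mathfrak M)\subseteq\A_C(R)$ one recovers $M\simeq C\otimes_R\Hom_S(C,M)\in\B_C(S)$ (the only non-formal point, handled as in Proposition \ref{raro}(2)), so $M\in\mathcal{H}^C(\mathfrak M)$ by definition.

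Part (2) is then immediate: $\mathcal{H}^C(\mathfrak M)$ is by construction the essential image of $\mathcal{H}(\mathfrak M)\subseteq\A_C(R)$ under $C\otimes_R\sim$, and part (1) shows $\Hom_S(C,\sim)$ sends $\mathcal{H}^C(\mathfrak M)$ into $\mathcal{H}(\mathfrak M)$; since these two functors are mutually quasi-inverse on $\A_C(R)$ and $\B_C(S)$, so are their restrictions. For parts (3) and (4) the point is that the first Foxby equivalence transports the relevant (co)orthogonality: for $A\in\A_C(R)$ one has $A\in[\mathcal{H}(\mathfrak M)]^\perp$ if and only if $C\otimes_R A\in[\mathcal{H}^C(\mathfrak M)]^\perp$ (every object of $\mathcal{H}^C(\mathfrak M)$ is $C\otimes_R A_0$ with $A_0\in\mathcal{H}(\mathfrak M)$, and $\Ext^{\geq 1}_S(C\otimes_R A_0,C\otimes_R A)\simeq\Ext^{\geq 1}_R(A_0,A)$ by \cite[Thm. 6.4]{HolmWhite07}, which is valid for every $A\in\A_C(R)$), and likewise $A\in[\mathcal{H}(\mathfrak N)]^\top$ if and only if $C\otimes_R A\in[\mathcal{H}_C(\mathfrak N)]^\top$ via $\Tor^S_i(\Hom_{R^{\rm op}}(C,N'),C\otimes_R A)\simeq\Tor^R_i(N',A)$ for $N'\in\mathcal{H}(\mathfrak N)\subseteq\mathfrak N$ (Lemma \ref{lem:HolmWhite_for_MN}(3)). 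Combined with $C\otimes_R A\in\B_C(S)$ for $A\in\A_C(R)$, and with the fact that $\Hom_S(C,\sim)$ recovers the $C\otimes_R\sim$-preimage of any object of $\B_C(S)$, this shows the displayed arrows restrict to well-defined, mutually quasi-inverse functors, hence to equivalences.

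The step I expect to be the main obstacle is the backward half of (b) $\Leftrightarrow$ (c) in part (1): promoting $\Hom_S(C,M)\in\A_C(R)$ to $M\in\B_C(S)$. Here I would use the counit $\nu_M\colon C\otimes_R\Hom_S(C,M)\to M$: the tensor--hom adjunction triangle identity together with condition $\mathsf{(a3)}$ for $\Hom_S(C,M)$ makes $\Hom_S(C,\nu_M)$ an isomorphism; then, applying $\mathsf{(fsd7)}$ to $\Ker\nu_M$ and to $\Coker\nu_M$ (the latter controlled by $\Ext^1_S(C,C\otimes_R\Hom_S(C,M))=0$, which is $\mathsf{(a2)}$), one forces $\nu_M$ itself to be an isomorphism, after which $\mathsf{(a1)}$ and $\mathsf{(a2)}$ give $\mathsf{(b2)}$ and $\mathsf{(b1)}$ and hence $M\in\B_C(S)$. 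Everything else in the four parts is routine transport along the Foxby equivalences by Lemma \ref{lem:HolmWhite_for_MN}.
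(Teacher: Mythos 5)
Your strategy—driving everything off Lemma \ref{lem:HolmWhite_for_MN} and the restricted Foxby equivalences—is exactly what the paper does, and parts (2), (3), (4) are essentially identical to the paper's treatment. For part (1) you decompose as $(a) \Leftrightarrow (b)$ plus $(b) \Leftrightarrow (c)$ rather than the paper's cyclic $(a) \Rightarrow (b) \Rightarrow (c) \Rightarrow (a)$, but the computations are the same $\Ext$-transport along $C \otimes_R \sim$.

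The genuinely interesting thing in your write-up is the last paragraph, where you flag that $(c) \Rightarrow (b)$ requires promoting $\Hom_S(C,M) \in \A_C(R)$ to $M \in \B_C(S)$, and you propose to do this via the counit $\nu_M$ and the faithfully-semidualizing condition $\mathsf{(fsd7)}$. Your argument there is correct (the triangle identity plus $\mathsf{(a3)}$ makes $\Hom_S(C,\nu_M)$ an isomorphism, left-exactness kills $\Ker\nu_M$, $\mathsf{(a2)}$ kills $\Coker\nu_M$). But you should note two things. First, the paper states the lemma without the faithfully-semidualizing hypothesis and for all $M \in \Modu(S)$, yet its own proof of the corresponding step simply asserts ``$M \simeq C \otimes_R \Hom_S(C,M) \in \M^C(S)$ by Proposition \ref{Equivalencia1}'' — which is exactly the unjustified jump you noticed, since that isomorphism is the content of $\mathsf{(b3)}$ and presupposes $M \in \B_C(S)$. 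Second, without $\mathsf{(fsd7)}$ the implication $(c) \Rightarrow (b)$ is genuinely false: if there is $0 \neq X \in \Modu(S)$ with $\Hom_S(C,X) = 0$, then $\Hom_S(C,X) = 0 \in \mathcal{H}(\mathfrak{M})$ so $(c)$ holds for $M = X$, but $X \notin \mathcal{H}^C(\mathfrak{M})$ because any $C \otimes_R A$ with $A \in \mathcal{H}(\mathfrak{M}) \subseteq \A_C(R)$ and $\Hom_S(C, C \otimes_R A) \simeq A = 0$ forces $M = 0$. So your patch is not optional — either one assumes ${}_S C_R$ faithfully semidualizing, or one restricts $M$ to $\B_C(S)$ (which is in fact what the downstream applications do). You correctly located the one non-formal point; just be aware that the fix changes the hypotheses of the statement as printed.
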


\begin{proof} ~\\
\begin{enumerate}
\item For (a) $\Rightarrow$ (b), let $M \in \mathcal{M}^C(S) \cap [\mathcal{M}^C(S)]^\perp$. Then, in particular, $M \simeq C \otimes_R A$ for some $A \in \mathfrak{M}$. We show that $A \in \mathfrak{M}^\perp$. So let $A' \in \mathfrak{M}$, then by Lemma \ref{lem:HolmWhite_for_MN} (1) we get
\[
\Ext^i_R(A',A) \simeq \Ext^i_S(C \otimes_R A', C \otimes_R A) \simeq \Ext^i_S(C \otimes_R A', M) = 0.
\]
Thus, $A \in \mathfrak{M}^\perp$ and so $M \simeq C \otimes_R A$ with $A \in \mathcal{H}(\mathfrak{M})$. 

The implication (b) $\Rightarrow$ (c) is an immediate consequence of the first Foxby equivalence. 

Finally, for (c) $\Rightarrow$ (a), suppose $\Hom_S(C,M) \in \mathcal{H}(\mathfrak{M})$. Then, $M \simeq C \otimes_R \Hom_S(C,M) \in \mathcal{M}^C(S)$ by Proposition \ref{Equivalencia1}. Now let $C \otimes_R A' \in \mathcal{M}^C(S)$ (that is, $A' \in \mathfrak{M}$). Again, by Lemma \ref{lem:HolmWhite_for_MN} (1) we get
\[
\Ext^i_S(C \otimes_R A',M) \simeq \Ext^i_R(A',\Hom_S(C,M)) = 0.
\]
Then, $M \in [\mathcal{M}^C(S)]^\perp$ as well. 

\item It is an immediate consequence of part (1). 

\item Let $A \in [\mathcal{H}(\mathfrak{M})]^\perp \cap \mathcal{A}_C(R)$ and consider $H \in \mathcal{H}^C(\mathfrak{M})$, that is, $H \simeq C \otimes_R A'$ with $A' \in \mathfrak{M}$. By \cite[Thm. 6.4 (a)]{HolmWhite07}, we have
\[
\Ext^{i}_S(H, C\otimes_R A) \simeq \Ext^{i}_S(C\otimes_R A', C\otimes_R A) \simeq \Ext^{i}_R(A',A) = 0.
\]
Hence, $C\otimes_R A \in \B_C(S) \cap [\mathcal{H}^C(\mathfrak{M})]^\perp$. The fact that $\Hom_S(C,\sim)$ maps $S$-modules in $\B_C(S) \cap [\mathcal{H}^C(\mathfrak{M})]^\perp$ into $R$-modules in $[\mathcal{H}(\mathfrak{M})]^\perp \cap \mathcal{A}_C(R)$ can be proved similarly. 

\item Let $A \in [\mathcal{H}(\mathfrak{N})]^\top \cap \mathcal{A}_C(R)$ and consider $H \in \mathcal{H}_C(\mathfrak{N})$, that is, $H \simeq \Hom_{R^{\rm op}}(C,B)$ with $B \in \mathcal{H}(\mathfrak{N})$. By Lemma \ref{lem:HolmWhite_for_MN} (3), we have
\begin{align*}
\Tor^{i}_S(H, C\otimes_R A) & \simeq \Tor^{i}_S(\Hom_{R^{\rm op}}(C,B), C \otimes_R A) \simeq \Tor^{i}_{R}(B,A) = 0.
\end{align*}
Hence, $C\otimes_R A \in \mathcal{B}_C(S) \cap [\mathcal{H}_C(\mathfrak{N})]^\top$. The fact that $\Hom_S(C,\sim)$ maps $S$-modules in $\mathcal{B}_C(S) \cap [\mathcal{H}_C(\mathfrak{N})]^\top$ into $R$-modules in $[\mathcal{H}(\mathfrak{N})]^\top \cap \mathcal{A}_C(R)$ can be proved similarly. 
\end{enumerate}
\end{proof}

The following result shows how $S$-modules in $\mathcal{WGF}_{(\mathcal{H}^C(\mathfrak{M}),\mathcal{H}_C(\mathfrak{N}))}$ behave under the first Foxby equivalence. 

\begin{proposition}
The following assertions hold for every $N \in \mathcal{B}_C(S)$ and $M \in \mathcal{A}_C(R)$:
\begin{enumerate}
\item $N \in \mathcal{WGF}_{(\mathcal{H}^C(\mathfrak{M}),\mathcal{H}_C(\mathfrak{N}))}$ if, and only if, $\Hom_S(C,N) \in \mathcal{WGF}_{(\mathcal{H}(\mathfrak{M}),\mathcal{H}(\mathfrak{N}))}$.

\item $M \in \mathcal{WGF}_{(\mathcal{H}(\mathfrak{M}),\mathcal{H}(\mathfrak{N}))}$ if, and only if, $C \otimes_R M \in \mathcal{WGF}_{(\mathcal{H}^C(\mathfrak{M}),\mathcal{H}_C(\mathfrak{N}))}$.
\end{enumerate}
\end{proposition}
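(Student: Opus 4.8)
The plan is to run the argument of Proposition~\ref{GF-Equiv} with two‑sided totally acyclic complexes replaced by one‑sided (co)resolutions, and with the equivalence of Proposition~\ref{Equivalencia1} replaced by the refined equivalences of Lemma~\ref{raro2}(2) (for the components, $\mathcal{H}(\mathfrak{M})\leftrightarrow\mathcal{H}^C(\mathfrak{M})$) and Lemma~\ref{raro2}(4) (for the cycles, $[\mathcal{H}(\mathfrak{N})]^\top\cap\mathcal{A}_C(R)\leftrightarrow\mathcal{B}_C(S)\cap[\mathcal{H}_C(\mathfrak{N})]^\top$). It is enough to prove (1): assertion (2) then follows by applying (1) to $N:=C\otimes_R M$, since for $M\in\mathcal{A}_C(R)$ one has $C\otimes_R M\in\mathcal{B}_C(S)$ and $\Hom_S(C,C\otimes_R M)\cong M$ by the first Foxby equivalence.

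For the ``only if'' of (1), fix a $\mathcal{H}^C(\mathfrak{M})$-coresolution $0\to N\to H^0\xrightarrow{\partial^0}H^1\xrightarrow{\partial^1}\cdots$ with $Z^k:=\Ker(\partial^k)\in[\mathcal{H}_C(\mathfrak{N})]^\top$ for all $k\geq 0$ and $Z^0\cong N$. Since $\mathcal{H}^C(\mathfrak{M})\subseteq\mathcal{M}^C(S)\subseteq\mathcal{B}_C(S)$ and $\mathcal{B}_C(S)$ is coresolving, induction along the short exact sequences $0\to Z^k\to H^k\to Z^{k+1}\to 0$ shows $Z^k\in\mathcal{B}_C(S)$ for every $k$, so $\Ext^1_S(C,Z^{k+1})=0$ by $\mathsf{(b1)}$. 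Applying $\Hom_S(C,\sim)$ therefore preserves each of these sequences, and splicing them yields an exact coresolution $0\to\Hom_S(C,N)\to\Hom_S(C,H^0)\to\Hom_S(C,H^1)\to\cdots$ whose components lie in $\mathcal{H}(\mathfrak{M})$ by Lemma~\ref{raro2}(2) and whose cycles $\Hom_S(C,Z^k)$ lie in $[\mathcal{H}(\mathfrak{N})]^\top$ by Lemma~\ref{raro2}(4). Hence $\Hom_S(C,N)\in\mathcal{WGF}_{(\mathcal{H}(\mathfrak{M}),\mathcal{H}(\mathfrak{N}))}$.

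For the ``if'' of (1), fix a $\mathcal{H}(\mathfrak{M})$-coresolution $0\to\Hom_S(C,N)\to G^0\to G^1\to\cdots$ with cycles $W^k\in[\mathcal{H}(\mathfrak{N})]^\top$ and $W^0\cong\Hom_S(C,N)$. Here $\Hom_S(C,N)\in\mathcal{A}_C(R)$ (first Foxby equivalence, as $N\in\mathcal{B}_C(S)$) and $G^k\in\mathcal{H}(\mathfrak{M})\subseteq\mathcal{A}_C(R)$; since $\mathcal{A}_C(R)$ is thick --- this is exactly the point where the faithfully semidualizing hypothesis enters, just as in the proofs of Propositions~\ref{duality_Ginj} and~\ref{GF-Equiv} --- the cycles $W^k$ lie in $\mathcal{A}_C(R)$, hence in $[\mathcal{H}(\mathfrak{N})]^\top\cap\mathcal{A}_C(R)$. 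By $\mathsf{(a1)}$ one gets $\Tor^R_1(C,W^{k+1})=0$, so $C\otimes_R\sim$ preserves the short exact sequences $0\to W^k\to G^k\to W^{k+1}\to 0$; splicing them and using $C\otimes_R\Hom_S(C,N)\cong N$ (as $N\in\mathcal{B}_C(S)$) gives an exact coresolution $0\to N\to C\otimes_R G^0\to C\otimes_R G^1\to\cdots$ with components in $\mathcal{H}^C(\mathfrak{M})$ (Lemma~\ref{raro2}(2)) and cycles in $[\mathcal{H}_C(\mathfrak{N})]^\top$ (Lemma~\ref{raro2}(4)). Thus $N\in\mathcal{WGF}_{(\mathcal{H}^C(\mathfrak{M}),\mathcal{H}_C(\mathfrak{N}))}$, and part (2) follows as explained above.

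The only genuine difficulty is the cycle bookkeeping: the equivalences of Lemma~\ref{raro2} apply only to modules already known to lie in $\mathcal{A}_C(R)$ (resp.\ $\mathcal{B}_C(S)$), so one must first propagate this membership along the given coresolution before transporting it. On the Bass side this is automatic from coresolvingness of $\mathcal{B}_C(S)$; on the Auslander side it relies on thickness of $\mathcal{A}_C(R)$, which is why faithful semidualizingness is required for the half of the statement transported by $C\otimes_R\sim$. Everything else is the standard splicing/long–exact–sequence manipulation used in Propositions~\ref{duality_Ginj} and~\ref{GF-Equiv}.
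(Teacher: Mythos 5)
Your proof is correct and follows essentially the same route as the paper: splice the coresolution into short exact sequences, propagate membership in the appropriate Auslander/Bass class so that $\Hom_S(C,\sim)$ (resp.\ $C\otimes_R\sim$) preserves exactness via $\mathsf{(b1)}$ (resp.\ $\mathsf{(a1)}$), and use Lemma~\ref{raro2}(2) and (4) to transport the components and the cycle conditions. The derivation of (2) from (1) by substituting $N = C\otimes_R M$ is also how the paper does it.

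The one substantive observation you make, and it is a good catch, is that the two halves of the argument are not symmetric: the ``only if'' direction of (1) only needs $\mathcal{B}_C(S)$ to be coresolving, which is automatic, while the ``if'' direction (equivalently, the direction transported by $C\otimes_R\sim$) needs the cycles $W^k$ of the given $\mathcal{H}(\mathfrak{M})$-coresolution to lie in $\mathcal{A}_C(R)$, and the inductive step along $0\to W^k\to G^k\to W^{k+1}\to 0$ produces a cokernel of a monomorphism, so it requires $\mathcal{A}_C(R)$ to be thick --- that is, $_S C_R$ faithfully semidualizing. The membership $W^k\in[\mathcal{H}(\mathfrak{N})]^\top$ alone does not force $\Tor^R_1(C,W^{k+1})=0$, since one cannot assume $C\in\mathcal{H}(\mathfrak{N})$. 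The paper's statement does not list the faithfully semidualizing hypothesis and its proof dismisses the ``if'' direction as ``similar,'' but that similarity relies precisely on the thickness of the Auslander class; neighboring results in the paper (Propositions~\ref{duality_Ginj} and \ref{GF-Equiv}) do include this hypothesis, and you are right to make it explicit here.
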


\begin{proof}
For part (1), if $N \in \mathcal{WGF}_{(\mathcal{H}^C(\mathfrak{M}),\mathcal{H}_C(\mathfrak{N}))}$  there exists a $\mathcal{H}^C(\mathfrak{M})$-coresolution
\[
\varepsilon \colon 0 \to N \to H^0 \xrightarrow{\partial^0} H^1 \xrightarrow{\partial^1} \cdots 
\]
of $N$ such that $\Ker(\partial^k) \in [\mathcal{H}_C(\mathfrak{N})]^\top$ for every $k \in \mathbb{Z}_{\geq 0}$. Since $N \in \mathcal{B}_C(S)$, $\mathcal{H}^C(\mathfrak{M}) \subseteq \mathcal{B}_C(S)$ and $\mathcal{B}_C(S)$ is coresolving, $\varepsilon$ has cycles in $\mathcal{B}_C(S)$, and so 
\[
\Hom_S(C,\varepsilon) \colon 0 \to \Hom_S(C,N) \to \Hom_S(C,H^0) \xrightarrow{\partial^0} \Hom_S(C,H^1) \xrightarrow{\partial^1} \cdots 
\]
is exact. By Lemma \ref{raro2} (2), $\Hom_S(C,\varepsilon)$ is a $\mathcal{H}(\mathfrak{M})$-coresolution of $\Hom_S(C,N)$. Moreover, by By Lemma \ref{raro2} (4) we have that $\Hom_S(C,\varepsilon)$ has cycles in $[\mathcal{H}(\mathfrak{N})]^\top$. Then, $\Hom_S(C,N) \in \mathcal{WGF}_{(\mathcal{H}(\mathfrak{M}),\mathcal{H}(\mathfrak{N}))}$. 

The ``if'' part can be proved similarly, while (2) is a consequence of (1).
\end{proof}

Modules in $\mathcal{WGF}_{(\mathcal{H}^C(\mathfrak{M}),\mathcal{H}_C(\mathfrak{N}))}$ can be characterized as other types of relative Gorenstein flat modules, under certain conditions, as we show below in Proposition \ref{prop_coCMflat}. We shall need the following property.

\begin{lemma}\label{lemaWGFext}
The following assertions hold.
\begin{enumerate} 
\item $\Tor_{\geq 1}^S(\mathcal{H}_C(\mathfrak{N}), \mathcal{M}^C(S)) = 0$. 

\item If $0 \to K \to Q \to M \to 0$ is a short exact sequence with $K \in \mathcal{WGF}_{(\mathcal{M}^C(S),\mathcal{H}_C(\mathfrak{N}))}$ and $M \in \mathcal{M}^C(S)$, then $Q \in \mathcal{WGF}_{(\mathcal{M}^C(S),\mathcal{H}_C(\mathfrak{N}))}$ provided that $\mathcal{M}$ is closed under extensions.
\end{enumerate}
\end{lemma}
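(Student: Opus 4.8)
The plan is to prove (1) first, since it is the key input for (2). For part (1), fix $H \in \mathcal{H}_C(\mathfrak{N})$ and $M \in \mathcal{M}^C(S)$. By the definitions of these classes, $H \simeq \Hom_{R^{\rm op}}(C,B)$ for some $B \in \mathcal{H}(\mathfrak{N}) = {}^\perp\mathfrak{N} \cap \mathfrak{N} \subseteq \mathfrak{N}$, and $M \simeq C \otimes_R A$ for some $A \in \mathfrak{M}$. Lemma \ref{lem:HolmWhite_for_MN}(3) then applies and gives natural isomorphisms $\Tor^S_i(H,M) \simeq \Tor^R_i(B,A)$ for all $i$, so it is enough to show $\Tor^R_{\geq 1}(B,A) = 0$. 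I would do this by passing to Pontryagin duals: by exactness of $(-)^+$ together with \cite[Thm. 3.2.1]{EnochsJenda00}, one has $[\Tor^R_i(B,A)]^+ \simeq \Ext^i_{R^{\rm op}}(B,A^+)$ for every $i$. Since $A \in \mathfrak{M}$ and $(\mathfrak{M},\mathfrak{N})$ is a duality pair over $R$ (by \cite[Prop. 2.8]{HJ09}), we have $A^+ \in \mathfrak{N}$; and since $B \in {}^\perp\mathfrak{N}$, it follows that $\Ext^{\geq 1}_{R^{\rm op}}(B,A^+) = 0$. Hence $[\Tor^R_{\geq 1}(B,A)]^+ = 0$, and as $\mathbb{Q/Z}$ is a cogenerator of $\Modu(\mathbb{Z})$ we conclude $\Tor^R_{\geq 1}(B,A) = 0$. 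Equivalently, (1) says exactly that $\mathcal{M}^C(S) \subseteq [\mathcal{H}_C(\mathfrak{N})]^\top$.

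For part (2), start from the exact sequence $0 \to K \to Q \to M \to 0$ with $K \in \mathcal{WGF}_{(\mathcal{M}^C(S),\mathcal{H}_C(\mathfrak{N}))}$ and $M \in \mathcal{M}^C(S)$. First I would note that $Q \in [\mathcal{H}_C(\mathfrak{N})]^\top$: indeed $K \in [\mathcal{H}_C(\mathfrak{N})]^\top$ (it is the $0$-th cycle of a coresolution witnessing $K \in \mathcal{WGF}_{(\mathcal{M}^C(S),\mathcal{H}_C(\mathfrak{N}))}$), and $M \in \mathcal{M}^C(S) \subseteq [\mathcal{H}_C(\mathfrak{N})]^\top$ by part (1), so the long exact sequence in $\Tor^S(H,-)$ induced by the short exact sequence gives $\Tor^S_{\geq 1}(H,Q) = 0$ for every $H \in \mathcal{H}_C(\mathfrak{N})$. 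Next, fix an $\mathcal{M}^C(S)$-coresolution $0 \to K \to W^0 \xrightarrow{\delta^0} W^1 \xrightarrow{\delta^1} W^2 \to \cdots$ of $K$ all of whose cycles $Z^j := \Ker(\delta^j)$ lie in $[\mathcal{H}_C(\mathfrak{N})]^\top$ (so $Z^0 \simeq K$), and form the pushout of $K \hookrightarrow Q$ along $K \hookrightarrow W^0$. This produces a module $P$ fitting into two short exact sequences $0 \to W^0 \to P \to M \to 0$ and $0 \to Q \to P \to Z^1 \to 0$, the latter because $\Coker(K \hookrightarrow W^0) \simeq Z^1$. Since $\mathcal{M}$ is closed under extensions, so is $\mathcal{M}^C(S)$ by Proposition \ref{Prop0}(1), whence $P \in \mathcal{M}^C(S)$. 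Splicing $0 \to Q \to P \to Z^1 \to 0$ with the tail $0 \to Z^1 \to W^1 \xrightarrow{\delta^1} W^2 \to \cdots$ of the coresolution of $K$ produces an $\mathcal{M}^C(S)$-coresolution $0 \to Q \to P \to W^1 \to W^2 \to \cdots$ of $Q$ whose successive cycles are $Q, Z^1, Z^2, \dots$, all in $[\mathcal{H}_C(\mathfrak{N})]^\top$. Therefore $Q \in \mathcal{WGF}_{(\mathcal{M}^C(S),\mathcal{H}_C(\mathfrak{N}))}$.

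The bookkeeping in part (2) — exactness of the two pushout sequences and the identification of the cycles of the spliced complex — is routine. The step I expect to require the most care is the reduction in part (1): one has to convert the $\Tor$-vanishing asked for there into $\Ext$-vanishing over $R^{\rm op}$ so that the defining orthogonality ${}^\perp\mathfrak{N}$ of $\mathcal{H}(\mathfrak{N})$ can be brought to bear, which forces one to keep close track of which ring, and which side, each $\Ext$ and $\Tor$ is formed over.
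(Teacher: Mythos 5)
Your proposal is correct and follows essentially the same route as the paper's own proof: part (1) reduces via Lemma~\ref{lem:HolmWhite_for_MN}(3) to $\Tor^R_{\geq 1}(B,A)=0$, which is established by passing to Pontryagin duals and using $B\in{}^\perp\mathfrak{N}$ together with $A^+\in\mathfrak{N}$; part (2) observes first that $Q\in[\mathcal{H}_C(\mathfrak{N})]^\top$ and then splices a pushout (what the paper calls $M^0_Q$, which you call $P$) with the tail of the given $\mathcal{M}^C(S)$-coresolution of $K$, using Proposition~\ref{Prop0}(1) to keep the pushout inside $\mathcal{M}^C(S)$. The only small discrepancy is the citation for the $\Tor$--$\Ext$ duality isomorphism $[\Tor^R_i(B,A)]^+\simeq\Ext^i_{R^{\rm op}}(B,A^+)$: the paper uses \cite[Thm.~3.2.5]{EnochsJenda00}, whereas you cite Thm.~3.2.1, which is the underived Hom-tensor adjunction rather than its derived form.
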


\begin{proof}
For part (1), let $M \in \mathcal{M}^C(S)$ and $H \in \mathcal{H}_C(\mathfrak{N})$, that is, $M \simeq C \otimes_R A$ with $A \in \mathfrak{M}$ and $H \simeq \Hom_{R^{\rm op}}(C,B)$ with $B \in \mathcal{H}(\mathfrak{N})$. By Lemma \ref{lem:HolmWhite_for_MN} (3), we have
\[
\Tor^{i}_S(H, M) \simeq \Tor^{i}_S(\Hom_{R^{\rm op}}(C,B), C \otimes_R A) \simeq \Tor^{i}_{R^{\rm op}}(A,B).
\]
Now by \cite[Thm. 3.2.5]{EnochsJenda00}, we have $[\Tor^{i}_{R^{\rm op}}(A,B)]^+ \simeq \Ext^{i}_{R^{\rm op}}(B,A^+) = 0$, where the last equality is a consequence of $B \in {}^\perp\mathfrak{N}$ and $A \in \mathfrak{M}$ (and so $A^+ \in \mathfrak{N}$). It then follows that $\Tor^{i}_S(H, M) = 0$.

For part (2), since $K \in [\mathcal{H}_C(\mathfrak{N})]^\top$ by the definition of $\mathcal{WGF}_{(\mathcal{M}^C(S),\mathcal{H}_C(\mathfrak{N}))}$, $M \in [\mathcal{H}_C(\mathfrak{N})]^\top$ by part (1), and $[\mathcal{H}_C(\mathfrak{N})]^\top$ is closed under extensions, we have that $Q \in [\mathcal{H}_C(\mathfrak{N})]^\top$. Now consider a $\mathcal{M}^C(S)$-coresolution
\[
0 \to K \to M^0 \xrightarrow{\partial^0} M^1 \xrightarrow{\partial^1} \cdots 
\]
of $K$ such that $\Ker(\partial^k) \in [\mathcal{H}_C(\mathfrak{N})]^\top$ for every $k \in \mathbb{Z}_{\geq 0}$. Taking the pushout of $M^0 \leftarrow K \to Q$ yields the following commutative diagram with exact rows and columns:
\[
\begin{tikzpicture}[description/.style={fill=white,inner sep=2pt}] 
\matrix (m) [matrix of math nodes, row sep=2.3em, column sep=2.3em, text height=1.25ex, text depth=0.25ex] 
{ 
{} & 0 & 0 & {} & {} \\
0 & K & Q & M & 0 \\
0 & M^0 & M^0_Q & M & 0 \\
{} & M^1 &M^1 & {} & {} \\
{} & M^2 &M^2 & {} & {} \\
{} & \vdots & \vdots & {} & . \\
}; 
\path[->] 
(m-2-2)-- node[pos=0.5] {\footnotesize$\mbox{\bf po}$} (m-3-3) 
(m-1-2) edge (m-2-2) (m-1-3) edge (m-2-3)
(m-2-1) edge (m-2-2) (m-2-2) edge (m-2-3) edge (m-3-2) (m-2-3) edge (m-2-4) edge (m-3-3) (m-2-4) edge (m-2-5)
(m-3-1) edge (m-3-2) (m-3-2) edge (m-3-3) edge node[left] {\footnotesize$\partial^0$} (m-4-2) (m-3-3) edge (m-3-4) edge node[right] {\footnotesize$\partial^0_Q$} (m-4-3) (m-3-4) edge (m-3-5) 
(m-4-2) edge node[left] {\footnotesize$\partial^1$} (m-5-2) (m-4-3) edge node[right] {\footnotesize$\partial^1$} (m-5-3)
(m-5-2) edge (m-6-2) (m-5-3) edge (m-6-3)
;
\path[-,font=\scriptsize]
(m-2-4) edge [double, thick, double distance=2pt] (m-3-4)
(m-4-2) edge [double, thick, double distance=2pt] (m-4-3)
(m-5-2) edge [double, thick, double distance=2pt] (m-5-3)
;
\end{tikzpicture} 
\]
Note that $0 \to Q \to M^0_Q \xrightarrow{\partial^0_Q} M^1 \xrightarrow{\partial^1} M^2 \to \cdots$ has cycles in $\Ker(\partial^k) \in [\mathcal{H}_C(\mathfrak{N})]^\top$. Moreover, by Proposition \ref{Prop0} (1), $\mathcal{M}^C(S)$ is closed under extensions, and so $M^0_Q \in \mathcal{M}^C(S)$. Hence, the result follows.
\end{proof}

\begin{proposition}\label{prop_coCMflat}
Consider the following assertions for every $S$-module $N$:
\begin{enumerate}[(a)]
\item $N \in \mathcal{WGF}_{(\mathcal{H}^C(\mathfrak{M}),\mathcal{H}_C(\mathfrak{N}))}$. 

\item $N \simeq \Ker(\partial^0)$ in some exact complex 
\[
\Gamma \colon \cdots \to F_1 \to F_0 \to M^0 \xrightarrow{\partial^0} M^1 \to \cdots
\]
with $F_k \in \Flat(S)$ and $M^k \in \mathcal{M}^C(S)$ for every $k \in \mathbb{Z}_{\geq 0}$, such that $H \otimes_S \Gamma$ is exact for every $H \in \mathcal{H}_C(\mathfrak{N})$. 

\item There exists a short exact sequence $0 \to N \to M \to L \to 0$ with $M \in \mathcal{M}^C(S)$ and $L \in \mathcal{WGF}_{(\mathcal{M}^C(S),\mathcal{H}_C(\mathfrak{N}))}$.
\end{enumerate}
Then the implications (a) $\Rightarrow$ (c) $\Rightarrow$ (b) hold. Furthermore, (b) $\Rightarrow$ (a) holds if $(\mathcal{M,N})$ is perfect and $\mathcal{M}$ is resolving. 
\end{proposition}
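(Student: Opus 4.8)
The plan is to mimic the proof of Proposition \ref{char_coCMinj}, replacing $\Hom$ by $\otimes$, resolutions by coresolutions, and $[\;]^\perp$ by $[\;]^\top$. For \emph{(a) $\Rightarrow$ (c)} I would just take the first step of an $\mathcal{H}^C(\mathfrak{M})$-coresolution $0 \to N \to H^0 \xrightarrow{\partial^0} H^1 \to \cdots$ of $N$ with $\Ker(\partial^k)\in[\mathcal{H}_C(\mathfrak{N})]^\top$, set $L:=\Coker(N\to H^0)$, and note that $0\to N\to H^0\to L\to 0$ has $H^0\in\mathcal{H}^C(\mathfrak{M})\subseteq\mathcal{M}^C(S)$ while the shifted coresolution $0\to L\to H^1\to H^2\to\cdots$ (with terms in $\mathcal{H}^C(\mathfrak{M})\subseteq\mathcal{M}^C(S)$) witnesses $L\in\mathcal{WGF}_{(\mathcal{M}^C(S),\mathcal{H}_C(\mathfrak{N}))}$. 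For \emph{(c) $\Rightarrow$ (b)}, starting from $0\to N\to M\to L\to 0$ with $M\in\mathcal{M}^C(S)$ and $L\in\mathcal{WGF}_{(\mathcal{M}^C(S),\mathcal{H}_C(\mathfrak{N}))}$, I would first note that $M\in[\mathcal{H}_C(\mathfrak{N})]^\top$ by Lemma \ref{lemaWGFext} (1) and $L\in[\mathcal{H}_C(\mathfrak{N})]^\top$ (being a cocycle of its own coresolution), hence $N\in[\mathcal{H}_C(\mathfrak{N})]^\top$ since $[\mathcal{H}_C(\mathfrak{N})]^\top$ is closed under kernels of epimorphisms. Then I would splice a flat resolution $\cdots\to F_1\to F_0\to N\to 0$, the sequence $0\to N\to M\to L\to 0$, and an $\mathcal{M}^C(S)$-coresolution $0\to L\to M^1\to M^2\to\cdots$ of $L$ (with cocycles in $[\mathcal{H}_C(\mathfrak{N})]^\top$) into an exact complex $\Gamma\colon\cdots\to F_1\to F_0\to M\xrightarrow{\partial^0}M^1\to\cdots$ with $\partial^0$ equal to $M\twoheadrightarrow L\hookrightarrow M^1$, so that $N=\Ker(\partial^0)$. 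Every cocycle of $\Gamma$ lies in $[\mathcal{H}_C(\mathfrak{N})]^\top$ (the ones to the right of $N$ are $N$, $L$ and the cocycles of the coresolution of $L$; the ones to the left are syzygies of $N$, which have vanishing higher $\Tor$ against $\mathcal{H}_C(\mathfrak{N})$ by dimension shifting from $N\in[\mathcal{H}_C(\mathfrak{N})]^\top$), and the terms of $\Gamma$ are flat or in $\mathcal{M}^C(S)$, so $H\otimes_S\Gamma$ stays exact for every $H\in\mathcal{H}_C(\mathfrak{N})$, giving (b).

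For \emph{(b) $\Rightarrow$ (a)} under the extra hypotheses, I would first argue that $N\in\mathcal{WGF}_{(\mathcal{M}^C(S),\mathcal{H}_C(\mathfrak{N}))}$: given $\Gamma$ as in (b), its terms are flat or lie in $\mathcal{M}^C(S)\subseteq[\mathcal{H}_C(\mathfrak{N})]^\top$ (Lemma \ref{lemaWGFext} (1)) and $H\otimes_S\Gamma$ is exact, so a standard dimension-shifting argument (as in Proposition \ref{char_coCMinj}) places all cocycles of $\Gamma$, in particular $N$, in $[\mathcal{H}_C(\mathfrak{N})]^\top$, and the coresolution tail of $\Gamma$ is then the required $\mathcal{M}^C(S)$-coresolution. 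Next I would upgrade such a coresolution to an $\mathcal{H}^C(\mathfrak{M})$-coresolution one term at a time: writing $0\to N\to M^0\to C^1\to 0$ with $M^0\in\mathcal{M}^C(S)$ and $C^1\in\mathcal{WGF}_{(\mathcal{M}^C(S),\mathcal{H}_C(\mathfrak{N}))}$, the key move is to build a short exact sequence $0\to M^0\to H^0\to X^0\to 0$ with $H^0\in\mathcal{H}^C(\mathfrak{M})$ and $X^0\in\mathcal{M}^C(S)$; granting this, the pushforward of $N\hookrightarrow M^0\hookrightarrow H^0$ gives $0\to N\to H^0\to Q^0\to 0$ sitting inside $0\to C^1\to Q^0\to X^0\to 0$, and Lemma \ref{lemaWGFext} (2) (using that $\mathcal{M}$ is closed under extensions) yields $Q^0\in\mathcal{WGF}_{(\mathcal{M}^C(S),\mathcal{H}_C(\mathfrak{N}))}$, in particular $Q^0\in[\mathcal{H}_C(\mathfrak{N})]^\top$. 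Iterating on $Q^0$ manufactures an $\mathcal{H}^C(\mathfrak{M})$-coresolution of $N$ with all cocycles in $[\mathcal{H}_C(\mathfrak{N})]^\top$, i.e.\ $N\in\mathcal{WGF}_{(\mathcal{H}^C(\mathfrak{M}),\mathcal{H}_C(\mathfrak{N}))}$.

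The hard part will be the construction of the short exact sequence $0\to M^0\to H^0\to X^0\to 0$. Via Lemma \ref{raro2} and Lemma \ref{lem:HolmWhite_for_MN} (1) this reduces to showing that $(\mathfrak{M},\mathfrak{M}^\perp)$ is a complete hereditary cotorsion pair on $\Modu(R)$: then a special $\mathfrak{M}^\perp$-preenvelope $0\to A^0\to W\to X\to 0$ of $A^0$ (where $M^0\simeq C\otimes_R A^0$ with $A^0\in\mathfrak{M}$) has $W\in\mathfrak{M}\cap\mathfrak{M}^\perp=\mathcal{H}(\mathfrak{M})$ since $\mathfrak{M}$ is closed under extensions, and applying $C\otimes_R\sim$ — which is exact on $\mathcal{A}_C(R)$ — together with Lemma \ref{raro2} (2) produces the sequence over $S$. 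This completeness is exactly where the hypotheses enter: perfectness of $(\mathcal{M},\mathcal{N})$ makes $\mathcal{M}$ closed under direct limits and forces $\Flat(R)\subseteq\mathcal{M}$, $\mathcal{M}$ resolving makes $\mathfrak{M}=\mathcal{M}\cap\mathcal{A}_C(R)$ resolving, and $\mathcal{A}_C(R)$ is itself resolving, closed under direct summands and closed under direct limits (the last because $C\in\FP_\infty(S)$), so $\mathfrak{M}$ is a resolving, summand-closed, direct-limit-closed class containing a generator; such a class is deconstructible, so $(\mathfrak{M},\mathfrak{M}^\perp)$ is complete by the Eklof--Trlifaj theorem \cite{EklofTrlifaj01}. (If one additionally knows $S\in\mathcal{M}^C(S)$, completeness of $(\mathcal{M}^C(S),[\mathcal{M}^C(S)]^\perp)$ is immediate from Proposition \ref{Prop0} (4).) The rest is routine pushout diagram chasing and bookkeeping with the Foxby equivalence, as in Propositions \ref{duality_Ginj} and \ref{char_coCMinj}.
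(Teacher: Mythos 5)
Your handling of (a) $\Rightarrow$ (c) and (c) $\Rightarrow$ (b) matches the paper's argument exactly: truncating/splicing coresolutions, using Lemma \ref{lemaWGFext}~(1) to get $\mathcal{M}^C(S)\subseteq[\mathcal{H}_C(\mathfrak{N})]^\top$, and dimension-shifting on a flat resolution. Your skeleton for (b) $\Rightarrow$ (a) --- pass to an $\mathcal{M}^C(S)$-coresolution with cocycles in $[\mathcal{H}_C(\mathfrak{N})]^\top$, then upgrade it one term at a time by embedding $A^0\in\mathfrak{M}$ (where $M^0\simeq C\otimes_R A^0$) into a member of $\mathcal{H}(\mathfrak{M})$ via a special $\mathfrak{M}^\perp$-preenvelope, apply $C\otimes_R\sim$, take a pushout, and invoke Lemma \ref{lemaWGFext}~(2) --- is also the paper's construction.

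The genuine gap is in how you establish completeness of $(\mathfrak{M},\mathfrak{M}^\perp)$. You assert that ``a resolving, summand-closed, direct-limit-closed class containing a generator is deconstructible''; this is not a theorem, and the argument as written does not go through. Closure under direct limits alone (even together with being resolving) does not give a filtration property; what actually forces deconstructibility in the Holm--J\o rgensen setting is closure under \emph{pure} submodules and pure quotients, and that closure is exactly the nontrivial consequence of having a duality pair structure, not something you can read off from the closure properties you listed. The paper avoids this entirely: since $(\mathcal{M},\mathcal{N})$ is perfect and $(\mathcal{A}_C(R),\mathcal{B}_C(R^{\rm op}))$ is a complete duality pair, \cite[Prop.~2.8]{HJ09} gives that $(\mathfrak{M},\mathfrak{N})$ is a perfect duality pair, and then \cite[Thm.~3.1]{HJ09} (already quoted in the preliminaries) yields directly that $(\mathfrak{M},\mathfrak{M}^{\perp_1})$ is a perfect, hence complete, cotorsion pair; heredity follows because $\mathfrak{M}$ is resolving. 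You should replace your deconstructibility step with this citation. Your parenthetical alternative via $(\mathcal{M}^C(S),[\mathcal{M}^C(S)]^\perp)$ and Proposition \ref{Prop0}~(4) would also work, but it assumes $S\in\mathcal{M}^C(S)$, which is not among the hypotheses of the proposition, so it cannot be your main argument.
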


\begin{proof}
The implication (a) $\Rightarrow$ (c) is immediate from the definition of $\mathcal{WGF}_{(\mathcal{H}^C(\mathfrak{M}),\mathcal{H}_C(\mathfrak{N}))}$ and the containment $\mathcal{H}^C(\mathfrak{M}) \subseteq \mathcal{M}^C(S)$ (by Lemma \ref{raro2} (1)). 

For (c) $\Rightarrow$ (b), consider a $\mathcal{M}^C(S)$-coresolution $0 \to L \to M^0 \xrightarrow{\partial^0} M^1 \xrightarrow{\partial^1} \cdots$ with cycles in $[\mathcal{H}_C(\mathfrak{N})]^\top$ (in particular, $L \in [\mathcal{H}_C(\mathfrak{N})]^\top$). Glueing at $L$ this coresolution with the given short exact sequence yields a $\mathcal{M}^C(S)$-coresolution of $N$, namely $0 \to N \to M \to M^0 \xrightarrow{\partial^0} M^1 \xrightarrow{\partial^1} M^2 \to \cdots$. Let us show that $N \in [\mathcal{H}_C(\mathfrak{N})]^\top$. Note that $M \in [\mathcal{H}_C(\mathfrak{N})]^\top$ by Lemma \ref{lemaWGFext} (1). Thus, we have a short exact sequence $0 \to N \to M \to L \to 0$ with $M \in [\mathcal{H}_C(\mathfrak{N})]^\top$ and $L \in [\mathcal{H}_C(\mathfrak{N})]^\top$, and since $[\mathcal{H}_C(\mathfrak{N})]^\top$ is resolving we have that $N \in [\mathcal{H}_C(\mathfrak{N})]^\top$, and hence the previous $\mathcal{M}^C(S)$-coresolution of $N$ has cycles in $[\mathcal{H}_C(\mathfrak{N})]^\top$. On the other hand, consider a flat resolution of $N$, say $\cdots \to F_1 \to F_0 \to N \to 0$. Applying the same argument, this flat resolution has cycles in $[\mathcal{H}_C(\mathfrak{N})]^\top$, and hence we obtain a complex
\[
\Gamma \colon \cdots \to F_1 \to F_0 \to M \to M^0 \xrightarrow{\partial^0} M^1 \xrightarrow{\partial^1} M^2 \to \cdots
\]
with cycles in $[\mathcal{H}_C(\mathfrak{N})]^\top$ and where $N = \Ker(M \to M^0)$, which in turn implies that $H \otimes_S \Gamma$ is exact for every $H \in \mathcal{H}_C(\mathfrak{N})$.  

Finally, for (b) $\Rightarrow$ (a), suppose $N \simeq \Ker(\partial^0)$ in some exact complex 
\[
\Gamma \colon \cdots \to F_1 \to F_0 \to M^0 \xrightarrow{\partial^0} M^1 \to \cdots
\]
with $F_k \in \Flat(S)$ and $M^k \in \mathcal{M}^C(S)$ for every $k \in \mathbb{Z}_{\geq 0}$, such that $H \otimes_S \Gamma$ is exact for every $H \in \mathcal{H}_C(\mathfrak{N})$. Since $\Gamma$ has components in $[\mathcal{H}_C(\mathfrak{N})]^\top$, the latter acyclicity condition implies that $\Gamma$ has cycles in $[\mathcal{H}_C(\mathfrak{N})]^\top$ (again by Lemma \ref{lemaWGFext} (1)). Thus, $0 \to N \to M^0 \xrightarrow{\partial^0} M^1 \xrightarrow{\partial^1} \cdots$ is a $\mathcal{M}^C(S)$-coresolution of $N$ has cycles in $[\mathcal{H}_C(\mathfrak{N})]^\top$. Consider the short exact sequence $0 \to N \to M^0 \to K \to 0$ where $K = \Ker(\partial^1)$. Write $M^0 \simeq C \otimes_R A$ for some $A \in \mathfrak{M}$. By \cite[Prop. 2.8]{HJ09}, $(\mathfrak{M},\mathfrak{N})$ is a perfect duality pair, and so $(\mathfrak{M},\mathfrak{M}^\perp)$ is a perfect cotorsion pair (in particular complete). On the other hand, $\mathfrak{M}$ is resolving. So there is a short exact sequence $0 \to A \to H \to A' \to 0$ with $H \in \mathfrak{M} \cap \mathfrak{M}^\perp = \mathcal{H}(\mathfrak{M})$ and $A' \in \mathfrak{M}$. Since $A' \in \mathcal{A}_C(R)$, the sequence $0 \to M^0 \to C \otimes_R H \to C \otimes_R A' \to 0$ is exact.  Now taking the pushout of $K \leftarrow M^0 \to C \otimes_R H$ yields the following commutative diagram with exact rows and columns:
\[
\begin{tikzpicture}[description/.style={fill=white,inner sep=2pt}] 
\matrix (m) [matrix of math nodes, row sep=2.3em, column sep=2.3em, text height=1.25ex, text depth=0.25ex] 
{ 
{} & {} & 0 & 0 & {} \\
0 & N & M^0 & K & 0 \\
0 & N & C \otimes_R H & Q & 0 \\
{} & {} & C \otimes_R A' & C \otimes_R A' & {} \\
{} & {} & 0 & 0 & {} \\
}; 
\path[->] 
(m-2-3)-- node[pos=0.5] {\footnotesize$\mbox{\bf po}$} (m-3-4) 
(m-1-3) edge (m-2-3) (m-1-4) edge (m-2-4)
(m-2-1) edge (m-2-2) (m-2-2) edge (m-2-3) (m-2-3) edge (m-2-4) edge (m-3-3) (m-2-4) edge (m-2-5) edge (m-3-4)
(m-3-1) edge (m-3-2) (m-3-2) edge (m-3-3) (m-3-3) edge (m-3-4) edge (m-4-3) (m-3-4) edge (m-3-5) edge (m-4-4)
(m-4-3) edge (m-5-3) (m-4-4) edge (m-5-4)
;
\path[-,font=\scriptsize]
(m-2-2) edge [double, thick, double distance=2pt] (m-3-2)
(m-4-3) edge [double, thick, double distance=2pt] (m-4-4)
;
\end{tikzpicture} 
\]
Since $K \in \mathcal{WGF}_{(\mathcal{M}^C(S),\mathcal{H}_C(\mathfrak{N}))}$ and $C \otimes_R A' \in \mathcal{M}^C(S)$, then $Q \in \mathcal{WGF}_{(\mathcal{M}^C(S),\mathcal{H}_C(\mathfrak{N}))}$, and hence we get the short exact sequence 
\[
0 \to N \to C \otimes_R H \to Q \to 0
\]
with $C \otimes_R H \in \mathcal{H}^C(\mathfrak{M})$ and $Q \in [\mathcal{H}^C(\mathfrak{M})]^\perp$. Iterating the previous procedure yields a $\mathcal{H}^C(\mathfrak{M})$-coresolution of $N$ with cycles in $[\mathcal{H}^C(\mathfrak{M})]^\perp$, and therefore (a) holds. 
\end{proof}

\begin{proposition}\label{WGF_meets_GF}
If $(\mathcal{M,N})$ is a perfect duality pair with $\mathcal{M}$ resolving and $S \in \mathcal{M}^C(S)$, 
\[
\mathcal{WGF}_{(\mathcal{H}^C(\mathfrak{M}),\mathcal{H}_C(\mathfrak{N}))} = \mathcal{GF}_{(\mathcal{M}^C(S),\mathcal{H}_C(\mathfrak{N}))}.
\]
\end{proposition}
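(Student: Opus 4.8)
The plan is to derive this equality directly from Proposition~\ref{prop_coCMflat}, together with Lemma~\ref{lemaWGFext}~(1) and the fact that, under the present hypotheses, the induced pair $(\mathcal{M}^C(S),\mathcal{N}^C(S^{\rm op}))$ is again a perfect duality pair over $S$ by Proposition~\ref{Prop0}~(4) (here the assumption $S\in\mathcal{M}^C(S)$ is used). Since $(\mathcal{M,N})$ is perfect with $\mathcal{M}$ resolving, Proposition~\ref{prop_coCMflat} already tells us that for $N\in\Modu(S)$ the assertions (a), (b), (c) listed there are equivalent, so it suffices to match (b)/(c) with membership in $\mathcal{GF}_{(\mathcal{M}^C(S),\mathcal{H}_C(\mathfrak{N}))}$.

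For the inclusion $\mathcal{WGF}_{(\mathcal{H}^C(\mathfrak{M}),\mathcal{H}_C(\mathfrak{N}))}\subseteq\mathcal{GF}_{(\mathcal{M}^C(S),\mathcal{H}_C(\mathfrak{N}))}$, I would start from $N$ in the left-hand class and apply Proposition~\ref{prop_coCMflat}~((a)~$\Rightarrow$~(b)) to get an exact complex $\Gamma\colon\cdots\to F_1\to F_0\to M^0\xrightarrow{\partial^0}M^1\to\cdots$ with $F_k\in\Flat(S)$, $M^k\in\mathcal{M}^C(S)$, $N\simeq\Ker(\partial^0)$, and $H\otimes_S\Gamma$ exact for every $H\in\mathcal{H}_C(\mathfrak{N})$. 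Because $(\mathcal{M}^C(S),\mathcal{N}^C(S^{\rm op}))$ is a perfect duality pair over $S$, one has $\Flat(S)\subseteq\mathcal{M}^C(S)$ (by \cite[Prop.~2.3]{GillespieDuality} and Lazard's theorem, as recalled in Section~\ref{sec:prelims}); hence every term of $\Gamma$ lies in $\mathcal{M}^C(S)$, and $\Gamma$ itself exhibits $N$ as an element of $\mathcal{GF}_{(\mathcal{M}^C(S),\mathcal{H}_C(\mathfrak{N}))}$.

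For the reverse inclusion, I would take $N\in\mathcal{GF}_{(\mathcal{M}^C(S),\mathcal{H}_C(\mathfrak{N}))}$, so that $N\simeq\Ker(\partial^0)$ in an exact complex $X_\bullet\colon\cdots\to X_1\to X_0\xrightarrow{\partial_0}X^0\xrightarrow{\partial^0}X^1\to\cdots$ with all terms in $\mathcal{M}^C(S)$ and $H\otimes_S X_\bullet$ exact for every $H\in\mathcal{H}_C(\mathfrak{N})$. By Lemma~\ref{lemaWGFext}~(1) the terms of $X_\bullet$ lie in $[\mathcal{H}_C(\mathfrak{N})]^\top$, and a dimension-shifting argument identical to the one used in the proof of Proposition~\ref{prop_coCMflat} shows that all cycles of $X_\bullet$ lie in $[\mathcal{H}_C(\mathfrak{N})]^\top$ as well. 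Setting $L:=\Coker(N\hookrightarrow X^0)\cong\Ker(\partial^1)$, the complex $0\to L\to X^1\xrightarrow{\partial^1}X^2\to\cdots$ is a $\mathcal{M}^C(S)$-coresolution of $L$ with all cycles in $[\mathcal{H}_C(\mathfrak{N})]^\top$, whence $L\in\mathcal{WGF}_{(\mathcal{M}^C(S),\mathcal{H}_C(\mathfrak{N}))}$, while $0\to N\to X^0\to L\to 0$ is a short exact sequence with $X^0\in\mathcal{M}^C(S)$. This is exactly assertion (c) of Proposition~\ref{prop_coCMflat}, and since $(\mathcal{M,N})$ is perfect (hence $\mathcal{M}$ is closed under extensions) with $\mathcal{M}$ resolving, that proposition gives (c)~$\Rightarrow$~(a), i.e.\ $N\in\mathcal{WGF}_{(\mathcal{H}^C(\mathfrak{M}),\mathcal{H}_C(\mathfrak{N}))}$.

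The only non-formal point — and the step I expect to need the most care — is the propagation of the exactness of $H\otimes_S X_\bullet$ (for all $H\in\mathcal{H}_C(\mathfrak{N})$) to the vanishing of the higher $\Tor$-groups of each cycle of $X_\bullet$ against $\mathcal{H}_C(\mathfrak{N})$; this is the usual long-exact-sequence bookkeeping combined with Lemma~\ref{lemaWGFext}~(1), and since it is already carried out inside the proof of Proposition~\ref{prop_coCMflat}, in the final write-up it can simply be invoked from there. Everything else reduces to routine manipulation of the characterizations (b)–(c) and of the closure properties of perfect duality pairs.
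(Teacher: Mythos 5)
Your proof is correct and follows essentially the same route as the paper: the forward inclusion uses Proposition~\ref{prop_coCMflat}~(a)~$\Rightarrow$~(b) together with $\Flat(S)\subseteq\mathcal{M}^C(S)$ (via Proposition~\ref{Prop0}~(4)), and the reverse inclusion uses Lemma~\ref{lemaWGFext}~(1) to put the cycles of a Gorenstein-flat witnessing complex into $[\mathcal{H}_C(\mathfrak{N})]^\top$ and then feeds that back into Proposition~\ref{prop_coCMflat} to recover (a). Your write-up is slightly more explicit than the paper's on the reverse inclusion, spelling out the construction of the short exact sequence in condition (c) — the paper compresses that step — but the argument is the same.
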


\begin{proof}
By Proposition \ref{prop_coCMflat} (a) $\Rightarrow$ (b), for every $N \in \mathcal{WGF}_{(\mathcal{H}^C(\mathfrak{M}),\mathcal{H}_C(\mathfrak{N}))}$ one has that $N \simeq \Ker(\partial^0)$ in some exact complex 
\[
\Gamma \colon \cdots \to F_1 \to F_0 \to M^0 \xrightarrow{\partial^0} M^1 \to \cdots
\]
with $F_k \in \Flat(S)$ and $M^k \in \mathcal{M}^C(S)$ for every $k \in \mathbb{Z}_{\geq 0}$, such that $H \otimes_S \Gamma$ is exact for every $H \in \mathcal{H}_C(\mathfrak{N})$. By Proposition \ref{Prop0} (4), note that $F_k \in \Flat(S) \subseteq \mathcal{M}^C(S)$. It then follows that $N \in \mathcal{GF}_{(\mathcal{M}^C(S),\mathcal{H}_C(\mathfrak{N}))}$. By Lemma \ref{lemaWGFext} (1) the latter implies that $\Gamma$ has cycles in $[\mathcal{H}_C(\mathfrak{N})]^\top$, and so $N \in \mathcal{WGF}_{(\mathcal{M}^C(S),\mathcal{H}_C(\mathfrak{N}))}$. Hence, $N \in \mathcal{WGF}_{(\mathcal{H}^C(\mathfrak{M}),\mathcal{H}_C(\mathfrak{N}))}$ by Proposition \ref{prop_coCMflat}. 
\end{proof}

\begin{remark}
If ${}_S C_R$ is faithfully semidualizing, $(\mathcal{M,N})$ is a bicomplete duality pair with $\mathcal{M}$ resolving and closed under products, $S \in \mathcal{M}^C(S)$ and $({}^\perp[\B_C(R^{\rm op})],\B_C(R^{\rm op}))$ is a cotorsion pair cogenerated by a set, then by Propositions \ref{coCM_meets_GI} and \ref{WGF_meets_GF}, we have that
\[
\mathrm{coCM}(\mathcal{H}_C(\mathfrak{N})) = \mathcal{GI}_{(\mathcal{H}_C(\mathfrak{N}),\mathcal{N}^C(S^{\rm op}))} \ \ \text{and} \ \ \mathcal{WGF}_{(\mathcal{H}^C(\mathfrak{M}),\mathcal{H}_C(\mathfrak{N}))} = \mathcal{GF}_{(\mathcal{M}^C(S),\mathcal{H}_C(\mathfrak{N}))}.
\] 
Thus, the pair $(\mathcal{WGF}_{(\mathcal{H}^C(\mathfrak{M}),\mathcal{H}_C(\mathfrak{N}))},\mathrm{coCM}(\mathcal{H}_C(\mathfrak{N})))$ has the homological properties described in Corollary \ref{relationGFvsGI}. 
\end{remark}

%%%%%%%%%%%%%%%%%%%%%%%%%%%%%%%%%%%%
%%%%%%%%%%%%%%%%%%%%%%%%%%%%%%%%%%%%
%%%%%%%%%%%%%%%%%%%%%%%%%%%%%%%%%%%%
%%%%%%%%%%%%%%%%%%%%%%%%%%%%%%%%%%%%

\section*{Funding}

The first author was fully supported by a SECIHTI posdoctoral fellowship at the Centro de Ciencias Matem\'aticas - UNAM, and partially supported by PAPIIT-UNAM project \# IN100124. The second author was supported by ANII - Agencia Nacional de Investigaci\'on e Innovaci\'on, and PEDECIBA - Programa de Desarrollo de las Ciencias B\'asicas. 

Part of this research was carried out while the first author was visiting the Instituto de Matem\'atica y Estad\'istica ``Prof. Ing. Rafael Laguardia'' at the Universidad de la Rep\'ublica, in August of 2024 and July of 2025, and supported with funds from Programa de Despegue Cient\'ifico 2023 (PEDECIBA) and PAPIIT-UNAM IN100124. He wants to thank IMERL-UdelaR faculty and staff for their hospitality and kindness.

%%%%%%%%%%%%%%%%%%%%%%%%%%%%%%%%%%%%%
%%%%%%%%%%%%%%%%%%%%%%%%%%%%%%%%%%%%%
%%%%%%%%%%%%%%%%%%%%%%%%%%%%%%%%%%%%%
%%%%%%%%%%%%%%%%%%%%%%%%%%%%%%%%%%%%%

\bibliographystyle{plain}
\bibliography{biblio_semidualizante}

%\bibitem{EnJen00} E. Enochs and O. M. G. Jenda, \textit{Relative Homological Algebra}, De Gruyter Expositions in Mathematics no. 30, Walter De Gruyter, New York, 2000.

%\bibitem{Gill18} J. Gillespie, \textit{AC-Gorenstein rings and their stable module categories}, J. Aust. Math. Soc., (2018) pp. 1-18, DOI: 10.1017/S1446788718000290

%\bibitem{Rotman} J. Rotman, \textit{An introduction to homological algebra}, New York: Springer, 2009.

%\end{thebibliography}
\end{document}